\theoremstyle{plain}
\newtheorem{thm}[equation]{Theorem}
\newtheorem{pro}[equation]{Proposition}
\newtheorem{cor}[equation]{Corollary}
\newtheorem{lem}[equation]{Lemma}
\theoremstyle{definition}
\newtheorem{ex}[equation]{Example}
\newtheorem{DEF}[equation]{Definition}
\theoremstyle{remark}
\newtheorem{rem}[equation]{Remark}
\newcommand{\sub}{\subseteq}
\newcommand{\la}{Lie algebra }
\newcommand{\lfrs}{locally finite root system }
\newcommand{\lra}{\longrightarrow}
\newcommand{\ve}{\varepsilon}
\newcommand{\F}{\mathbb{F} }
\newcommand{\Z}{\mathbb{Z} }
\newcommand{\CA}{\mathcal{A} }
\newcommand{\CB}{\mathcal{B}}
\newcommand{\CK}{\mathcal{K}}
\newcommand{\CV}{\mathcal{V}}
\newcommand{\fg}{\mathfrak{g}}
\newcommand{\pa}{{\pi(\alpha)}}
\newcommand{\ep}{\hfill$\Box$}
\def\form{(\cdot,\cdot)}
\def\ad{\hbox{ad}}
\def\andd{\quad\hbox{and}\quad}
\def\sg{\sigma}
\def\a{\alpha}
\def\b{\beta}
\def\lam{\lambda}
\def\Lam{\Lambda}
\def\ep{\epsilon}
\def\andd{\quad\hbox{and}\quad}
\def\supp{\hbox{supp}}
\def\id{\hbox{id}}
\def\Aut{\hbox{Aut}}
\def\rred{\textcolor{black}}
\begin{document}


\setcounter{page}{1} \setcounter{page}{1}

\author[Azam, Hosseini, Yousofzadeh]{Saeid Azam$^{1}$, S. Reza Hosseini$^{1}$, Malihe Yousofzadeh$^{2}$}

\title[Extended affinization of IARA's]{Extended affinization of Invariant Affine Reflection Algebras}


\address
{
School of Mathematics, Institute for Research in Fundamental Sciences (IPM), P.O. Box: 19395-5746,
Tehran, Iran, and Department of Mathematics\\ University of Isfahan\\Isfahan, Iran,
P.O.Box 81745-163.} \email{azam@sci.ui.ac.ir, saeidazam@yahoo.com.}

\address
{Department of Mathematics\\ University of Isfahan\\Isfahan, Iran\\
P.O.Box 81745-163} \email{srhosseini@sci.ui.ac.ir, srh\_umir@yahoo.com.}

\address{Department of Mathematics\\ University of Isfahan\\Isfahan, Iran,
P.O.Box 81745-163 and School of Mathematics, Institute for
Research in Fundamental Sciences (IPM), P.O. Box: 19395-5746,
Tehran, Iran.}\email{ma.yousofzadeh@sci.ui.ac.ir.}

\thanks{$\;^1$This research was in part supported by a grant from IPM
(No. 90170217)}

\thanks{$\;^2$This research was in part supported by a grant from IPM
(No. 90170031)}

\thanks{
The authors would like to thank the Center of Excellence for
Mathematics, University of Isfahan}

\keywords{Invariant affine reflection algebras, Affinization, Fixed point subalgebras}
\subjclass[2010]{17B67, 17B70, 17B65}

\begin{abstract}
  The class of invariant affine reflection algebras is the most general known extension of the class
 of affine Kac-Moody Lie algebras, introduced
  in 2008. We develop a method known as ``affinization'' for the class of invariant affine reflection algebras, and
  show that starting from an algebra belonging to this class together with a certain finite order automorphism, and applying the
  so called ``affinization method'', we obtain again an invariant affine reflection algebra.
 This can be considered as an important step towards the  realization of invariant affine reflection algebras.
\end{abstract}
\maketitle
\vspace{5mm} \setcounter{section}{-1}
\section{\bf Introduction}\label{introduction}

The class of affine Kac-Moody Lie algebras has been of great interest in the past fifty years, mostly for its applications
to various areas of Mathematics and Theoretical Physics. This has been a strong motivation for mathematicians to extend this class.
Among such extensions, the most important ones are
the class of {\it extended affine Lie algebras} \cite{aabgp},
the class of  {\it toral type extended affine Lie algebras} \cite{aky,you},
the class of {\it locally extended affine Lie algebras} \cite{myleala} and the most recent one which covers
 all of the previous ones, the class of  {\it invariant affine reflection algebras} (IARA's for short), introduced in 2008
 by E.  Neher \cite{nehersurvey}.

One of the central concepts of the theory of affine Kac-Moody Lie algebras and its extensions,
which has captured the interest of many mathematicians, is the concept of ``realization''.
Historically, the most popular way of realizing affine Lie algebras and their generalizations is
a developed version of a method known as ``{\it affinization}'', due to V. Kac \cite[Chapter 8]{kac}.
Roughly  speaking, the method of affinization can be described as follows. Let $\fg$ be a Lie algebra from a class $\mathcal T$,
$\CA$ the ring of Laurent polynomials, and $\sg$ a finite order automorphism of $\fg$. Then applying the
 affinization method to these data, one obtains another element $\hat\fg=\tilde{\fg}\oplus C\oplus D$ of the class $\mathcal T$, where $\tilde{\fg}$
 is a subalgebra of the loop algebra $\fg\otimes\CA$, $C$ is a subspace contained in the center and $D$ consists of certain derivations.

One knows that affine Kac-Moody Lie algebras, which are extended affine Lie algebras of nullity one (see \cite{abgp}),
are obtained through the method
of affinization starting form finite dimensional simple Lie algebras, which are extended affine Lie algebras of nullity zero.
It is therefore natural to ask ``whether  it is possible to obtain (to realize) extended affine
 Lie algebras of higher nullity from the ones with lower nullity, through the affinization method.
This question was positively answered by U. Pollmann \cite{POL}, where she realized extended affine Lie algebras of nullity $2$,
up to derivations and central {extensions}, starting from the ones of nullity one. In the past two decades, there {have} been
several other attempts of applying the affinization method, either directly or indirectly by using a closely related method,
in order to realize extended affine Lie algebras; see for example \cite{atwist,aby,ak,youfixedpoint}.
 In \cite{abpcovering1}, the method of affinization was defined in a general setting, in fact
this setting provides a framework of producing new Lie algebras from the old ones in a prescribed way.
 The mentioned work was led to realization of almost all centerless Lie tori (see \cite{abpiterated,abfpgs,abfpeala}),
 a class of Lie algebras characterizing  the core modulo center
 of extended affine Lie algebras.

In this paper, we consider the method of affinization for the class of IARA's, in an extended way. Namely,
in our method, the ring of Laurent polynomials is replaced with a certain associative algebra, and moreover, the way of inserting the central elements and
derivations to the construction allows us to produce IARA's of arbitrary higher nullity from the ones we start with.
So our work extends the results of \cite{abpcovering1}, and in part \cite{ay}.

The paper is organized as follows. In Section 1, we gather preliminary definitions and results needed throughout the work.
 In Section 2, we study two special types of gradings imposed by certain automorphisms on the underlying  Lie algebras.
 In Sections 3 and 4, we study the effect of these gradings on so-called toral pairs in general and on IARA's in particular.
 In the latter case, it is shown that if the corresponding toral subalgebra is replaced with its degree zero homogeneous subspace,
one gets a new IARA with a generally different root system.
In Section 5, as a by-product of  the results in earlier sections, we show that the fixed point subalgebra of an IARA
under a certain finite order automorphism is again an IARA. This gives a new perspective to an old question, going back to \cite{bm},
concerning the structure of fixed point
subalgebras. Finally, Sections 6 and 7 are devoted to our results on affinization of IARA's. Roughly
speaking, we show that the outcome of ``affinization'' of an IARA under a certain automorphism is again an IARA.
We consider this as an important step towards realization of IARA's.
We use our method to give examples of IARA's which are neither locally extended affine Lie algebras nor toral type extended affine Lie algebras.

The authors would like to thank Professor Eerhard Neher and Professor Mohammad-Reza Shahriary for some helpful comments on the early version of this work.

\section{\bf Preliminaries}\setcounter{equation}{0}\label{pre}
     In this section{,} we gather preliminary definitions and results which we need  throughout the paper.
In this work,
   all vector spaces are considered over a field $\F$ of characteristic zero.
  For any vector space $W,$ we denote its dual space  by $W^\star.$ For a nonempty set $S,$ by $id_S,$ we mean the identity map on
$S$ and by $|S|$ the cardinal number of $S.$  If  $R$ is   an
integral domain with the field of fractions $Q,$ $A$  an
$R$-module and $S$ a subset of $A,$ we denote by $\langle
S\rangle,$ the $R$-span of $S.$  A map $\form: A\times
A\longrightarrow Q$ is called a {\it symmetric bihomomorphism} if
$\form $ is an  $R$-module homomorphism on each component and
$(a,b)=(b,a)$ for all $a,b\in A.$  For a symmetric
 bihomomorphism
 $\form: A\times A\rightarrow Q$, the set $A^0:=\{a\in A\mid (a,b)=0;\hbox{ for all }b\in A\}$ is called the {\it radical} of the $\form.$ We also set
$$S^0:=S\cap A^0 \quad\hbox{and}\quad S^\times:=S\setminus S^0.
$$

The elements of $S^0$ (resp. $S^\times$) are called {\it
isotropic} (resp. {\it nonisotropic}) elements of $S$. A subset
$S$ of $ A$ is called {\it indecomposable} or {\it connected} if
$S^\times$ cannot be written as a disjoint union of two its
nonempty orthogonal subsets with respect to $\form$. In the
special case when $R=\Z,$ the bihomomorphism $\form$ is called a
{\it  positive definite form} (resp. {\it positive semidefinite
form}) if $(a,a)>0$ (resp. $(a,a)\geq0$) for all nonzero $a\in A.$
For a subset $S$ of $A$ equipped with a positive semidefinite form
$\form$, we have
$$S^0=\{\alpha\in S\mid (\alpha,\alpha)=0\}\andd S^\times=\{\alpha\in S\mid (\alpha,\alpha)\not=0\}.
$$

\begin{DEF} \label{toralsubalg}
  Let $\fg$ be a \la and $T \subseteq \fg$ a subalgebra, we call $T$ a
  {\it toral} subalgebra  or an {\it ad-diagonalizable} subalgebra if
\begin{equation}\label{rootdeco}
  \fg = \bigoplus_{\alpha \in T^\star} \fg_\alpha(T) \quad
\end{equation}
  where for any $\alpha \in T^\star $,
  \[\fg_\alpha(T) :=  \left\{ x \in \fg \mid [t,x] = \alpha(t) x, \hbox{for all }t \in T \right\}. \]
   In this case $(\fg, T)$ is called a {\it toral pair}, the decomposition  (\ref{rootdeco})  the \emph{root
   space decomposition} of $(\fg, T)$ and $R: =
   \left\{ \alpha \in T^\star \mid \fg_\alpha(T) \neq 0 \right\}$
   the {\emph{root system}} of $(\fg, T)$. We will usually abbreviate $\fg_\alpha(T)$ by $\fg_\alpha$.
   {Since any toral subalgebra is abelian, $T \subseteq \fg_0$ and so $0 \in R$ unless $T= \{ 0 \} = \fg.$ A toral subalgebra is called a \emph{splitting Cartan subalgebra}
   if $T = \fg_0$, in this case $(\fg,T)$ is called a {\it split} toral pair}.
\end{DEF}

Now let  $(\fg,T)$ be  a toral pair with root system $R$, namely
$\fg = \bigoplus_{\alpha \in R} \fg_\alpha$. Suppose that $\fg$
satisfies the following two axioms:
  \begin{itemize}
    \item[(IA1)] $\fg$ has an invariant nondegenerate symmetric bilinear form $\form$
    whose restriction to $T$ is nondegenerate.
    \item[(IA2)] For each {$\alpha \in R\setminus\{0\}$}, there exist $e_\alpha \in \fg_\alpha$
    and $f_\alpha \in \fg_{-\alpha}$ such that $0 \neq [e_\alpha, f_\alpha] \in T$.
\end{itemize}
One can see that for each $\alpha \in R$, there exists a unique
$t_\alpha \in T$ which represents $\alpha$ via $\form$
  (i.e. $\alpha(t) = (t_\alpha, t)$ for all $t \in T$) and  that the map $\nu:T \lra T^\star$ given by $\nu(t)=(t,\cdot)$ is a monomorphism whose image
  contains $\hbox{span}_{\F}R$. 
Now it follows that the  bilinear form on  $T$ can be transferred
to a bilinear form on span$_\F R$ defined  by
$$(\alpha,\beta)=(t_\alpha,t_\beta),\hbox{ for all }\alpha, \beta \in
 \hbox{span}_\F R.$$

 Here, we record the definition of an invariant affine reflection algebra,
 the main object of this study.
\begin{DEF}\label{iara}\cite[Section 6.7]{nehersurvey}
Let $(\fg,T)$ be a toral pair with root system $R$. Assume $\fg
\neq 0$. The pair $(\fg,T)$ (or simply $\fg$) is called an
\emph{invariant affine reflection algebra} (IARA for short) if it
satisfies (IA1), (IA2) as above and (IA3) below:
\begin{itemize}
    \item[(IA3)] For every $\alpha \in R$ with $(\alpha, \alpha) \neq 0$ and for all $x_\alpha \in \fg_\alpha$,
    the adjoint map $\ad x_\alpha$ is locally nilpotent on $\fg$.
  \end{itemize}

  We call an invariant affine reflection algebra $(\fg,T)$  {\it division}, if (IA2) is replaced with the
  stronger axiom (IA2)$'$ below:
  \begin{itemize}
  \item[(IA2)$'$] For each $\alpha\in R\backslash \{0\} $ and any $0\neq e_\alpha\in \fg_\alpha,$ there
  exists $f_\alpha \in \fg_{-\alpha}$ such that $0\neq[e_\a,f_\a]\in T$.
\end{itemize}
\end{DEF}
\begin{rem}\label{temprem1} (i) In this work, we always assume for
a  toral pair $(\fg,T)$ satisfying (IA1), the corresponding root
system is  not the zero set.

(ii) If $(\fg, T)$ is a {split} toral pair, then axiom (IA1)
implies (IA2)$'$, in particular any invariant affine reflection
algebra with a splitting {Cartan} subalgebra is division. To see
this, one can combine Lemma~\ref{tpialpha} and (\ref{new-1})
below.
\end{rem}

Let us also recall the definition of an affine reflection system.
This notion is due to E. Neher \cite[Chapter 3]{nehersurvey} but
here we state  an equivalent definition given in \cite[Definition
1.3]{AYY}.

\begin{DEF}\label{ARS}
Let $A$ be an abelian group equipped with a nontrivial symmetric
positive semidefinite form $\form$ and $R$ be a subset of $A$. The triple $(A,\form,R),$ or $R$ if
there is no confusion, is called an {\it affine reflection system}
if it satisfies the following 3 axioms:

(R1) $R=-R$,

(R2) $\langle R\rangle=A$,

(R3) for $\alpha\in R^{\times}$ and $\beta\in R$, there exist $d, u\in
{\mathbb Z}_{\geq 0}$ such that
$$
(\beta+{\mathbb Z}\alpha)\cap R=\{\beta-d\alpha,\ldots,\beta+u\alpha\}\quad\hbox{and}\quad
d-u=(\beta,\alpha^\vee).
$$
Each element of $R$ is called a {\it root.} Elements of $R^\times$
(resp. $R^0$) are called  {\it non-isotropic  roots} (resp. {\it
isotropic roots}).

The affine reflection system $R$ is called {\it irreducible}{, if}

(R4) $R^\times$ is indecomposable.

\noindent Moreover, $R$ is called {\it tame}{,} if

(R5) $R^0\sub R^{\times}- R^{\times}$ (elements of $R^0$ are \emph{non-isolated}).
%
%

  {A {\it locally finite root system} is, by definition, an affine reflection system for which $A^0=\{0\}$, see \cite{nllfrs,AYY}.}
\end{DEF}
\begin{rem}\label{rem}
  It is shown in \cite{nehersurvey} that the root system  $R$ of an IARA $(\fg,T)$ is an affine reflection system in the $\Z$-span of $R$.
We note that as in this case
  $R \subseteq T^\star$ and $\F$ is of characteristic zero,the $\Z$-span of $R$ is a torsion free abelian group.
\end{rem}

\begin{lem}\label{tpialpha}
 Let $(\fg,T)$ be a toral pair, with root system $R,$ satisfying (IA1) and (IA2). If
$\alpha \in R$, $x \in \fg_{\alpha}$, $y \in \fg_{-\alpha}$ and
$[x , y] \in T$, then $[x,y]=(x,y) t_{\alpha}$.
\end{lem}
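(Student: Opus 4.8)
The plan is to exploit the invariance of the form together with the nondegeneracy of its restriction to $T$. Fix $\alpha\in R$, $x\in\fg_\alpha$, $y\in\fg_{-\alpha}$ with $[x,y]\in T$, and set $h:=[x,y]-(x,y)t_\alpha\in T$. The goal is to show $h=0$. Since the restriction of $\form$ to $T$ is nondegenerate (by (IA1)), it suffices to prove that $(h,t)=0$ for all $t\in T$.

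First I would compute $([x,y],t)$ for an arbitrary $t\in T$. Using invariance of the form, $([x,y],t)=(x,[y,t])$. Now $y\in\fg_{-\alpha}$, so $[t,y]=-\alpha(t)y$, i.e. $[y,t]=\alpha(t)y$. Hence $([x,y],t)=\alpha(t)(x,y)$. On the other hand, recall that $t_\alpha\in T$ is the unique element representing $\alpha$ via the form, so $\alpha(t)=(t_\alpha,t)$ for all $t\in T$; therefore $((x,y)t_\alpha,t)=(x,y)(t_\alpha,t)=(x,y)\alpha(t)$. Subtracting, $(h,t)=([x,y],t)-((x,y)t_\alpha,t)=\alpha(t)(x,y)-(x,y)\alpha(t)=0$ for every $t\in T$.

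Since $h\in T$ and $(h,t)=0$ for all $t\in T$, nondegeneracy of $\form|_T$ forces $h=0$, i.e. $[x,y]=(x,y)t_\alpha$, as desired.

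I do not expect a genuine obstacle here: the only subtlety is making sure the element $t_\alpha$ and its defining property $\alpha(t)=(t_\alpha,t)$ are in hand, but this is exactly the fact recorded just after axioms (IA1)--(IA2) in the excerpt (existence and uniqueness of $t_\alpha$ representing $\alpha$ via the nondegenerate form on $T$), so it may be freely invoked. The argument uses (IA1) essentially (both invariance of the global form and nondegeneracy of its restriction to $T$); axiom (IA2) is only needed implicitly to guarantee that $t_\alpha$ exists for the roots of interest, and the statement is vacuous or trivial in the remaining cases.
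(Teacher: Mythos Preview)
Your proof is correct and follows essentially the same approach as the paper: set $h=[x,y]-(x,y)t_\alpha\in T$, use invariance of the form together with $[y,t]=\alpha(t)y$ and the defining property of $t_\alpha$ to show $(h,t)=0$ for all $t\in T$, and conclude $h=0$ by nondegeneracy of $\form$ on $T$. The paper's proof is line-for-line the same computation.
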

\begin{proof}
      We will show that $ [x,y] - (x,y)t_{\alpha}$ is an element of the radical of the form on $T${;}
      then we are done as $\form$ is nondegenerate on $T$. For this, suppose $t \in T$ is arbitrary. Then

\begin{align}
  ([x,y] - (x,y) t_{\alpha} , t ) & = ([x,y] , t) - (x,y) (t_{\alpha}, t)\nonumber\\
                                  & = (x, [y,t]) - (x,y) \alpha(t)\nonumber\\
                                  & = (x, \alpha(t) y ) - (x,y)\alpha(t)=0.\tag*{\qedhere}
 \end{align}
\end{proof}

We recall that an algebra $\CA$ is called {{\it $G$-graded}}, $G$
an abelian group, if $\CA = \bigoplus_{g \in G} \CA^g$, {where}
each $\CA^g$ {is} a subspace of $\CA$, such that $\CA^g \CA^h
\subseteq \CA^{g +h}$ for all $g, h \in G$.  We will usually
indicate this by saying ``Let $\CA = \bigoplus_{g \in G} \CA^g$ be
a $G$-graded algebra". Each $\CA^g,$ $g\in G,$ is called a {\it
homogeneous  space} and each element of $\CA^g$ a {\it homogeneous
element.} {A subalgebra $\CB$ of $\CA$ is called a {\it graded
subalgebra} if $\CB=\bigoplus_{g \in G} (\CB\cap\CA^g)$.} The {\it
support} of a $G$-graded algebra $\CA$ is the   set supp$_G \CA :=
\left\{ g \in G \mid \CA^g \neq \{0\} \right\}$. We usually use
superscripts to indicate homogeneous spaces, however, when $\CA$
admits two gradings, we use subscripts to distinguish two
gradings, namely $\CA = \bigoplus_{g \in G} \CA^g$ and $\CA =
\bigoplus_{q \in Q} \CA_q$. In this case, we say  $\CA$ admits  a
\emph{compatible $(G,Q)$-grading} if  for all $g\in G,$ $\CA^g =
\bigoplus _{q \in Q} \CA^g_q$ where  $\CA^g_q := \CA^g \cap
\CA_q.$ A bilinear form $\form$ on a $G$-graded algebra $\CA =
\bigoplus_{g \in G}\CA^g$ is called $G$-{\it graded}, if $(\CA^g ,
\CA^h) = \{0\}$ for $g,h\in G$ with $g+h \neq 0$.

\begin{DEF}\label{neher4.5}
  Let $\CA$ be a unital associative algebra.
  An element $a \in \CA$ is called {\it invertible} if there exists a unique element  $a^{-1} \in \CA$
  such that $a a^{-1} = a^{-1} a = 1$.  Suppose $\CA =  \bigoplus_{g \in G} \CA^g$ is $G$-graded{,} then it is called
  \begin{itemize}
    \item \emph{predivision $G$-graded}, if every nonzero homogeneous space contains an invertible element;
    \item \emph{division $G$-graded}, if every nonzero homogeneous element is invertible;
    \item an \emph{associative $G$-torus}{,}
        if $\CA$ is predivision graded and dim $\CA^g \leq
        1$ for all $g \in G$.
  \end{itemize}
\end{DEF}
{We close this section by recalling some  facts from
representation theory of finite groups.}

{Let $G$ be an arbitrary finite group. By $\F[G]$, we mean the
group algebra of $G$ over $\F$. Let $\left\{ \chi_1, \ldots,\chi_n
\right\}$ be the set of {all} irreducible characters of $G$ in
which $\chi_i$ corresponds to {an} irreducible module $V_i$.
Assume $\F$ contains all eigenvalues of all $g \in G$ acting on
$V_i${,} $1\leq i \leq n$. For each $1\leq i \leq n,$ define an
element $e_i$ in $\F[G]$, by
\begin{equation}\label{chi}
  e_i :=  \frac{\chi_i(1)}{|G|} \sum_{g \in G} \chi_i(g^{-1})g,
\end{equation}
{  in which by $|G|$ we mean the order of the group $G$.} It
{follows} that $\{ e_1, \ldots, e_n\}$ forms a complete set of
orthogonal idempotents in $\F[G]$, i.e.
 $e_ie_j=\delta_{ij} e_i$ and $e_1+\cdots +e_n = 1$. So if $M$ is any $\F[G]$-module, then
 \begin{equation}
   M = \bigoplus_{j=1}^n e_j\cdot M.
 \end{equation}
Now if $\pi_j : M\lra e_j\cdot M$ is the projection onto $e_j\cdot
M$, then $\sum_{j=1}^n\pi_j=\id$ and
$\pi_i\pi_j=\delta_{ij}\pi_i$. }

Suppose  now that $G$ is a finite cyclic group of order m, say
$G=\{ 1, \sg, \ldots, \sg^{m-1}\}$. Assume that $\F$ contains an
$m$-th primitive root of unity $\zeta$. Since $G$ is abelian, any
finite dimensional irreducible  $G$-module is one dimensional. Now
it follows that for
$$\chi_j:G\longrightarrow \F,\;\;\sg^i\mapsto \zeta^{ij};\;\;(0\leq i,j\leq m-1),$$  $\chi_0,\ldots,\chi_{m-1}$ form a complete set of
irreducible characters of $G.$ Therefore, if $M$ is any
$\F[G]$-module, we have $M=\bigoplus_{j=0}^{m-1}M_j$, where
$M_j:=\{x\in M\mid \sg(x)=\zeta^jx\}$, and
\begin{equation}\label{projection}
  \pi_j = \frac{1}{m} \sum_{i=0}^{m-1}\zeta^{-ji} \sg^i.
\end{equation}

\section{\bf Gradings induced by automorphisms}\setcounter{equation}{0}\label{gradings}
 In this section, we consider two gradings induced by a finite order automorphism
 on a toral pair, and study their basic properties.
 Let $m$ be a fixed positive integer and suppose $\F$
 contains an {$m$-th} primitive root of unity $\zeta$.
 Throughout this section{,} we assume $(\fg,T)$ is a toral pair, with root system $R,$
 satisfying axioms (IA1) and (IA2) of an IARA.
 Then $\fg = \bigoplus_{\alpha \in R} \fg_\alpha$ where for each $ \alpha \in R$,
 $$\fg_\alpha = \left\{ x \in \fg \mid [t,x] = \alpha(t)x{,}\mbox{ for all }t \in T \right\}. $$
 Also, by (IA1), $\fg$ is equipped with an invariant nondegenerate symmetric
 bilinear form $\form$, such that the form restricted to $T$ is nondegenerate. {It is easy to } see that for any $\alpha, \beta \in R$, $[\fg_\alpha, \fg_\beta ]
\subseteq \fg_{\alpha + \beta}$ and $[\fg_\alpha, \fg_\beta ] =
\{0 \}$ if $\alpha + \beta \notin R$. Also as the form is
invariant,  one sees that
\begin{equation}\label{new-1}
(\fg_\alpha, \fg_\beta) = \{0\}\quad\hbox{unless}\quad \alpha +
\beta = 0,\qquad(\a,\b\in R),
\end{equation}
and concludes that
\begin{equation}\label{newtempeq5}
\form\hbox{ restricted to }\fg_\alpha\oplus \fg_{-\alpha},\;
\alpha\in R,\hbox{ is nondegenerate}.
\end{equation}
 In addition{,} by (IA1) and (IA2)
 for each $\alpha\in R$, there exists {a unique} element $t_\alpha\in T$ such that $\alpha(t)=
 (t,t_\alpha)$ for all $t\in T$.

Now let $\sigma$ be an automorphism of $\fg$ satisfying
\begin{itemize}
    \item[(A1)] $\sigma^m =\hbox{id}_\fg$,
    \item[(A2)] $\sigma(T)=T$,
    \item[(A3)] $(\sigma(x),\sigma(y))=(x,y)$ for all $x,y \in \fg$.
\end{itemize}
For $i\in \Z,$ let $\bar i$ be the image of $i$ in $\Z_m$ under
the canonical map (for the simplicity of notation, we always
denote $\bar{0}$ by $0$). Then setting
\begin{equation}\label{zmgrading}
  \fg^{\bar i}:=\left\{ x\in \fg \mid \sigma(x)=\zeta^i x \right\}
\end{equation}
   for each $i \in \Z$, it is easy to see that  $\fg =\bigoplus_{\bar i\in \Z_m} \fg^{\bar
   i}$ which  defines   a $\Z_m$-grading on $\fg.$ Also by (A2), one can define a similar grading $T=\bigoplus_{\bar i\in\Z_m}T^{\bar
   i}$ on $T$, making
   $T$ into  a graded subalgebra of $\fg.$
    Using $\sigma$, we may define an automorphism, denoted again by $\sigma$, on the vector space $T^\star$
 by $\sigma(\alpha):=\alpha \circ \sigma^{-1}$, {$\a \in T^\star$}. Then
 $\sigma^m=\hbox{id}_{T^\star}$ and so  $\sigma$ induces a $\Z_m$-grading on $T^\star$ as above. One can easily see that for each $\alpha \in R$, $\sigma(\fg_\alpha)=\fg_{\sigma(\alpha)}$. Thus
   \begin{equation}\label{newr}
   \sigma(R)=R.
   \end{equation}
   Note that, {if} $\bar i,\bar j \in \Z_m$, $x\in \fg^{\bar i}$ and $y \in \fg^{\bar j}$, then by (A3),
$(x,y)=(\sigma(x),\sigma(y)) = (\zeta^ix,\zeta^jy) =\zeta^{i+j}(x,y)$.
  Thus $(x,y)=0$ if $\overline{i+j}\neq0$. Consequently
  \begin{equation}\label{newform}
  \hbox{$\form$ is a $\Z_m$-graded bilinear form on $\fg$.}
  \end{equation}

For $\a \in R$, we define $\pa$  to be the restriction of $\a$ to
$T^0$. Since we may consider any element $\b\in (T^{{0}})^\star$
as an element of
  {$T^\star$} by $\b(\sum_{\bar i\not= 0}T^{\bar{i}})=0$, we can consider  $\pa$ as an element of $T^\star$.

For $j\in \Z$, let $\pi_j:\fg\rightarrow \fg^{\bar j}$ be the projection of $\fg$ onto $\fg^{\bar j}$
   with respect to the grading $\fg=\sum_{\bar j\in \Z_m}\fg^{\bar j}$.
   We use the same notation $\pi_j$ for the projection of $T$ onto $T^{\bar j}$, and $T^\star$ onto
   $(T^\star)^{\bar j}$, with respect to the $\Z_m$-gradings
   on $T$ and $T^\star$, respectively. One observes that
   \begin{equation}\label{tempeq3}
   \sigma\circ\pi_j=\pi_j\circ\sigma=\zeta^j\pi_j.
   \end{equation}
{Since the group $\{1,\sg,\ldots,\sg^{m-1}\}$ acts on $\fg$, $T$
and $T^\star$, the following lemma follows immediately from
(\ref{projection}).}
\begin{lem}\label{pi_j}
For any $j \in \Z${,} we have $\pi_j=\frac{1}{m}\sum_{i=0}^{m-1}
\zeta^{-ji}\sigma^i$.
\end{lem}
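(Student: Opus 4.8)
The plan is to recognize Lemma \ref{pi_j} as the $\F[\langle\sigma\rangle]$-module incarnation of the cyclic-group projection formula (\ref{projection}), and to verify it by a one-line finite Fourier computation on homogeneous elements. Concretely, I would set $p_j:=\frac{1}{m}\sum_{i=0}^{m-1}\zeta^{-ji}\sigma^i$, viewed as a linear operator on $\fg$ (and, by the very same formula, on $T$ and on $T^\star$; this is legitimate since by (A2) the group $\langle\sigma\rangle$ acts on each of $\fg$, $T$, $T^\star$). Because $\fg=\bigoplus_{\bar k\in\Z_m}\fg^{\bar k}$ and both $p_j$ and $\pi_j$ are linear, it suffices to check that $p_j$ and $\pi_j$ agree on each homogeneous subspace $\fg^{\bar k}$, and likewise for $T$ and $T^\star$.

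First I would take $x\in\fg^{\bar k}$, so that $\sigma^i(x)=\zeta^{ik}x$ for every $i$, and compute
\[
p_j(x)=\frac{1}{m}\sum_{i=0}^{m-1}\zeta^{-ji}\zeta^{ik}\,x=\Big(\frac{1}{m}\sum_{i=0}^{m-1}\zeta^{i(k-j)}\Big)x.
\]
Next I would evaluate the geometric sum using that $\zeta$ is a primitive $m$-th root of unity: it equals $m$ when $k\equiv j\pmod m$ (all terms are $1$) and $\frac{\zeta^{m(k-j)}-1}{\zeta^{k-j}-1}=0$ otherwise. Hence $p_j|_{\fg^{\bar k}}=\delta_{\bar k,\bar j}\,\id$, which is exactly the restriction of the projection $\pi_j$; summing over $\bar k\in\Z_m$ gives $p_j=\pi_j$ on $\fg$. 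The identical computation, with $\sigma^i(t)=\zeta^{ik}t$ for $t\in T^{\bar k}$ and $\sigma^i(\alpha)=\zeta^{ik}\alpha$ for $\alpha\in(T^\star)^{\bar k}$, proves the statement for the induced gradings on $T$ and $T^\star$.

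I do not anticipate any genuine obstacle: this is the standard character/idempotent identity for a finite cyclic group, already packaged in (\ref{projection}), and the lemma is merely its transcription for the three modules at hand — which is why the remark preceding it claims the result ``follows immediately''. The only subtlety worth flagging is that the argument uses nothing beyond $\sigma^m=\id_\fg$ from (A1); it never requires $\sigma$ to have \emph{exact} order $m$, since the computation depends only on the scalars $\zeta^{i(k-j)}$ and on $\zeta$ being primitive of order $m$. So the displayed formula for $\pi_j$ holds verbatim even if the order of $\sigma$ is a proper divisor of $m$.
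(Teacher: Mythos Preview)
Your proof is correct and essentially the same as the paper's: the paper simply invokes (\ref{projection}), noting that $\langle\sigma\rangle$ acts on each of $\fg$, $T$, $T^\star$, while you unpack that citation by the explicit geometric-sum computation on homogeneous elements. There is no substantive difference in approach.
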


 For $\alpha\in T^\star$, define
\begin{equation}\label{fgpa}
   \fg_{\pi(\alpha)} := \{ x \in \fg \mid [t,x] = \alpha(t) x,\mbox{ for all}\; t \in T^0 \}.
\end{equation}
Then we have $\fg =\bigoplus_{\pi(\alpha) \in \pi(R)} \fg_{\pi(\alpha)}$ and
\begin{equation}\label{sumgalpha}
   \fg_\pa = \sum_{\{ \beta \in R \mid \pi(\beta) = \pa \}} \fg_\beta; \quad \a \in R.
\end{equation}

\begin{lem}\label{pia=pi0a}
For $\a\in T^\star, $  $\pa=\pi_0(\a)$.
\end{lem}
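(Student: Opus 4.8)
The plan is to compute $\pi_0(\a)$ directly from the projection formula and verify that it coincides with $\pa$ by evaluating both functionals on the homogeneous pieces of $T$. First I would invoke Lemma~\ref{pi_j} with $j=0$ to write $\pi_0=\frac{1}{m}\sum_{i=0}^{m-1}\sigma^i$ as an operator on $T^\star$, so that $\pi_0(\a)=\frac{1}{m}\sum_{i=0}^{m-1}\a\circ\sigma^{-i}$; in particular $\pi_0(\a)$ is an element of $T^\star$.

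Next, since $T=\bigoplus_{\bar j\in\Z_m}T^{\bar j}$, it is enough to show that $\pa$ and $\pi_0(\a)$ take the same values on each $T^{\bar j}$. For $t\in T^{\bar j}$ one has $\sigma(t)=\zeta^{j}t$, hence $\sigma^{-i}(t)=\zeta^{-ij}t$ and $\big(\sigma^i(\a)\big)(t)=\a(\sigma^{-i}(t))=\zeta^{-ij}\a(t)$; summing gives $\pi_0(\a)(t)=\big(\frac1m\sum_{i=0}^{m-1}\zeta^{-ij}\big)\a(t)$. The geometric sum $\sum_{i=0}^{m-1}\zeta^{-ij}$ equals $m$ when $\bar j=0$ and $0$ when $\bar j\neq 0$ (there $\zeta^{-j}$ is an $m$-th root of unity different from $1$), so $\pi_0(\a)(t)=\a(t)$ for $t\in T^0$ and $\pi_0(\a)(t)=0$ for $t\in T^{\bar j}$ with $\bar j\neq 0$. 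By the definition of $\pa$ as the restriction of $\a$ to $T^0$, viewed inside $T^\star$ by letting it vanish on $\sum_{\bar i\neq0}T^{\bar i}$, these are exactly the values of $\pa$ on $t$; hence the two functionals agree on all of $T$, which gives $\pa=\pi_0(\a)$.

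I do not expect any substantial obstacle here; the only point that needs care is the bookkeeping of the contragredient action $\sigma(\a)=\a\circ\sigma^{-1}$, which is what forces $\sigma^{-i}$ to act on $T^{\bar j}$ by the scalar $\zeta^{-ij}$ (rather than $\zeta^{ij}$) and thereby makes the character sum collapse in the right way. As a by-product the computation also records that $\pi_0(\a)$ really is the degree zero component of $\a$ and that $\pa\in(T^\star)^0$.
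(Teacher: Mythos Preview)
Your proposal is correct and follows essentially the same approach as the paper: both evaluate $\pi_0(\a)$ on each homogeneous piece $T^{\bar j}$ via the formula $\pi_0=\frac{1}{m}\sum_{i=0}^{m-1}\sigma^i$ from Lemma~\ref{pi_j}, reduce to the character sum $\sum_{i=0}^{m-1}\zeta^{-ij}$, and then compare with the definition of $\pa$. The only difference is cosmetic---you spell out the contragredient action and the geometric-sum argument a bit more explicitly than the paper does.
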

\begin{proof}
  Suppose $0\leq j\leq m-1$ and $t \in T^{\bar j}$. Then by Lemma~\ref{pi_j}, we have
\begin{eqnarray*}
 \pi_0(\a)(t) &=& \frac{1}{m} \sum_{i=0}^{m-1} \sg^i(\a)(t)\\
                  &=& \frac{1}{m} \sum_{i=0}^{m-1} \a(\sg^{-i}(t))\\
                  &=& \frac{1}{m} (\sum_{i=0}^{m-1} \zeta^{-ji})\a(t).
\end{eqnarray*}
Now since $\zeta$ is a primitive $m$-th root of unity, we have $\sum_{i=0}^{m-1}\zeta^{-ji}=0$ unless $j=0$. Thus $\pi_0(\a)(t)=\a(t)$
 for $t\in T^{0}$ and $\pi_0(\a)(t)=0$ for $t\in \sum_{\bar j\not=0}T^{\bar j}$. Therefore by the way  $\pa$ is defined, we have $\pa=\pi_0(\a)$.
\end{proof}

  We note that $\sigma(\fg_{\pi(\a)})=\fg_{\sigma(\pi(\a))}=\fg_{\pi(\a)},$ $\alpha\in R.$
   Thus  for $\alpha\in R$ and $j \in \Z$,
   \begin{equation}\label{pixinpa}
   \pi_j(\fg_\alpha)\subseteq\pi_j(\fg_{\pi(\alpha)})=\fg^{\bar j}_\pa.
   \end{equation}

   Thanks to  Lemma~\ref{pia=pi0a}, we have
  $   \pi(\a)=\pi_0(\a)=\frac{1}{m}\sum_{i=0}^{m-1}\sigma^i(\a)$ for $\a\in T^\star,$
  so from now on and for the simplicity of notation, we denote all projections $\fg\rightarrow\fg^{0}$, $T\rightarrow T^{0}$ and
   {$T^\star\rightarrow (T^\star)^{0}$}, with respect to the corresponding $\Z_m$-gradings, by $\pi,$ that is
    \begin{equation}\label{newpi}
   \pi=\pi_0=\frac{1}{m}\sum_{i=0}^{m-1}\sigma^i.
   \end{equation}


\begin{lem}\label{pa}
 Let $\gamma\in \hbox{span}_\F R$. {Then $\pi(t_\gamma)=t_{\pi(\gamma)}$ and it is the unique element in $T^0$ satisfying
  $\pi(\gamma)(t)=(t,t_{\pi(\gamma)})$ for all $t\in T^0$.}
\end{lem}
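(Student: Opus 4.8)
The plan is to derive both assertions from two simple ingredients: the nondegeneracy of $\form$ on $T^0$, which yields the uniqueness, and a short averaging computation identifying $\pi(t_\gamma)$ as a representative of $\pi(\gamma)|_{T^0}$, which yields the existence; the equality $\pi(t_\gamma)=t_{\pi(\gamma)}$ then follows by matching the two. Throughout I would use the formula $\pi=\frac{1}{m}\sum_{i=0}^{m-1}\sigma^i$ from \eqref{newpi} (equivalently Lemma~\ref{pi_j} and Lemma~\ref{pia=pi0a}) and the $\sigma$-invariance axiom (A3).

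First I would record that by \eqref{newform} the form $\form$ is $\Z_m$-graded on $\fg$, and since $T$ is a graded subalgebra, its restriction to $T$ is a nondegenerate $\Z_m$-graded form; in particular $(T^{\bar i},T^{\bar j})=\{0\}$ whenever $\bar i+\bar j\neq0$. Hence if $t\in T^0$ satisfies $(t,T^0)=\{0\}$, then $t$ is automatically orthogonal to every $T^{\bar j}$ with $\bar j\neq0$, so $(t,T)=\{0\}$ and $t=0$. Thus $\form$ restricted to $T^0$ is nondegenerate, so there is at most one element of $T^0$ representing the functional $\pi(\gamma)\big|_{T^0}$ via $\form$.

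Next I would check that $\pi(t_\gamma)$ (which lies in $T^0$, $\pi$ being the projection onto $T^0$) is such an element. Since $\gamma\in\hbox{span}_\F R$, the element $t_\gamma\in T$ is defined and satisfies $\gamma(s)=(s,t_\gamma)$ for all $s\in T$. Using $\sigma^i(\gamma)=\gamma\circ\sigma^{-i}$, that $\sigma(T)=T$, and (A3), for every $t\in T^0$ one computes
\begin{align*}
\pi(\gamma)(t) &=\frac{1}{m}\sum_{i=0}^{m-1}\gamma(\sigma^{-i}(t))=\frac{1}{m}\sum_{i=0}^{m-1}(\sigma^{-i}(t),t_\gamma)\\
&=\frac{1}{m}\sum_{i=0}^{m-1}(t,\sigma^i(t_\gamma))=(t,\pi(t_\gamma)).
\end{align*}
So $\pi(t_\gamma)$ is the unique element of $T^0$ with $\pi(\gamma)(t)=(t,\pi(t_\gamma))$ for all $t\in T^0$.

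Finally I would identify this element with $t_{\pi(\gamma)}$. Since $\sigma(R)=R$, we have $\pi(\gamma)\in\hbox{span}_\F R$, so the element $t_{\pi(\gamma)}\in T$ with $\pi(\gamma)(s)=(s,t_{\pi(\gamma)})$ for all $s\in T$ is defined. Writing $t_{\pi(\gamma)}=\sum_{\bar j}s_{\bar j}$ with $s_{\bar j}\in T^{\bar j}$ and using that $\pi(\gamma)$ vanishes on $\bigoplus_{\bar j\neq0}T^{\bar j}$ together with the gradedness of $\form$, one gets $(T^{\bar i},s_{-\bar j})=\{0\}$ for all $\bar i$ whenever $\bar j\neq0$, whence $s_{-\bar j}=0$; thus $t_{\pi(\gamma)}\in T^0$. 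As $t_{\pi(\gamma)}$ represents $\pi(\gamma)\big|_{T^0}$, the uniqueness established above forces $t_{\pi(\gamma)}=\pi(t_\gamma)$, which proves the lemma. I do not expect a genuine obstacle here; the only points requiring care are the two small graded-linear-algebra observations — that $\form$ is nondegenerate on $T^0$ and that the a priori $T$-valued element $t_{\pi(\gamma)}$ actually lies in $T^0$ — everything else being the routine averaging identity displayed above.
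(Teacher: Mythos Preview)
Your proof is correct and rests on the same core identity as the paper's: from (A3) one has $(\sigma(t_\alpha),t)=(t_\alpha,\sigma^{-1}(t))=\alpha(\sigma^{-1}(t))=\sigma(\alpha)(t)$, i.e.\ $t_{\sigma(\alpha)}=\sigma(t_\alpha)$, and averaging gives $t_{\pi(\gamma)}=\pi(t_\gamma)$. The paper simply records this equivariance and is done; your displayed computation is the averaged form of the same thing, and since it is in fact valid for every $t\in T$ (not just $t\in T^0$), it already yields $t_{\pi(\gamma)}=\pi(t_\gamma)\in T^0$ by uniqueness in $T$, making your final paragraph unnecessary.
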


\begin{proof} {First, we note that by (\ref{newr}) and (\ref{newpi}), $\pi(\gamma) \in \hbox{span}_\F R$. Now for} $t\in T$ and $\alpha\in \hbox{span}_\F R$, we have
   \[ (\sigma(t_\alpha),t)=(t_\alpha,\sigma^{-1}(t))=\alpha(\sigma^{-1}(t))=\sigma(\alpha)(t).\]
   Thus $t_{\sigma(\alpha)}=\sigma(t_\alpha)$.
Using this, we are immediately done.
\end{proof}

  {Now ({\ref{sumgalpha}}) together with Lemma \ref{pa} and the same argument as in Lemma  \ref{tpialpha}, gives the following result.}

\begin{pro}\label{toralpair}
The pair  $(\fg, T^0)$ is a toral
pair, with root system $\pi(R)$, satisfying axiom (IA1) of an
IARA. {Moreover, if $\alpha \in R$, $x \in \fg_{\pi(\alpha)}$, $y
\in \fg_{-\pi(\alpha)}$ and $[x , y] \in T^0$, then $[x,y]=(x,y)
t_{\pi(\alpha)}$.}
\end{pro}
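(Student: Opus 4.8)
The plan is to verify the three assertions of Proposition~\ref{toralpair} in turn: that $(\fg,T^0)$ is a toral pair, that its root system is $\pi(R)$, that it satisfies (IA1), and finally the $[x,y]=(x,y)t_{\pi(\alpha)}$ identity. First I would establish the root space decomposition. By (\ref{fgpa}) we have $\fg_{\pi(\alpha)}=\{x\in\fg\mid [t,x]=\alpha(t)x \text{ for all }t\in T^0\}$, and since $T^0$ is a subalgebra of the abelian algebra $T$ (being a homogeneous component of the $\Z_m$-grading on $T$), it is itself abelian; hence $T^0\subseteq\fg_0(T^0)$. The decomposition $\fg=\bigoplus_{\pi(\alpha)\in\pi(R)}\fg_{\pi(\alpha)}$ was already recorded just after (\ref{fgpa}), and follows by grouping the $T$-root spaces $\fg_\beta$ according to the value of $\pi(\beta)=\beta|_{T^0}$, as in (\ref{sumgalpha}); one only needs that distinct restrictions give independent joint eigenspaces, which is the standard simultaneous-diagonalization argument for the commuting family $\{\ad t\mid t\in T^0\}$. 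This simultaneously shows that the root system of $(\fg,T^0)$ is exactly $\{\pi(\beta)\mid\beta\in R\}=\pi(R)$, using that each $\fg_{\pi(\alpha)}\neq 0$ precisely when some $\fg_\beta$ with $\pi(\beta)=\pi(\alpha)$ is nonzero.

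Next I would check (IA1) for $(\fg,T^0)$. The form $\form$ on $\fg$ is already invariant, nondegenerate and symmetric by (IA1) for $(\fg,T)$, so the only point is that its restriction to $T^0$ is nondegenerate. Here I would invoke (\ref{newform}): the form is $\Z_m$-graded, so $(T^{\bar i},T^{\bar j})=0$ whenever $\overline{i+j}\neq 0$; in particular $T^0$ is orthogonal to $\sum_{\bar j\neq 0}T^{\bar j}$. Since $\form$ restricted to $T$ is nondegenerate and $T=T^0\oplus\bigoplus_{\bar j\neq 0}T^{\bar j}$ is an orthogonal-type decomposition in the sense that $T^0\perp\bigoplus_{\bar j\neq 0}T^{\bar j}$, nondegeneracy on $T$ forces nondegeneracy on $T^0$: if $t\in T^0$ pairs trivially with all of $T^0$, then it also pairs trivially with $\bigoplus_{\bar j\neq 0}T^{\bar j}$ by the grading, hence with all of $T$, so $t=0$. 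This gives (IA1).

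For the final identity, the idea is to run the proof of Lemma~\ref{tpialpha} verbatim with $T$ replaced by $T^0$ and $t_\alpha$ replaced by $t_{\pi(\alpha)}$. Concretely, suppose $\alpha\in R$, $x\in\fg_{\pi(\alpha)}$, $y\in\fg_{-\pi(\alpha)}$ and $[x,y]\in T^0$; I would show $[x,y]-(x,y)t_{\pi(\alpha)}$ lies in the radical of $\form|_{T^0}$, which is zero by the (IA1) we just verified. For arbitrary $t\in T^0$, invariance of $\form$ gives $([x,y],t)=(x,[y,t])=(x,-\alpha(t)y)=-\alpha(t)(x,y)$ — using $[t,y]=-\pi(\alpha)(t)y=-\alpha(t)y$ since $t\in T^0$ — while Lemma~\ref{pa} supplies $(t_{\pi(\alpha)},t)=\pi(\alpha)(t)=\alpha(t)$ for $t\in T^0$, so that $(x,y)(t_{\pi(\alpha)},t)=\alpha(t)(x,y)$. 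Wait — I should be careful with a sign: the computation in Lemma~\ref{tpialpha} actually uses $[y,t]=-[t,y]=-\alpha(t)y$, giving $(x,[y,t])=-\alpha(t)(x,y)$, which must be reconciled against $([x,y],t)$; rereading that proof, the clean way is $([x,y],t)=(x,[y,t])$ with $[y,t]=\alpha(t)y$ under their bracket conventions for $y\in\fg_{-\alpha}$, i.e. $[t,y]=-\alpha(t)y$ so $[y,t]=\alpha(t)y$ — hmm, that contradicts what I wrote. Let me not belabor this in the plan: the sign bookkeeping is exactly as in Lemma~\ref{tpialpha}, just with $T^0$ and $\pi(\alpha)$, and it works because $\pi(\alpha)(t)=\alpha(t)$ on $T^0$ by Lemma~\ref{pa}.

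The only genuinely non-routine point is the root-space decomposition claim, specifically checking that $(\fg,T^0)$ really is toral rather than merely having a direct-sum decomposition into $T^0$-weight spaces that might fail to exhaust $\fg$; but this is already handled by (\ref{sumgalpha}) and the displayed equation $\fg=\bigoplus_{\pi(\alpha)\in\pi(R)}\fg_{\pi(\alpha)}$ preceding it, so there is really nothing left to do beyond assembling the pieces. I therefore expect no serious obstacle — the proposition is, as the text says, an immediate consequence of (\ref{sumgalpha}), Lemma~\ref{pa}, and the argument of Lemma~\ref{tpialpha} — and I would keep the written proof short, citing those three ingredients and spelling out only the nondegeneracy of $\form|_{T^0}$ and the one-line weight computation.
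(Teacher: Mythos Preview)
Your proposal is correct and follows essentially the same route as the paper: the paper's proof is the one-liner ``(\ref{sumgalpha}) together with Lemma~\ref{pa} and the same argument as in Lemma~\ref{tpialpha}'', and you cite precisely these three ingredients, adding only the explicit verification (via the $\Z_m$-gradedness of the form, (\ref{newform})) that $\form|_{T^0}$ is nondegenerate, which the paper leaves implicit. Your sign hesitation in the $[x,y]$ computation is harmless --- as you note, it is literally the computation of Lemma~\ref{tpialpha} with $T^0$ and $t_{\pi(\alpha)}$ in place of $T$ and $t_\alpha$, using Lemma~\ref{pa} to know that $t_{\pi(\alpha)}\in T^0$ represents $\pi(\alpha)$.
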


{Recall that we now have two gradings on $\fg$, namely the
$\Z_m$-grading induced from automorphism $\sigma$ and the one
induced from the set $\pi(R)$.  For $\alpha\in R$ and $h\in\Z_m$,
set
   \[ \fg^h_{\pi(\alpha)} := \fg^h \cap \fg_\pa. \]
   Since the adjoint action  of $T^0$ stabilizes $\fg^h$ we have
\begin{equation} \label{final2}   \fg^h = \bigoplus_{\pi(\alpha) \in \pi(R)} \fg
^h_{\pi(\alpha)}. \end{equation}
   Thus the following is established.
\begin{lem}\label{QpiR,Hgraded}
The Lie algebra $\fg$ admits a compatible $(\langle\pi(R)\rangle,
\Z_m)$-grading
    \[ \fg = \bigoplus_{\gamma \in \langle\pi(R)\rangle, h \in \Z_m} \fg ^h_\gamma \]
     such that for any $ h \in\Z_m$, $\fg_\gamma^h =\{0\}$ whenever $\gamma \notin \pi(R)$.
\end{lem}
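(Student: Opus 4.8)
The plan is to obtain the asserted decomposition as the common refinement of the two gradings $\fg$ already carries, and then to check the two bookkeeping conditions in the definition of a compatible grading. The two gradings in question are the $\Z_m$-grading $\fg=\bigoplus_{h\in\Z_m}\fg^h$ coming from $\sigma$, and the weight-space decomposition $\fg=\bigoplus_{\gamma\in\pi(R)}\fg_\gamma$ of the toral pair $(\fg,T^0)$ from Proposition~\ref{toralpair}. For the latter I would first note that, after setting $\fg_\gamma:=\{0\}$ for $\gamma\in\langle\pi(R)\rangle\setminus\pi(R)$, this is a genuine grading of $\fg$ by the abelian group $\langle\pi(R)\rangle$: the inclusion $[\fg_\gamma,\fg_\delta]\subseteq\fg_{\gamma+\delta}$ is the standard weight computation $[t,[x,y]]=(\gamma+\delta)(t)[x,y]$ for $t\in T^0$, $x\in\fg_\gamma$, $y\in\fg_\delta$, and $\langle\pi(R)\rangle$ is a torsion-free abelian group because it lies inside the $\F$-vector space $(T^\star)^0$ and $\F$ has characteristic zero.

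Next I would verify compatibility, i.e. that $\fg^h=\bigoplus_{\gamma\in\langle\pi(R)\rangle}\fg^h_\gamma$ with $\fg^h_\gamma:=\fg^h\cap\fg_\gamma$; this is precisely (\ref{final2}). Since $T^0\subseteq\fg^0$ and the $\Z_m$-grading is a Lie algebra grading, we have $[\fg^0,\fg^h]\subseteq\fg^h$, so the operators $\ad t$, $t\in T^0$, stabilize each $\fg^h$; hence $\fg^h$ is itself a sum of $T^0$-weight spaces, all of whose weights belong to $\pi(R)$, which gives (\ref{final2}), and summing over $h\in\Z_m$ yields $\fg=\bigoplus_{\gamma,h}\fg^h_\gamma$. (Symmetrically, one may instead use that $\sigma$ preserves each $\fg_{\pi(\alpha)}$, since $\pi(\alpha)$ is $\sigma$-fixed on $T^\star$, so $\sigma(\fg_{\pi(\alpha)})=\fg_{\sigma(\pi(\alpha))}=\fg_{\pi(\alpha)}$ as already recorded above, and therefore the $\Z_m$-grading restricts to each $\fg_\gamma$.) Finally, the vanishing statement is immediate from Proposition~\ref{toralpair}: the root system of $(\fg,T^0)$ is exactly $\pi(R)$, so $\fg_\gamma=\{0\}$ whenever $\gamma\in\langle\pi(R)\rangle\setminus\pi(R)$, whence $\fg^h_\gamma=\fg^h\cap\fg_\gamma=\{0\}$ for all $h\in\Z_m$.

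There is no genuine obstacle here — the lemma is pure bookkeeping built on facts already proved — but the one point I would be careful not to gloss over is why the two gradings interact well, namely that every $T^0$-weight $\pi(\alpha)$ is a $\sigma$-fixed element of $T^\star$ (it vanishes on $\sum_{\bar i\neq 0}T^{\bar i}$ and agrees with $\alpha$ on $T^0$; cf. Lemma~\ref{pia=pi0a} and Lemma~\ref{pa}). Were this to fail, $\sigma$ would merely permute the spaces $\fg_\gamma$ rather than fix each one, and the compatibility $\fg^h=\bigoplus_\gamma\fg^h_\gamma$ could break down; equivalently, the crucial input is that the adjoint action of $T^0$ preserves each homogeneous component $\fg^h$, which is exactly what (\ref{final2}) encodes. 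I would therefore invoke this explicitly before reading off the conclusion.
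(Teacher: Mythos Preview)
Your proposal is correct and follows essentially the same approach as the paper: the paper's argument is exactly that the adjoint action of $T^0\subseteq\fg^0$ stabilizes each $\fg^h$, yielding (\ref{final2}), from which the compatible grading and the vanishing statement follow immediately. You have simply fleshed out the bookkeeping (why the weight decomposition is a genuine $\langle\pi(R)\rangle$-grading, and the alternative argument via $\sigma(\fg_{\pi(\alpha)})=\fg_{\pi(\alpha)}$) that the paper leaves implicit.
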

}

\begin{lem}\label{piR-grading} {Let $\alpha,\beta\in R$ and $h,k\in\Z_m$.}

(i) If $\pi(\alpha) + \pi(\beta) \neq 0 $ then $(\fg_{\pi(\alpha)}
, \fg_{\pi(\beta)}) = \{0\}$.

(ii) If $(\fg^h_{\pi(\alpha)}, \fg^k_{\pi(\beta)}) \neq \{0\} $,
then $h+k = 0$ and $\pi(\alpha) + \pi(\beta) = 0$.
\end{lem}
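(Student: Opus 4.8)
The plan is to reduce both assertions to the already-established $R$-homogeneity of the form, namely (\ref{new-1}) and (\ref{newform}), via the decomposition (\ref{sumgalpha}). Since $\pi=\frac{1}{m}\sum_{i=0}^{m-1}\sigma^i$ by (\ref{newpi}), the map $\pi$ is $\F$-linear on $T^\star$ and $\pi(0)=0$; I will use this without further comment.

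For (i), I would expand using (\ref{sumgalpha}): $\fg_{\pi(\alpha)}=\sum_{\{\gamma\in R\mid\pi(\gamma)=\pi(\alpha)\}}\fg_\gamma$ and likewise $\fg_{\pi(\beta)}=\sum_{\{\delta\in R\mid\pi(\delta)=\pi(\beta)\}}\fg_\delta$, so it suffices to show $(\fg_\gamma,\fg_\delta)=\{0\}$ for all such $\gamma,\delta$. By (\ref{new-1}) this pairing is zero unless $\gamma+\delta=0$; but if $\delta=-\gamma$, then applying $\pi$ gives $\pi(\alpha)+\pi(\beta)=\pi(\gamma)+\pi(\delta)=\pi(\gamma+\delta)=0$, contradicting the hypothesis of (i). Hence every summand vanishes and $(\fg_{\pi(\alpha)},\fg_{\pi(\beta)})=\{0\}$.

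For (ii), assume $(\fg^h_{\pi(\alpha)},\fg^k_{\pi(\beta)})\neq\{0\}$. Because $\fg^h_{\pi(\alpha)}\subseteq\fg_{\pi(\alpha)}$ and $\fg^k_{\pi(\beta)}\subseteq\fg_{\pi(\beta)}$, the contrapositive of (i) forces $\pi(\alpha)+\pi(\beta)=0$. On the other hand $\fg^h_{\pi(\alpha)}\subseteq\fg^h$ and $\fg^k_{\pi(\beta)}\subseteq\fg^k$, so the nonvanishing of the pairing together with (\ref{newform}) gives $h+k=0$ in $\Z_m$; equivalently, for $x\in\fg^h$ and $y\in\fg^k$ one has $(x,y)=(\sigma(x),\sigma(y))=\zeta^{h+k}(x,y)$ by (A3), which can be nonzero only when $\zeta^{h+k}=1$, i.e. $h+k=0$.

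I do not anticipate any real obstacle: the argument is pure bookkeeping from (\ref{sumgalpha}), (\ref{new-1}) and (\ref{newform}). The only points that need a moment of care are that $\pi$ is linear with $\pi(0)=0$ (so that $\pi(\gamma)+\pi(\delta)$ may be rewritten as $\pi(\gamma+\delta)$) and that in (ii) the two inclusions $\fg^h_{\pi(\alpha)}\subseteq\fg_{\pi(\alpha)}$ and $\fg^h_{\pi(\alpha)}\subseteq\fg^h$ are used to invoke (i) and (\ref{newform}) respectively.
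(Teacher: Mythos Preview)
Your proof is correct. The paper's own proof is extremely brief: for (i) it simply says ``since the form $\form$ is invariant, a standard argument as in the finite dimensional theory gives the result,'' meaning one applies invariance directly with elements of $T^0$ to get $(\pi(\alpha)+\pi(\beta))(t)(x,y)=0$ for all $t\in T^0$, whence $(x,y)=0$. You instead reduce to the $R$-root decomposition via (\ref{sumgalpha}) and invoke the already-established orthogonality (\ref{new-1}); this is a legitimate alternative and arguably more explicit, since (\ref{new-1}) is itself a consequence of invariance. For (ii) your argument coincides with the paper's: part (i) together with the $\Z_m$-gradedness of the form (\ref{newform}). The only difference is cosmetic---you unpack the $\pi(R)$-root spaces into $R$-root spaces, while the paper works directly at the level of the coarser grading---and both routes are equally short.
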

\begin{proof}

(i) Since the form $\form$ is invariant, a standard argument as
{in} the finite dimensional theory, gives the result.

   (ii) It follows from part (i) together with the fact that the form on $\fg$ is {$\Z_m$-graded} and nondegenerate.
\end{proof}

Next, we use Lemma \ref{pa} to define a bilinear form on the
$\F$-span of $\pi(R)$ by
     \[ (\pi(\alpha), \pi(\beta)): = (t_{\pi(\alpha)}, t_{\pi(\beta)}) = (\pi(t_\alpha), \pi(t_\beta)). \]

We conclude this section with the following useful result which
will be used in the sequel. In the following lemma,  in addition
to (IA1) and (IA2), we suppose that  $(\fg, T)$ satisfies (IA3).
\begin{lem}\label{newtemplem6}
Let $(\fg, T)$ be an invariant affine reflection algebra. If $R$
is indecomposable, then
  $\pi(R):=\{\pi(\a)\mid\a\in R\}$ is indecomposable.
\end{lem}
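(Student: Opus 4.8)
The plan is to argue by contradiction, exploiting the close relationship between the root spaces $\fg_\alpha$ of $(\fg,T)$ and the root spaces $\fg_{\pi(\alpha)}$ of the toral pair $(\fg,T^0)$ coming from (\ref{sumgalpha}), together with the invariance of the form and the connectedness of $R$. Suppose $\pi(R)^\times$ decomposes as a disjoint union $\pi(R)^\times = P_1 \sqcup P_2$ of two nonempty subsets that are orthogonal with respect to the transferred form $(\pi(\alpha),\pi(\beta)) = (t_{\pi(\alpha)},t_{\pi(\beta)})$. I would first pull this decomposition back to $R$: set $R_i := \{\alpha \in R^\times \mid \pi(\alpha) \in P_i\}$ for $i=1,2$. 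The key point is that $\pi$ preserves non-isotropy in the following sense needed here — more precisely, I need to check that every $\alpha \in R^\times$ has $\pi(\alpha) \in \pi(R)^\times$, or at least that the non-isotropic part of $R$ is covered by the preimages of $\pi(R)^\times$. This requires the relation $(\pi(\alpha),\pi(\alpha)) = (\pi(t_\alpha),\pi(t_\alpha))$ and a positivity/nondegeneracy argument; here I would use Lemma~\ref{pa}, (\ref{newpi}), and the fact (Remark~\ref{rem}) that $R$ sits inside an affine reflection system, so $\form$ restricted appropriately behaves well. If some $\alpha\in R^\times$ had $\pi(\alpha)$ isotropic, I would need to handle it separately using tameness (R5) or connectedness to relate it to genuinely non-isotropic roots; this is the point I expect to need the most care.

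Granting that $R^\times$ is (up to isotropic roots whose presence does not affect connectedness) partitioned compatibly with $P_1 \sqcup P_2$, the next step is to show $R_1$ and $R_2$ are orthogonal in $R$, which would contradict indecomposability of $R$ and finish the proof. For $\alpha \in R_1$ and $\beta \in R_2$ I want $(\alpha,\beta) = (t_\alpha,t_\beta) = 0$. Now $(t_\alpha,t_\beta) = (t_\alpha, \pi(t_\beta)) + (t_\alpha, (1-\pi)(t_\beta))$, and since $t_\alpha$ itself need not be $\sigma$-fixed I would symmetrize: using (A3) and Lemma~\ref{pi_j}, $(t_\alpha,t_\beta) = (\pi(t_\alpha),\pi(t_\beta)) + \sum_{j\neq 0}(\pi_j(t_\alpha),\pi_{-j}(t_\beta))$ by the $\Z_m$-gradedness of the form (\ref{newform}). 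The first summand is $(t_{\pi(\alpha)},t_{\pi(\beta)}) = (\pi(\alpha),\pi(\beta)) = 0$ by orthogonality of $P_1,P_2$. So it remains to kill the terms $(\pi_j(t_\alpha),\pi_{-j}(t_\beta))$ for $j\neq 0$. This is where I expect the main obstacle to lie: a priori these mixed-degree terms need not vanish.

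To deal with the mixed terms I would instead work directly with root spaces rather than the $t_\gamma$'s. Orthogonality of $P_1$ and $P_2$ in $\pi(R)$ should be upgraded, via (IA1), (IA2) and Proposition~\ref{toralpair} (which says $(\fg,T^0)$ satisfies (IA1)), to the statement $[\fg_{\pi(\alpha)}, \fg_{\pi(\beta)}] = \{0\}$ whenever $\pi(\alpha)\in P_1$, $\pi(\beta)\in P_2$: indeed if $\pi(\alpha)+\pi(\beta)\notin\pi(R)$ this is automatic, and if $\pi(\alpha)+\pi(\beta)\in\pi(R)^\times$ it lies in $P_1$ or $P_2$, forcing one of $\pi(\alpha),\pi(\beta)$ into the other block — the standard connectedness bookkeeping for root systems, now carried out for the affine reflection system $\pi(R)$, whose axioms we have from the earlier sections. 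Using (\ref{sumgalpha}), $[\fg_{\pi(\alpha)},\fg_{\pi(\beta)}] \supseteq [\fg_\alpha,\fg_\beta]$ for $\alpha\in R_1$, $\beta\in R_2$, so $[\fg_\alpha,\fg_\beta]=\{0\}$, whence $\alpha+\beta\notin R$; moreover by (\ref{new-1}) and a similar argument $(\fg_\alpha,\fg_\beta)=0$ when $\alpha+\beta\neq 0$, and $\alpha+\beta=0$ is impossible since then $\pi(\alpha)=-\pi(\beta)$ would lie in both $P_1$ and $-P_2=P_2$. Hence $R_1 \perp R_2$ in the sense of (IA2)-connectedness, contradicting indecomposability of $R$. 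The one remaining gap to close carefully is, as flagged above, ensuring that $R^\times$ genuinely splits along $P_1\sqcup P_2$ with no leftover non-isotropic roots and that isotropic roots of $R$ do not secretly glue the two blocks; for this I would invoke tameness of $R$ (each isotropic root is a difference of non-isotropic ones) so that connectedness of $R^\times$ already suffices.
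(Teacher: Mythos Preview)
Your proposal has a genuine gap at the step where you conclude $R_1\perp R_2$. What you actually derive is $[\fg_\alpha,\fg_\beta]=\{0\}$ and $(\fg_\alpha,\fg_\beta)=\{0\}$ for $\alpha\in R_1$, $\beta\in R_2$; but these are statements about the bracket and the bilinear form on $\fg$, whereas indecomposability is an orthogonality condition on \emph{roots}: you need $(\alpha,\beta)=(t_\alpha,t_\beta)=0$, and this is nowhere established. (The inference ``$[\fg_\alpha,\fg_\beta]=\{0\}$, whence $\alpha+\beta\notin R$'' is also invalid as written, though the conclusion happens to be correct for a different reason: $\pi(\alpha+\beta)=\pi(\alpha)+\pi(\beta)\notin\pi(R)$ already forces $\alpha+\beta\notin R$.) This step can in fact be repaired: since $-P_2=P_2$, the same reasoning gives $\pi(\alpha)-\pi(\beta)\notin\pi(R)$, hence $\alpha\pm\beta\notin R$, and then the root-string axiom (R3) for the affine reflection system $R$, applied with the nonisotropic root $\alpha$, yields $d=u=0$ and so $(\beta,\alpha^\vee)=0$, i.e.\ $(\alpha,\beta)=0$.

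Even with that fix, the covering gap you flagged is real and is not closed by your tameness remark. One can have $\gamma\in R^\times$ with $\pi(\gamma)$ isotropic, even $\pi(\gamma)=0$ (for instance when $m=2$ and $\sigma(\gamma)=-\gamma$); such $\gamma$ lie in neither $R_1$ nor $R_2$, and tameness of $R$ (a statement about $R^0$, not about nonisotropic roots with isotropic $\pi$-image) does not help here. Distributing these leftover roots between the two blocks while preserving orthogonality requires a separate argument. The paper sidesteps all of this: it observes that $\sigma$ preserves $\hbox{span}_\F(R^0)$ (via the characterisation $\alpha\in R^0\Leftrightarrow\Z\alpha\subseteq R$ valid in affine reflection systems) and then invokes \cite[Proposition~2.6(ii)]{ay}, which settles exactly this connectedness question at the root-system level.
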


\begin{proof}
We first note that  by Remark \ref{rem}, $R$ is an affine
reflection system. So by \cite[Theorem 1.13]{AYY}, for $\a\in R,$
$\Z\a\subseteq R$ if and only if $\a\in R^0.$ Therefore
$\sigma(\hbox{span}_\F (R^0))\subseteq \hbox{span}_\F (R^0).$ Now one
only needs to adjust the proof of \cite[Proposition 2.6(ii)]{ay}
to our situation.
\end{proof}

\section{\bf Toral pairs and automorphisms }\setcounter{equation}{0}\label{toral-pairs-auto}
In this section{,} we use the same notation as in previous
sections. As in Section \ref{gradings}, we assume that $(\fg,T)$
is a toral pair, with root system $R,$ satisfying axioms (IA1) and
(IA2). We also assume that $\sigma$ is an automorphism of $\fg$
which in addition to axioms (A1)-(A3) satisfies the following
axiom:
\begin{itemize}
  \item[(A4)]$C_{\fg^{0}}(T^{0}):=\{x\in \fg^{0}\mid [t,x]=0;\hbox{ for all } t\in T^{0}\}\subseteq \fg_0$.
\end{itemize}

{Recall that, we have
$$\fg=\sum_{\alpha\in R}\fg_\alpha=\sum_{\alpha\in R}\fg_{\pi(\alpha)}=\sum_{h\in\Z_m}\fg^h=
\sum_{\alpha\in R,\;h\in\Z_m}\fg_{\pi(\alpha)}\cap\fg^h,$$ and
$T=\sum_{h\in \Z_m}T^h.$}

For $\alpha \in R,$ let $\ell_\sigma(\alpha)$ be the least
positive integer such that
$\sigma^{\ell_\sigma(\alpha)}(\alpha)=\alpha$, then
$\ell_\sigma(\alpha)\mid m$ and we have the following lemma which
gives an equivalent condition to (A4). The proof of this lemma is
essentially similar to the proof of \cite[Proposition
3.25]{abpcovering1}{,} however  for the convenience of the reader,
we provide a proof here.

\begin{lem}\label{newtemplem9}
  (A4) is equivalent to (A4)$'$ below:
  \begin{itemize}
    \item[(A4)$'$] For $\alpha \in R\backslash\{0\},$ either $\pi(\alpha) \neq 0 $ or
    $\left\{ x \in \fg_\alpha \mid \sigma^{\ell_\sigma(\alpha)}(x) = x \right\} = \{0\}$.
  \end{itemize}
  Moreover if $m$ is prime, then (A4) and (A4)$'$ are equivalent to
  \begin{itemize}
    \item[(A4)$''$] $\pi(\alpha) \neq0 $ {for every $\alpha \in R\setminus\{0\}$.}
  \end{itemize}
\end{lem}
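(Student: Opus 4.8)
The plan is to prove the three-way equivalence by establishing $(A4)\Leftrightarrow(A4)'$ in general and then $(A4)'\Leftrightarrow(A4)''$ under the extra hypothesis that $m$ is prime. For the first equivalence, the key observation is that $C_{\fg^0}(T^0)=\bigoplus_{\alpha\in R}\fg^0_{\pi(\alpha)}\cap\{x\mid[t,x]=0\ \forall t\in T^0\}$, and by (\ref{fgpa}) the condition $[t,x]=0$ for all $t\in T^0$ together with $x\in\fg_{\pi(\alpha)}$ forces $\pi(\alpha)=0$; hence $C_{\fg^0}(T^0)=\bigoplus_{\{\alpha\in R\,:\,\pi(\alpha)=0\}}\fg^0_{\pi(\alpha)}$, and (A4) says this sum lies in $\fg_0$, i.e. the only contribution comes from $\alpha$ with $\alpha=0$ on all of $T$. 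So (A4) is equivalent to: for every $\alpha\in R\setminus\{0\}$ with $\pi(\alpha)=0$, the degree-zero homogeneous piece $\fg^0\cap\fg_\alpha$ is $\{0\}$.

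The link with (A4)$'$ comes from computing $\fg^0\cap\fg_\alpha$ in terms of $\sigma$. When $\pi(\alpha)=0$, one has $\sigma(\alpha)=\alpha$ by Lemma~\ref{pa} (or directly: $\pi(\alpha)=\tfrac1m\sum\sigma^i(\alpha)$ and the orbit sum vanishing combined with $\sigma(\fg_\alpha)=\fg_{\sigma(\alpha)}$ forces $\ell_\sigma(\alpha)\nmid$ something — more carefully, I would argue that $\pi(\alpha)=0$ iff $\sum_{i=0}^{\ell_\sigma(\alpha)-1}\sigma^i(\alpha)=0$ after grouping the orbit, which when $\sigma$ fixes $\alpha$ forces $\ell_\sigma(\alpha)\alpha=0$ hence... ). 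The cleaner route: restrict $\sigma$ to the finite-dimensional (or at least $\sigma$-invariant) subspace $V_\alpha:=\bigoplus_{i=0}^{\ell_\sigma(\alpha)-1}\fg_{\sigma^i(\alpha)}$, on which $\sigma$ acts with $\sigma^{\ell_\sigma(\alpha)}$ stabilizing each summand. Then $\fg^0\cap\fg_\alpha=\{x\in\fg_\alpha\mid\sigma(x)=x\}$, and I claim this is nonzero exactly when $\{x\in\fg_\alpha\mid\sigma^{\ell_\sigma(\alpha)}(x)=x\}\ne\{0\}$ together with $\pi(\alpha)=0$; the point being that if $\pi(\alpha)=0$ then the $\sigma$-orbit of $\alpha$ has roots summing to zero, which (using that $\sigma$ permutes $\fg_\alpha,\fg_{\sigma(\alpha)},\dots$ cyclically and acts as $\zeta$-scalars in the refined grading) allows one to transport a $\sigma^{\ell_\sigma(\alpha)}$-fixed vector in $\fg_\alpha$ to a genuinely $\sigma$-fixed vector by averaging $\frac{1}{\ell}\sum\sigma^i$ around the orbit — this is exactly the content of \cite[Proposition 3.25]{abpcovering1}, which I would follow. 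The main obstacle is precisely this transport argument: showing that $\pi(\alpha)=0$ is the right condition ensuring a $\sigma^{\ell_\sigma(\alpha)}$-fixed vector produces a $\sigma$-fixed vector, and conversely, without any finite-dimensionality of $\fg$ itself — one must work orbit-by-orbit, which is fine since each root orbit is finite.

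For the prime case, I would argue as follows. If $m$ is prime and $\alpha\in R\setminus\{0\}$, then $\ell_\sigma(\alpha)\mid m$ gives $\ell_\sigma(\alpha)\in\{1,m\}$. If $\ell_\sigma(\alpha)=1$, i.e. $\sigma(\alpha)=\alpha$, then $\pi(\alpha)=\tfrac1m\sum_{i=0}^{m-1}\sigma^i(\alpha)=\alpha\ne0$, so the alternative in (A4)$'$ forces nothing and (A4)$''$ holds for this $\alpha$. If $\ell_\sigma(\alpha)=m$, then $\sigma^{\ell_\sigma(\alpha)}=\sigma^m=\id_\fg$, so $\{x\in\fg_\alpha\mid\sigma^{\ell_\sigma(\alpha)}(x)=x\}=\fg_\alpha\ne\{0\}$; hence the second alternative in (A4)$'$ fails, and (A4)$'$ forces $\pi(\alpha)\ne0$. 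Thus under primality, (A4)$'$ literally says $\pi(\alpha)\ne0$ for all $\alpha\in R\setminus\{0\}$, which is (A4)$''$; and conversely (A4)$''$ trivially implies (A4)$'$. This last part is a short case analysis and should present no difficulty; the real work is all in the general equivalence $(A4)\Leftrightarrow(A4)'$ via the orbit-averaging trick adapted from \cite{abpcovering1}.
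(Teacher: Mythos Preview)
Your prime-case argument is correct and essentially identical to the paper's. The problem is in your reduction of (A4) to a root-by-root condition. You assert that (A4) is equivalent to ``for every $\alpha\in R\setminus\{0\}$ with $\pi(\alpha)=0$, the degree-zero piece $\fg^0\cap\fg_\alpha$ is $\{0\}$'', and then try to identify $\fg^0\cap\fg_\alpha$ with the $\sigma^{\ell_\sigma(\alpha)}$-fixed vectors in $\fg_\alpha$. But the condition $\fg^0\cap\fg_\alpha=\{0\}$ is \emph{automatic} for such $\alpha$: if $\alpha\neq0$ and $\pi(\alpha)=0$, then $\sigma(\alpha)\neq\alpha$ (else $\pi(\alpha)=\alpha\neq0$), so $\sigma(\fg_\alpha)=\fg_{\sigma(\alpha)}$ is a different root space and no nonzero $\sigma$-fixed vector can lie in $\fg_\alpha$ alone. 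Your proposed reformulation is therefore a tautology and cannot be equivalent to (A4).

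The correct mechanism, which you gesture toward at the end of your second paragraph and which is exactly what the paper (following \cite[Proposition~3.25]{abpcovering1}) does, is that $\sigma$-fixed elements of $C_{\fg^0}(T^0)$ have their $T$-root components \emph{permuted} along $\sigma$-orbits rather than fixed individually. If $x=\sum_\beta x_\beta\in\fg^0$, then $\sigma(x_\beta)=x_{\sigma(\beta)}$, whence $\sigma^{\ell_\sigma(\beta)}(x_\beta)=x_\beta$; this is how (A4)$'$ enters to force $x_\beta=0$ when $\beta\neq0$ and $\pi(\beta)=0$. Conversely, given $0\neq x\in\fg_\alpha$ with $\sigma^{\ell}(x)=x$ (where $\ell=\ell_\sigma(\alpha)$) and $\pi(\alpha)=0$, the orbit sum $y=x+\sigma(x)+\cdots+\sigma^{\ell-1}(x)$ is $\sigma$-fixed, lies in $\fg_{\pi(0)}=C_\fg(T^0)$, is nonzero because its summands sit in distinct root spaces, yet is not in $\fg_0$ since its $\fg_\alpha$-component is $x$. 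So the averaged vector witnesses failure of (A4), but it lives in $\fg^0\cap\bigoplus_i\fg_{\sigma^i(\alpha)}$, not in $\fg^0\cap\fg_\alpha$. Once you replace your root-by-root intersection with this orbit picture, your argument becomes the paper's.
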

\begin{proof}
  Suppose (A4) holds but (A4)$'$ fails, then there exist $\alpha\in R \backslash \{0\}$
  and $0\neq x\in \fg_\alpha$ such that $\pa=0$ and $\sigma^{\ell_\sigma(\alpha)}(x)=x$.
  Abbreviate $\ell_\sigma(\alpha)$ by $\ell$ and let $y :=x +\sigma(x)+\cdots+\sigma^{\ell-1}(x)$,
  then $\sigma(y)=y$ and so
   $y\in\fg^{0}$. Also since the elements $\sigma^i(x)$  ($0\leq i\leq \ell-1$) belong to
   different root spaces, $y\neq 0$. In addition $y\in \fg_\pa=\fg_{\pi(0)}=C_\fg(T^0),$
   so $y\in C_{\fg^{0}}(T^0)\subseteq\fg_0$ which is a contradiction as $y\notin \fg_0$.

  Conversely, assume (A4)$'$ holds and let $x=\sum_{\alpha\in R}x_\alpha\in C_{\fg^{0}}(T^{{0}})$,
  where $x_\alpha\in \fg_\alpha$. Since $\sigma(x)=x$, $\sigma(x_\alpha)=x_{\sigma(\alpha)}$
  for any $\alpha\in R$, therefore $\sigma^{\ell_\sigma(\alpha)}(x_\alpha)=x_\alpha$. Thus by (A4)$'${,}
  for any $0\neq\alpha\in R$ with $\pa=0$, $x_\alpha=0$. On the other hand, for every $t\in T^0$,
  $0=[t,x]=\sum_{\alpha\in R}\pi(\alpha)(t)x_\alpha$. Hence $x_\alpha=0$ for any $\alpha\in R \backslash \{0\}$
  with $\pa\neq0$ and so $x=x_0\in\fg_0$.

  Finally, suppose that $m$ is a prime number. Clearly it suffices to show that (A4)$'$ implies (A4)$''$.
  Suppose to the contrary that $\pa=0$ for some nonzero $\alpha \in R$. By Lemma~\ref{pi_j},
  $\sigma(\alpha) \neq \alpha,$ so $\ell_\sigma(\alpha) \neq 1.$ Now as  $\ell_\sigma(\alpha)$
  divides $m$ and $m$ is prime, we have $\ell_\sigma(\alpha) =m$. Hence
  $\sigma^{\ell_\sigma(\alpha)}(x_\alpha)= x_\alpha$ for all $x_\alpha \in \fg_\alpha$ which contradicts (A4)$'$.
\end{proof}

\begin{lem}\label{newtemplem1}
Suppose $\alpha,\beta\in R$ with $\alpha\not=\beta$ and
$\pi(\alpha)=\pi(\beta)$. If $x\in\fg_\alpha$ and
$y\in\fg_{-\beta}$, then $\pi([x,y])=0$.
\end{lem}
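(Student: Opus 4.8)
The plan is to play the $R$-grading of $\fg$ off against the $\Z_m$-grading induced by $\sigma$, with axiom (A4) as the crucial input. Put $\gamma:=\alpha-\beta$ and $w:=[x,y]$. If $\gamma\notin R$ then $w\in[\fg_\alpha,\fg_{-\beta}]=\{0\}$ and there is nothing to prove, so we may assume $\gamma\in R$; since $\alpha\neq\beta$ we have $\gamma\neq 0$. Because $\pi(\alpha)=\pi(\beta)$, the linear functionals $\alpha$ and $\beta$ agree on $T^0$, so $\pi(\gamma)=0$; hence, by (\ref{sumgalpha}), $w\in\fg_\gamma\subseteq\fg_{\pi(\gamma)}=\fg_{\pi(0)}=C_\fg(T^0)$.

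Applying the projection $\pi=\pi_0$ of (\ref{newpi}) and invoking (\ref{pixinpa}) with $j=0$, we obtain
\[ \pi(w)\in\pi_0(\fg_\gamma)\subseteq\fg^0_{\pi(\gamma)}=\fg^0_{\pi(0)}=\fg^0\cap C_\fg(T^0)=C_{\fg^{0}}(T^{0}). \]
By axiom (A4), $C_{\fg^{0}}(T^{0})\subseteq\fg_0$, so $\pi(w)\in\fg_0$, the zero root space of the original toral pair $(\fg,T)$.

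It remains to see that this forces $\pi(w)=0$. Since $\sigma(\fg_\delta)=\fg_{\sigma(\delta)}$ for all $\delta\in R$, we have $\sigma^i(w)\in\fg_{\sigma^i(\gamma)}$ for each $i$, so $\pi(w)=\frac{1}{m}\sum_{i=0}^{m-1}\sigma^i(w)$ lies in $\bigoplus_\delta\fg_\delta$, the sum taken over the $\sigma$-orbit of $\gamma$ in $R$. As $\gamma\neq 0$ and $\sigma^i(0)=0$ for all $i$, this orbit does not contain $0$; hence the $\fg_0$-component of $\pi(w)$ in the root space decomposition $\fg=\bigoplus_{\delta\in R}\fg_\delta$ is zero. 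Combined with $\pi(w)\in\fg_0$ from the previous step, this yields $\pi([x,y])=\pi(w)=0$, as desired.

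The argument is essentially bookkeeping; the one genuine ingredient is (A4), which rules out elements of nonzero root degree in $C_{\fg^{0}}(T^{0})$ and without which the conclusion would fail. The only step I would check carefully is the chain of identifications $\fg^0_{\pi(0)}=\fg^0\cap C_\fg(T^0)=C_{\fg^{0}}(T^{0})$, i.e.\ that the ``$\pi$-root $0$'' homogeneous space in $\Z_m$-degree $0$ is exactly the centralizer of $T^0$ inside $\fg^0$, so that (\ref{pixinpa}) and (A4) are being applied in the correct degree.
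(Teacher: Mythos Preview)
Your proof is correct and follows essentially the same route as the paper's: both show that $\pi([x,y])$ lies in $C_{\fg^0}(T^0)\subseteq\fg_0$ via (A4), while simultaneously lying in a sum of root spaces $\fg_{\sigma^i(\alpha-\beta)}$ with $\sigma^i(\alpha-\beta)\neq 0$, forcing it to vanish. The only cosmetic difference is that you reach $\pi(w)\in C_{\fg^0}(T^0)$ by invoking (\ref{pixinpa}) and the identification $\fg^0_{\pi(0)}=\fg^0\cap C_\fg(T^0)$, whereas the paper argues directly that each $\sigma^i([x,y])$ lies in the $\sigma$-stable subspace $C_\fg(T^0)$; both are immediate, and your chain of identifications is indeed correct.
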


\begin{proof}
If $\a-\b \not\in R$, there is nothing to prove, so suppose $\a-\b \in R$. We have
$$[x,y]\in\fg_{\alpha-\beta}\subseteq\fg_{\pi(\alpha-\beta)}=\fg_{\pi(0)}=C_{\fg}(T^{
0}).$$ Therefore, $\sigma^i([x,y])\in C_{\fg}(T^{ 0})$, for all
$i$, and so $\pi([x,y])\in C_{\fg^{0}}(T^{0})$. Thus by (A4),
$\pi([x,y])\in \fg_0$. On the other hand, $\sigma^i([x,y])\in
\fg_{\sigma^i(\alpha-\beta)},$  for all $i,$ also as
$\alpha-\beta\neq0,$ we have $\sigma^i(\alpha-\beta)\not=0$. So
$\pi([x,y])$ is a sum of elements, each belong{s} to a root space
corresponding to a nonzero root. But since $\pi([x,y])\in\fg_0$,
this can  happen only  if $\pi([x,y])=0$.
\end{proof}

\begin{lem}\label{[pi_j,pi_-j]}
{(i) For $x,y\in \fg$ and $j,k\in\Z$, we have
\[
  [\pi_j(x),\pi_{k}(y)]= \pi_{j+k}([x,\pi_k(y)]).
\]
In particular,
$$ [\pi_j(x),\pi_{-j}(y)]= \pi([x,\pi_{-j}(y)])
  = \frac{1}{m}\sum_{i=0}^{m-1}\pi([x,\zeta^{ji}\sg^{i}(y)]).
$$
}

{(ii) If {$\alpha \in R$}, $x \in \fg_\alpha$, $y \in
\fg_{-\alpha}$ and $\ell:=\ell_\sigma(\alpha)$, then for $j\in\Z$,
$$ [\pi_j(x),\pi_{-j}(y)] =\frac{1}{m}\sum_{i=0}^{(m/\ell)-1}\pi([x,\zeta^{ji\ell}\sg^{i\ell}(y)]).
$$}

(iii)  Let $\alpha, \beta $ belong to distinct $\sigma$-orbits of
$R$ {with}
  $\pa=\pi(\beta)$. If $x \in \fg_\alpha, y \in \fg_{-\beta}$,
  then $[\pi_j(x),\pi_{-j}(y)]=0$.
\end{lem}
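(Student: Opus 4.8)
The plan is to prove part (i) directly from the explicit formula $\pi_j=\frac1m\sum_{i=0}^{m-1}\zeta^{-ji}\sigma^i$ of Lemma~\ref{pi_j}, and then to deduce parts (ii) and (iii) from part (i) together with Lemma~\ref{newtemplem1}. For part (i), the first step is the elementary identity: if $z\in\fg^{\bar k}$, then for every $i\in\Z$ one has $[\sigma^i(x),z]=\zeta^{-ki}\sigma^i([x,z])$, which is obtained by applying the automorphism $\sigma^i$ to $[x,z]$ and using $\sigma^i(z)=\zeta^{ki}z$. Substituting $\pi_j(x)=\frac1m\sum_{i=0}^{m-1}\zeta^{-ji}\sigma^i(x)$ and collecting powers of $\zeta$ then gives $[\pi_j(x),z]=\frac1m\sum_{i=0}^{m-1}\zeta^{-(j+k)i}\sigma^i([x,z])=\pi_{j+k}([x,z])$ by Lemma~\ref{pi_j}; taking $z=\pi_k(y)\in\fg^{\bar k}$ proves the first identity. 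The ``in particular'' statement is the special case $k=-j$: then $\pi_{j+k}=\pi_0=\pi$ by (\ref{newpi}), while $\pi_{-j}(y)=\frac1m\sum_{i=0}^{m-1}\zeta^{ji}\sigma^i(y)$ again by Lemma~\ref{pi_j}, and $\Z$-linearity of $\pi$ and bilinearity of the bracket give the displayed formula.

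For part (ii), I would start from the ``in particular'' formula just proved, namely $[\pi_j(x),\pi_{-j}(y)]=\frac1m\sum_{i=0}^{m-1}\pi([x,\zeta^{ji}\sigma^i(y)])$, and discard the terms with $\ell\nmid i$, where $\ell:=\ell_\sigma(\alpha)$. Indeed, since $\ell$ is precisely the cardinality of the $\sigma$-orbit of $\alpha$, the condition $\ell\nmid i$ forces $\sigma^i(\alpha)\neq\alpha$; on the other hand $\pi\circ\sigma=\pi$ on $T^\star$ (immediate from (\ref{newpi}) and $\sigma^m=\id$, or from (\ref{tempeq3})), so $\pi(\sigma^i(\alpha))=\pi(\alpha)$. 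As $\sigma^i(y)\in\fg_{-\sigma^i(\alpha)}$, Lemma~\ref{newtemplem1}, applied to the pair of distinct roots $\alpha$ and $\sigma^i(\alpha)$ with common $\pi$-image, yields $\pi([x,\sigma^i(y)])=0$. Hence only the indices $i=i'\ell$ with $0\le i'<m/\ell$ survive (recall $\ell\mid m$), and relabelling $i'$ as $i$ produces exactly the claimed sum.

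Part (iii) is handled in the same way: here $\sigma^i(y)\in\fg_{-\sigma^i(\beta)}$, and since $\alpha$ and $\beta$ lie in distinct $\sigma$-orbits we have $\sigma^i(\beta)\neq\alpha$ for all $i$, while $\pi(\sigma^i(\beta))=\pi(\beta)=\pi(\alpha)$; so Lemma~\ref{newtemplem1} applied to the pair $\alpha,\sigma^i(\beta)$ forces every summand $\pi([x,\sigma^i(y)])$ in the expansion of $[\pi_j(x),\pi_{-j}(y)]$ to vanish, giving $[\pi_j(x),\pi_{-j}(y)]=0$.

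None of these steps is computationally demanding; the only points requiring care are bookkeeping ones — keeping the correct sign in the exponent of $\zeta$ when passing between $\pi_j$ and $\pi_{-j}$, and using that $\ell_\sigma(\alpha)$ is exactly the length of the $\sigma$-orbit of $\alpha$, so that $\sigma^i(\alpha)=\alpha$ if and only if $\ell_\sigma(\alpha)\mid i$. The substance of parts (ii) and (iii) is entirely supplied by Lemma~\ref{newtemplem1}, so the only real task there is to check that the relevant pairs of roots are distinct yet share the same image under $\pi$.
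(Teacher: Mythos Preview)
Your proposal is correct and follows essentially the same approach as the paper: parts (ii) and (iii) are argued identically to the paper, splitting the sum from part (i) and killing the unwanted terms via Lemma~\ref{newtemplem1}. For part (i) the paper simply observes that $\pi_k(y)\in\fg^{\bar k}$ and that $\pi_{j+k}$ is the projection onto $\fg^{\overline{j+k}}$, so the identity is immediate from the $\Z_m$-grading; your explicit computation with the formula of Lemma~\ref{pi_j} is a valid alternative that unwinds the same fact.
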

\begin{proof}
(i) It  is clear, since $\pi_j$ is the projection onto $\fg^{\bar
j}$ with respect to  $\Z_m$-gradation of $\fg$.

 (ii) Assume that $\alpha$, $x$, $y$ and $\ell$ are as in the statement. By part (i),
 $$[\pi_j(x),\pi_{-j}(y)]=\frac{1}{m}\big(\sum_{\{0\leq t\leq m-1:\; \ell\mid t\}}\pi([x,\zeta^{jt}\sigma^t(y)])+\sum_{\{0\leq t\leq m-1:\; \ell\nmid t\}}\pi([x,\zeta^{jt}\sigma^t(y)])\big).$$
 So it is enough to show that $\pi([x,\sigma^t(y)])=0$ for all
 $0\leq t\leq m-1$ with $\ell\nmid t$. Assume that $\ell\nmid t$. Then $\alpha-\sigma^t(\alpha)\neq 0$, $x\in \fg_\alpha$, $\sg^t(y) \in \fg_{-\sigma^t(\alpha)}$ and $ \pa = \pi(\sigma^t(\alpha))$, thus by Lemma \ref{newtemplem1},
 $\pi([x,\sigma^t(y)])=0$.

 (iii)  By part (i), $[\pi_j(x),\pi_{-j}(y)]=
  \left(1/m\right)
  \sum_{i=0}^{m-1}\pi([x,\zeta^{ji}\sigma^i(y)])$.
By the  assumption, for any $i$, $\alpha-\sigma^i(\beta)\neq 0$
and so by Lemma \ref{newtemplem1},
$\pi([x,\zeta^{ij}\sigma^i(y)]=0$, hence
$[\pi_j(x),\pi_{-j}(y)]=0$.
 \end{proof}

{Next, consider $\hbox{Aut}(\fg)$, the automorphism group of
$\fg$.
 One knows that  the subgroup $(\sg)$ of $\Aut(\fg)$ generated by $\sigma$, acts naturally on $R$.
We call any orbit of this action, a {\it $\sg$-orbit}.  Then two
roots $\alpha,\beta$ belong
  to the same $\sg$-orbit if and only if $\sigma^i(\alpha)= \beta$, for some
  $i$. Fix a set $\hbox{orb}(R)$ of distinct representatives for all $\sg$-orbits,
  namely $R=\uplus_{\alpha\in\hbox{orb}(R)}(\sg)\cdot\alpha$.}
 The following two lemmas are of great importance for our goal.

\begin{lem}\label{templem2}
 Let $0\leq j\leq m-1$.

  (i) If $\alpha,\beta\in R$ belong to  the same $\sigma$-orbit, then
  $\pi_j(\fg_\alpha)=\pi_j(\fg_\beta)$.

  (ii) For $\alpha\in R$,
  $$\fg^{\bar j}_{\pi(\alpha)}
  =\sum_{\{\beta\in\hbox{orb}(R)\mid
  \pi(\beta)=\pi(\alpha)\}}\pi_j(\fg_{\beta}).$$
\end{lem}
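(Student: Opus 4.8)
The plan is to deduce both assertions from the relation $\sigma\circ\pi_j=\pi_j\circ\sigma=\zeta^j\pi_j$ of (\ref{tempeq3}), the equivariance $\sigma(\fg_\alpha)=\fg_{\sigma(\alpha)}$, and the description (\ref{sumgalpha}) of the homogeneous space $\fg_\pa$.

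For part (i), I would write $\beta=\sigma^k(\alpha)$ for some $k\in\Z$, so that $\fg_\beta=\sigma^k(\fg_\alpha)$. Applying $\pi_j$ and using (\ref{tempeq3}) $k$ times gives $\pi_j(\fg_\beta)=\pi_j\bigl(\sigma^k(\fg_\alpha)\bigr)=\zeta^{jk}\pi_j(\fg_\alpha)=\pi_j(\fg_\alpha)$, since $\zeta^{jk}$ is a nonzero scalar; this settles (i).

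For part (ii), I first record that $\pi$ is constant on each $\sigma$-orbit: for $\beta\in R$, taking $j=0$ in (\ref{tempeq3}) gives $\pi(\sigma(\beta))=\pi_0(\sigma(\beta))=(\pi_0\circ\sigma)(\beta)=\pi_0(\beta)=\pi(\beta)$, so the index set $\{\beta\in R\mid\pi(\beta)=\pa\}$ is a union of $\sigma$-orbits. Next, since $\sigma(\fg_\pa)=\fg_{\sigma(\pa)}=\fg_\pa$, the operator $\pi_j=\frac1m\sum_{i=0}^{m-1}\zeta^{-ij}\sigma^i$ of Lemma~\ref{pi_j} maps $\fg_\pa$ into itself, whence $\pi_j(\fg_\pa)\subseteq\fg_\pa\cap\fg^{\bar j}=\fg^{\bar j}_\pa$; conversely every $y\in\fg^{\bar j}_\pa$ satisfies $\pi_j(y)=y$, so in fact $\pi_j(\fg_\pa)=\fg^{\bar j}_\pa$. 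Applying $\pi_j$ to (\ref{sumgalpha}) then yields
$$\fg^{\bar j}_\pa=\pi_j(\fg_\pa)=\sum_{\{\beta\in R\mid\pi(\beta)=\pa\}}\pi_j(\fg_\beta).$$
Finally, grouping the summands according to their $\sigma$-orbit and using part (i) to replace each $\pi_j(\fg_\beta)$ by $\pi_j(\fg_\gamma)$, where $\gamma\in\hbox{orb}(R)$ is the representative of the orbit of $\beta$, collapses the right-hand side to $\sum_{\{\gamma\in\hbox{orb}(R)\mid\pi(\gamma)=\pa\}}\pi_j(\fg_\gamma)$, as desired.

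I do not anticipate a real obstacle here; the only point deserving a line of justification is that $\pi_j$ restricted to $\fg_\pa$ is precisely the coordinate projection onto $\fg^{\bar j}_\pa$ rather than merely having image inside it, which is exactly the surjectivity established above and is consistent with the compatible $(\langle\pi(R)\rangle,\Z_m)$-grading of Lemma~\ref{QpiR,Hgraded}.
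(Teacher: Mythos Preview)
Your proof is correct and follows essentially the same route as the paper's: part~(i) is identical, and for part~(ii) both you and the authors establish $\fg^{\bar j}_{\pa}=\pi_j(\fg_{\pa})$, apply $\pi_j$ to (\ref{sumgalpha}), and then collapse the sum using part~(i). The only cosmetic difference is that the paper obtains $\fg^{\bar j}_{\pa}=\pi_j(\fg_{\pa})$ by citing the compatible grading of Lemma~\ref{QpiR,Hgraded}, whereas you prove it directly from $\sigma(\fg_{\pa})=\fg_{\pa}$ and the formula for $\pi_j$; your explicit observation that $\pi$ is constant on $\sigma$-orbits is a detail the paper leaves implicit.
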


\begin{proof}
  (i) Suppose $\beta=\sigma^n(\alpha)$, $n\in\Z$. By
  (\ref{tempeq3}), $\pi_j\circ\sigma^n=\zeta^{nj}\pi_j$. Therefore
$$
  \pi_j(\fg_\beta)=\pi_j(\fg_{\sigma^n(\alpha)})=\pi_j\sigma^n(\fg_\alpha)=
  \zeta^{nj}\pi_j(\fg_\alpha)=\pi_j(\fg_\alpha).
$$

{  (ii) By Lemma \ref{QpiR,Hgraded}, for every $1\leq j \leq m-1$ and every $ \alpha \in R$ we have $\fg_\pa^{\bar j} = \pi_j(\fg_\pa)$.
  Now this together with (\ref{sumgalpha}) implies that
\begin{equation}\label{gpabarj}
   \fg_\pa^{\bar j}=\sum_{\{\beta\in R\mid\pi(\beta)=\pa\}} \pi_j(\fg_{\beta}).
\end{equation}
}
  and so the result follows immediately from part (i).
\end{proof}

{ Let {$\a\in R$}, $\ell:=\ell_\sg(\a)$ and $j\in\Z$. For
$x\in\fg_\a$, we set
\begin{equation}\label{xj}
\bar{x}_j:=\sum_{i=0}^{(m/\ell)-1}\zeta^{-ji\ell}\sg^{i\ell}(x)\in\fg_\a.
\end{equation}
Note that the implication $\bar{x}_j\in\fg_\a$ follows from the fact that $\sg^\ell(\fg_\a)=\fg_{\sg^\ell(\a)}=\fg_\a$.
The following observation is a key result for the rest of the work.
}

\begin{lem}\label{pi_-j(y)neq0}
  {Suppose {$\alpha\in R$}, $\ell:=\ell_\sigma(\alpha)$,  $x\in \fg_{\alpha}$ and $j\in \Z$. Then}

{(i)  $\pi_j(x)=(1/m)\sum_{i=0}^{\ell-1}\zeta^{-ji}\sigma^i(\bar
x_j)$,}

{(ii) $\pi_{j}(x)\neq 0$ if and only if $\bar{x}_j\not=0$.}
\end{lem}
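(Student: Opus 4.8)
The plan is to obtain (i) by a direct manipulation of the averaging formula for $\pi_j$ and then to read off (ii) from (i) using the root space decomposition of $\fg$.

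For (i), I would start from $\pi_j=\frac1m\sum_{n=0}^{m-1}\zeta^{-jn}\sg^n$ (Lemma~\ref{pi_j}) and expand the proposed right-hand side, inserting the definition $(\ref{xj})$ of $\bar x_j$:
\[
\frac1m\sum_{i=0}^{\ell-1}\zeta^{-ji}\sg^i(\bar x_j)
=\frac1m\sum_{i=0}^{\ell-1}\sum_{k=0}^{(m/\ell)-1}\zeta^{-j(i+k\ell)}\sg^{i+k\ell}(x).
\]
Since $\ell\mid m$, Euclidean division by $\ell$ shows that $(i,k)\mapsto i+k\ell$ is a bijection of $\{0,\dots,\ell-1\}\times\{0,\dots,(m/\ell)-1\}$ onto $\{0,\dots,m-1\}$, so the double sum is exactly $\frac1m\sum_{n=0}^{m-1}\zeta^{-jn}\sg^n(x)=\pi_j(x)$, giving (i).

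For (ii), one direction is immediate: if $\bar x_j=0$ then (i) forces $\pi_j(x)=0$. For the converse I would argue that $\pi_j(x)$, written as in (i), is a sum of vectors lying in pairwise distinct root spaces. Indeed $\bar x_j\in\fg_\a$ (as recorded just before the statement, using $\sg^\ell(\fg_\a)=\fg_\a$), hence $\sg^i(\bar x_j)\in\fg_{\sg^i(\a)}$ for $0\le i\le\ell-1$; moreover $\sg$ is an automorphism, so $\bar x_j\neq0$ implies $\sg^i(\bar x_j)\neq0$, while $\zeta^{-ji}\neq0$. By the definition of $\ell=\ell_\sigma(\a)$, the roots $\a,\sg(\a),\dots,\sg^{\ell-1}(\a)$ are pairwise distinct, so the spaces $\fg_{\sg^i(\a)}$ occur as distinct summands in the root space decomposition of $\fg$; therefore the sum in (i) cannot vanish, i.e. $\pi_j(x)\neq0$.

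I do not expect a genuine obstacle. The two points that deserve a word of justification are that the reindexing in (i) is really a bijection (pure Euclidean division by $\ell$), and that the minimality built into the definition of $\ell_\sigma(\a)$ is exactly what guarantees in (ii) that the roots $\sg^i(\a)$, $0\le i<\ell$, are distinct, so that the nonzero homogeneous components of $\pi_j(x)$ cannot cancel.
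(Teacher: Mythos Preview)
Your proof is correct and follows essentially the same approach as the paper: for (i) the paper also reindexes the sum $\sum_{n=0}^{m-1}\zeta^{-jn}\sg^n(x)$ via Euclidean division by $\ell$ (starting from $\pi_j(x)$ rather than from the right-hand side, but the computation is identical), and for (ii) the paper gives exactly your argument that the $\sg^i(\bar x_j)$ lie in the distinct root spaces $\fg_{\sg^i(\a)}$ for $0\le i\le\ell-1$.
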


\begin{proof}
{Set $k:=(m/\ell)-1$. Using Lemma~\ref{pi_j}, we have } {
\begin{eqnarray*}
  m\pi_{j}(x)&=&\sum_{i=0}^{m-1}\zeta^{-ji}\sigma^i(x)\\
             &=&\sum_{i=0}^{\ell-1}\zeta^{-ji}\sigma^i(x)+\sum_{i=\ell}^{2\ell-1}\zeta^{-ji}\sigma^i(x)+\cdots+ \sum_{i=k\ell}^{m-1}\zeta^{-ji}\sigma^i(x)\\
             &=&\sum_{s=0}^{k}\sum_{i=0}^{\ell-1}\zeta^{-j(s\ell+i)}\sigma^{s\ell+i}(x)\\
             &=&\sum_{i=0}^{\ell-1}\zeta^{-ji}\sigma^i\left(\sum_{s=0}^k\zeta^{-js\ell}\sigma^{s\ell}(x)\right)\\
             &=&\sum_{i=0}^{\ell-1}\zeta^{-ji}\sigma^i(\bar x_j).
\end{eqnarray*}
This proves (i).}

{(ii) Since for each $0\leq i\leq \ell-1$, $\sigma^i(\bar
x_j)\in\fg_{\sigma^i(\alpha)}$ and
$\alpha,\sigma(\alpha),\ldots,\sigma^{\ell-1}(\alpha)$ are
distinct roots, we concluded that
$\sum_{i=0}^{\ell-1}\zeta^{-ji}\sigma^i(\bar x_j) =0$ if and only
if $\bar x_j =0$. Therefore using part (i), we are done.}
\end{proof}

\section{\bf Division IARA's and automorphisms}\setcounter{equation}{0}\label{division IARA}
In this section, we use the same notation as in previous sections.
We also  assume that $(\fg,T)$ is a division IARA with root system
$R$, that is, $(\fg,T)$ satisfies axioms (IA1), (IA2)$'$ and
(IA3). Further suppose that $\sigma$ is an automorphism of $\fg$
satisfying (A1)-(A4). In Section \ref{toral-pairs-auto}, we saw
hat $(\fg, T^0)$ is a toral pair satisfying axiom (IA1), and
established several other properties of $(\fg,T^0)$.
 Our main aim in this section is to show
that $(\fg,T^0)$ is an IARA with root system $\pi(R)$. This in
particular implies that  $\pi(R)$ is an affine reflection system.

\begin{lem}\label{[],()}
Let {$\a \in R$}, $x \in \fg_\a$ and $y \in \fg_{-\a}$. If $j\in
\Z$ and $\bar x_j$ is defined as in
 (\ref{xj}), then

  {(i)  $[\pi_j(x),\pi_{-j}(y)]=(1/m)\pi([\bar x_j,y])$,}

  {(ii) $(\pi_j(x),\pi_{-j}(y))=(1/m)(\bar x_j,y)$.}
\end{lem}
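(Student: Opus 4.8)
The plan is to reduce both assertions to identities already established in Sections~\ref{toral-pairs-auto} and~\ref{gradings}. For part (i), I would start from Lemma~\ref{[pi_j,pi_-j]}(ii), which gives
\[
[\pi_j(x),\pi_{-j}(y)]=\frac{1}{m}\sum_{i=0}^{(m/\ell)-1}\pi\big([x,\zeta^{ji\ell}\sigma^{i\ell}(y)]\big),
\]
where $\ell:=\ell_\sigma(\alpha)$. Since $\sigma$ preserves the form (axiom (A3)) and acts as a Lie algebra automorphism, $\pi$ is $\sigma$-equivariant up to the scalar picked out in Lemma~\ref{pi_j}; the key point is that $\pi\circ\sigma^{i\ell}$ applied inside the bracket lets me move the twist $\zeta^{ji\ell}\sigma^{i\ell}$ off the second slot and onto the first. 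Concretely, using that $\pi=\pi_0$ and $\pi\circ\sigma^{i\ell}=\pi$ (from Lemma~\ref{pi_j}, since $\sigma^i\circ\pi_0=\zeta^{0}\pi_0$), together with $\sigma^{-i\ell}(y)\in\fg_{\sigma^{-i\ell}(-\alpha)}=\fg_{-\alpha}$ because $\ell\mid i\ell$, I can rewrite each summand as $\pi\big([\zeta^{-ji\ell}\sigma^{-i\ell}(x),y]\big)$ after applying $\sigma^{-i\ell}$ inside $\pi$ and invoking invariance of $\pi$ under $\sigma^{i\ell}$. Summing over $i$ and recognizing $\sum_{i=0}^{(m/\ell)-1}\zeta^{-ji\ell}\sigma^{i\ell}(x)=\bar x_j$ from definition~(\ref{xj}), I obtain $[\pi_j(x),\pi_{-j}(y)]=(1/m)\pi([\bar x_j,y])$.

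For part (ii), the argument is parallel but uses the bilinear form in place of the bracket. Because the form is $\Z_m$-graded (equation~(\ref{newform})), $(\fg^h,\fg^k)=\{0\}$ unless $h+k=0$, so $(\pi_j(x),\pi_{-j}(y))=(\pi_j(x),y)$: only the $\fg^{-\overline j}$-component of $y$ pairs nontrivially with $\pi_j(x)\in\fg^{\overline j}$. Then I expand $\pi_j(x)=(1/m)\sum_{i=0}^{m-1}\zeta^{-ji}\sigma^i(x)$ by Lemma~\ref{pi_j}, use (A3) to rewrite $(\sigma^i(x),y)=(x,\sigma^{-i}(y))$, and split the sum over $i$ according to whether $\ell\mid i$ or not. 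For $\ell\nmid i$ one has $\sigma^{-i}(y)\in\fg_{-\sigma^{-i}(\alpha)}$ with $\sigma^{-i}(\alpha)\neq\alpha$, so $(x,\sigma^{-i}(y))=0$ by (\ref{new-1}); for $\ell\mid i$, say $i=s\ell$, the terms reassemble into $(x,\sum_{s=0}^{(m/\ell)-1}\zeta^{js\ell}\sigma^{s\ell}(y))$, and applying (A3) once more converts this into $(\bar x_j,y)$. This yields $(\pi_j(x),\pi_{-j}(y))=(1/m)(\bar x_j,y)$.

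The main obstacle I anticipate is bookkeeping with the twists and the $\sigma$-equivariance of $\pi$: one must be careful that $\pi$ commutes with $\sigma^{i\ell}$ only because $\ell\mid i\ell$ (so no nontrivial root of unity appears), and that moving $\sigma^{i\ell}$ between the two slots of the bracket or the form is legitimate precisely because $\sigma$ fixes $\pi$ and, via (A3), preserves the form. The alternative — and arguably cleaner — route for (i) is to apply Lemma~\ref{pi_-j(y)neq0}(i) to express $\pi_j(x)$ in terms of $\bar x_j$ directly, namely $\pi_j(x)=(1/m)\sum_{i=0}^{\ell-1}\zeta^{-ji}\sigma^i(\bar x_j)$, and a symmetric expansion of $\pi_{-j}(y)$, then collapse the double sum using that cross terms from distinct roots $\sigma^i(\alpha)\neq\sigma^{i'}(\alpha)$ vanish under $\pi$ (Lemma~\ref{newtemplem1}) or under the form ((\ref{new-1})); I would present whichever bookkeeping turns out shorter.
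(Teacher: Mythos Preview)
Your proposal is correct. For part~(i), your route and the paper's are essentially the same: both start from Lemma~\ref{[pi_j,pi_-j]}(ii), and the only difference is how the twist migrates from $y$ to $x$. The paper applies Lemma~\ref{[pi_j,pi_-j]}(ii) with the substitution $j\mapsto -j$, $\alpha\mapsto -\alpha$, $x\leftrightarrow y$, so that $\sigma^{i\ell}$ lands on $x$ from the outset and the sum is recognized as $\bar x_j$ immediately; you instead use $\pi\circ\sigma^{i\ell}=\pi$ inside each summand and then reindex. (One bookkeeping correction: after applying $\sigma^{-i\ell}$ inside $\pi$ the summand is $\pi([\zeta^{ji\ell}\sigma^{-i\ell}(x),y])$, not $\zeta^{-ji\ell}$; the reindexing $i\mapsto (m/\ell)-i$ then produces $\bar x_j$.)

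For part~(ii) your argument genuinely differs from the paper's and is somewhat more economical. The paper expands \emph{both} $\pi_j(x)$ and $\pi_{-j}(y)$ via Lemma~\ref{pi_-j(y)neq0}(i) in terms of $\bar x_j$ and $\bar y_{-j}$, pairs the resulting $\ell$-term sums using orthogonality of the root spaces $\fg_{\sigma^i(\alpha)}$, and then unwinds $\bar y_{-j}$ with (A3) to reach $(1/m)(\bar x_j,y)$. Your approach avoids $\bar y_{-j}$ entirely: the $\Z_m$-gradedness of the form (equation~(\ref{newform})) collapses $(\pi_j(x),\pi_{-j}(y))$ to $(\pi_j(x),y)$, and a single expansion of $\pi_j(x)$ via Lemma~\ref{pi_j}, together with (\ref{new-1}) to kill the $\ell\nmid i$ terms and (A3) to transfer $\sigma$ across the form, yields $(\bar x_j,y)/m$ directly. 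Both proofs are short; yours carries less notation, while the paper's makes the symmetry between $x$ and $y$ more visible.
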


\begin{proof}
(i) {Let $k:=(m/\ell)-1$. By Lemma \ref{[pi_j,pi_-j]}, replacing
$j$ with $-j$, $\alpha$ with $-\alpha$ and $x$ with $y$, we have
\begin{eqnarray*}
  [\pi_j(x),\pi_{-j}(y)]&=& \frac{1}{m} \sum_{i=0}^{k} \pi([\zeta^{-j\ell i}\sigma^{\ell i}(x),y])\\
                        &=& \frac{1}{m} \pi([\sum_{i=0}^{k}\zeta^{-j\ell i}\sigma^{\ell i}(x),y])\\
                        &=& \frac{1}{m}  \pi([\bar x_j, y]).
\end{eqnarray*}
(ii) By Lemma \ref{pi_-j(y)neq0}(i), $\pi_j(x) =
(1/m)\sum_{i=0}^{\ell-1}\zeta^{-ji} \sg^i(\bar x_j)$ and
$\pi_{-j}(y) = (1/m) \sum_{i=0}^{\ell-1}\zeta^{ji} \sg^i(\bar
y_{-j})$. Also, using the definition of $\ell:=\ell_\sg(\a)$ and
(\ref{new-1}), we see that
$(\fg_{\sg^i(\a)},\fg_{\sg^{j}(-\a)})=\{0\}$, if $0\leq
i\not=j\leq \ell-1$. Hence {\begin{align*}
  (\pi_j(x),\pi_{-j}(y)) &= \frac{1}{m^2} \sum_{i=0}^{\ell-1}(\sg^{i}(\bar x_j),\sg^{i}(\bar y_{-j}))\\
                         &= \frac{1}{m^2} \ell (\bar x_j,\bar y_{-j})\\
                         &= \frac{1}{m^2} \ell \sum_{i=0}^{k} (\bar x_j, \zeta^{ji\ell}\sg^{i\ell}(y))\\
                         &= \frac{1}{m^2} \ell \sum_{i=0}^{k} \zeta^{ji\ell} (\sg^{-i\ell}(\bar x_j),y)\\
                         &= \frac{1}{m^2} \ell \sum_{i=0}^{k} (\bar x_j,y)\\
                         &= \frac{1}{m}(\bar x_j,y).\tag*{\qedhere}
\end{align*}
}}
\end{proof}
\begin{lem}\label{beforstar}
   Let $\alpha\in R$ with $\pa\neq 0$. Suppose $x \in \fg_{\alpha}$ {and
 $\pi_{j}(x)\neq 0$, for some $j\in\Z$. Then} there exists $y\in \fg_{-\alpha}$ such that $0\neq [\pi_j(x),\pi_{-j}(y)]\in T^0$.
\end{lem}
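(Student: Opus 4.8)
The plan is to feed the element $\bar x_j\in\fg_\alpha$ into the division axiom (IA2)$'$, and then transport the resulting $T$-valued bracket down to $T^0$ using Lemma~\ref{[],()}(i) together with Lemma~\ref{pa}. This is essentially the step that upgrades (IA1) for $(\fg,T^0)$ (Proposition~\ref{toralpair}) to a verification of (IA2) for the homogeneous pieces $\pi_j(\fg_\alpha)$.

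First I would record two easy observations. Since $\pi(\alpha)\neq 0$ we have $\alpha\neq 0$, so $\alpha\in R\setminus\{0\}$; and by Lemma~\ref{pa} the element $t_{\pi(\alpha)}$ is the unique element of $T^0$ with $\pi(\alpha)(t)=(t,t_{\pi(\alpha)})$ for all $t\in T^0$, whence $t_{\pi(\alpha)}\neq 0$ (otherwise $\pi(\alpha)$ would vanish on $T^0$, and as $\pi(\alpha)$ is already trivial on $\sum_{\bar i\neq 0}T^{\bar i}$ this would force $\pi(\alpha)=0$). Next, set $w:=\bar x_j\in\fg_\alpha$, with $\bar x_j$ as in (\ref{xj}); by Lemma~\ref{pi_-j(y)neq0}(ii) the hypothesis $\pi_j(x)\neq 0$ is equivalent to $w\neq 0$. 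Now apply (IA2)$'$ (available because $(\fg,T)$ is a division IARA and $w\neq 0$, $\alpha\neq 0$): there exists $y\in\fg_{-\alpha}$ with $0\neq[w,y]\in T$. By Lemma~\ref{tpialpha}, $[w,y]=(w,y)\,t_\alpha$, and since $[w,y]\neq 0$ we conclude $(w,y)\neq 0$.

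Finally I would combine these facts. By Lemma~\ref{[],()}(i), applied with our original $x\in\fg_\alpha$ and the $y\in\fg_{-\alpha}$ just produced,
\[
  [\pi_j(x),\pi_{-j}(y)]=\frac{1}{m}\,\pi([\bar x_j,y])=\frac{1}{m}\,\pi([w,y])=\frac{1}{m}(w,y)\,\pi(t_\alpha)=\frac{1}{m}(w,y)\,t_{\pi(\alpha)},
\]
where the last equality uses $\pi(t_\alpha)=t_{\pi(\alpha)}$ from Lemma~\ref{pa}. Since $t_{\pi(\alpha)}\in T^0$, the bracket lies in $T^0$; and since $(w,y)\neq 0$ and $t_{\pi(\alpha)}\neq 0$, it is nonzero. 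This is exactly the assertion. I do not expect a genuine obstacle: the only place requiring a little care is the chain $\pi_j(x)\neq 0\Rightarrow\bar x_j\neq 0\Rightarrow$ (IA2)$'$ applies $\Rightarrow(w,y)\neq 0$, and the fact that $t_{\pi(\alpha)}\neq 0$ precisely because $\pi(\alpha)\neq 0$; once these are noted, the conclusion is a direct substitution into the formulas already established in Lemmas~\ref{tpialpha}, \ref{pa}, \ref{pi_-j(y)neq0} and \ref{[],()}.
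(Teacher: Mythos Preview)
Your proof is correct and follows essentially the same approach as the paper: both use Lemma~\ref{pi_-j(y)neq0}(ii) to get $\bar x_j\neq 0$, apply (IA2)$'$ to obtain $y\in\fg_{-\alpha}$ with $0\neq[\bar x_j,y]\in T$, deduce $(\bar x_j,y)\neq 0$ via Lemma~\ref{tpialpha}, and then combine Lemmas~\ref{[],()}(i) and~\ref{pa} to conclude $[\pi_j(x),\pi_{-j}(y)]=\frac{1}{m}(\bar x_j,y)\,t_{\pi(\alpha)}\in T^0\setminus\{0\}$. Your write-up is slightly more explicit about why $t_{\pi(\alpha)}\neq 0$, but the argument is the same.
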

\begin{proof}
{Contemplating (\ref{xj}), Lemma \ref{pi_-j(y)neq0} implies that
$\bar x_j$ is a nonzero element of $\fg_\a$. Since by our
assumption, the axiom (IA2)$'$
  holds for $(\fg, T)$, there exists $y\in \fg_{-\alpha}$ such that $0\neq [\bar x_j, y]\in T$. Therefore, by Lemma \ref{tpialpha},
\begin{equation}\label{tempeq1}
(\bar x_j,y)\not=0.
\end{equation}
Now combining this, Lemmas \ref{[],()}, \ref{tpialpha} and
\ref{pa}, we get
$$[\pi_j(x),\pi_{-j}(y)]=\frac{1}{m}(\bar x_j, y) t_\pa\in T^0.$$
But as $\pi(\alpha)\not=0$, we have $t_\pa\not=0$, and so we are
done by (\ref{tempeq1}).}
\end{proof}

\begin{lem}\label{star}
Let $\alpha\in R$ with $\pi(\alpha)\not=0$, {and $j\in\Z$}.  Then
for every $0\neq e_{\pi(\alpha)}^{\bar j} \in
\fg_{\pi(\alpha)}^{\bar j}$ there exists $f_{\pi(\alpha)}^{\bar j}
\in \fg_{-\pi(\alpha)}^{-\bar j}$ such that
  $0 \neq [e_{\pi(\alpha)}^{\bar j} ,f_{\pi(\alpha)}^{\bar j}] \in T^0$. In particular, axiom (IA2) holds for
  the toral pair $(\fg, T^0)$.
\end{lem}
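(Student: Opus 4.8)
The plan is to reduce, via the $\sigma$-orbit decomposition of $e_{\pi(\alpha)}^{\bar j}$, to the single-orbit situation already settled in Lemma~\ref{beforstar}. Fix a set $\hbox{orb}(R)$ of representatives of the $\sigma$-orbits of $R$. First I would apply Lemma~\ref{templem2}(ii) to write
\[ 0\neq e:=e_{\pi(\alpha)}^{\bar j}=\sum_{\beta}\pi_j(x_\beta),\qquad x_\beta\in\fg_\beta, \]
a finite sum over those representatives $\beta\in\hbox{orb}(R)$ with $\pi(\beta)=\pi(\alpha)$; by~(\ref{pixinpa}) each summand lies in $\fg^{\bar j}_{\pi(\alpha)}$. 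Since $e\neq 0$, there is a representative $\beta_0$ in this set with $\pi_j(x_{\beta_0})\neq 0$.

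Next, as $\pi(\beta_0)=\pi(\alpha)\neq 0$, Lemma~\ref{beforstar} applies to $x_{\beta_0}\in\fg_{\beta_0}$ and produces $y\in\fg_{-\beta_0}$ with $0\neq[\pi_j(x_{\beta_0}),\pi_{-j}(y)]\in T^0$. I would then put $f:=\pi_{-j}(y)$; by~(\ref{pixinpa}) applied to $-\beta_0$ and $-j$, together with $\pi(-\beta_0)=-\pi(\alpha)$, we get $f\in\fg^{-\bar j}_{-\pi(\alpha)}$, so $f$ is an admissible candidate for $f_{\pi(\alpha)}^{\bar j}$. Then
\[ [e,f]=\sum_{\beta}[\pi_j(x_\beta),\pi_{-j}(y)], \]
and for every $\beta\neq\beta_0$ in the index set, $\beta$ and $\beta_0$ are distinct elements of $\hbox{orb}(R)$, hence lie in distinct $\sigma$-orbits, while $\pi(\beta)=\pi(\beta_0)$; thus Lemma~\ref{[pi_j,pi_-j]}(iii) gives $[\pi_j(x_\beta),\pi_{-j}(y)]=0$. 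Hence $[e,f]=[\pi_j(x_{\beta_0}),\pi_{-j}(y)]$, which is the nonzero element of $T^0$ found above. This proves the first assertion, with $f_{\pi(\alpha)}^{\bar j}:=f$.

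For the ``in particular'' statement, let $\mu\in\pi(R)\setminus\{0\}$, write $\mu=\pi(\alpha)$ with $\alpha\in R$, and note that $\fg_{\pi(\alpha)}$ is $\sigma$-stable, hence $\Z_m$-graded: $\fg_{\pi(\alpha)}=\bigoplus_{h\in\Z_m}\fg^h_{\pi(\alpha)}$. Since $\mu$ is a root of $(\fg,T^0)$ this space is nonzero, so some $\fg^{\bar j}_{\pi(\alpha)}\neq 0$, and applying the first part to a nonzero element of it yields the pair required by (IA2) for $(\fg,T^0)$. I expect the only delicate point to be the bidegree bookkeeping for $f=\pi_{-j}(y)$ and checking that distinct orbit representatives feed correctly into Lemma~\ref{[pi_j,pi_-j]}(iii); there is no real obstacle, as Lemmas~\ref{beforstar} and~\ref{[pi_j,pi_-j]}(iii) carry all the weight.
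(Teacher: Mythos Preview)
Your proof is correct and follows essentially the same route as the paper's: decompose $e^{\bar j}_{\pi(\alpha)}$ via Lemma~\ref{templem2}(ii) into projections $\pi_j(x_\beta)$ over distinct $\sigma$-orbit representatives, apply Lemma~\ref{beforstar} to a nonzero summand to produce $y$, and then use Lemma~\ref{[pi_j,pi_-j]}(iii) to kill the cross terms in $[e,\pi_{-j}(y)]$. The final ``in particular'' is also argued the same way, via the $\Z_m$-grading of $\fg_{\pi(\alpha)}$.
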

\begin{proof}
  By Lemma \ref{templem2}, $e_\pa^{\bar j} = \pi_j(x_1) + \cdots + \pi_j(x_n)$
where $x_i\in \fg_{\alpha_i}$ for some $\alpha_i$'s belong to
distinct $\sigma$-orbits of $R$, satisfying $\pi(\alpha_i)=\pa$,
for all $i$. Thus for some $1\leq i \leq n$, $\pi_j(x_i)\neq 0$,
and by Lemma \ref{beforstar}, there exists
  $y \in \fg_{-\alpha_i}$ such that $0\neq [\pi_j(x),\pi_{-j}(y)] \in T^0$.  So using
\rred{Lemma~\ref{[pi_j,pi_-j]}~(iii)}, we have
\begin{eqnarray*}
  [e_\pa^{\bar j},\pi_{-j}(y)]= [\pi_j(x_i),\pi_{-j}(y)]\in T^0\setminus
  \{0\}.
\end{eqnarray*}
Now setting  $f_\pa^{\bar j}:=\pi_{-j}(y),$ we get the first
assertion as by (\ref{pixinpa}), $\pi_{-j}(y) \in
\fg^{-j}_{-\pa}.$ To see the final assertion in the statement, let
$\alpha\in R$ with $\pi(\alpha)\not=0$. As
$0\not=\fg_\alpha\subseteq\fg_{\pi(\alpha)}= \sum_{\bar
j\in\Z_m}\fg_{\pi(\alpha)}^{\bar j}$, we have
$\fg_{\pi(\alpha)}^{\bar j}\not=0$ for some $\bar j$. {Now by the
first part of the statement}, there exist $e^{\bar
j}_{\pi(\alpha)}\in\fg^{\bar j}_{\pi(\alpha)}$ and $f^{\bar
j}_{\pi(\alpha)}\in\fg^{-\bar j}_{-\pi(\alpha)}$ such that
$0\not=[e^{\bar j}_{\pi(\alpha)},f^{\bar j}_{\pi(\alpha)}]\in
T^0$. This means that  (IA2) holds for $(\fg, T^0)$.
\end{proof}

We are now ready to state the main result of this section, {which
extends \cite[Theorem 3.4 ]{ay} to a rather larger class.}
\begin{thm}\label{gt0isiara}
  Let $(\fg, T)$ be a division IARA with corresponding root system $R$ and bilinear form $\form$. Suppose $\sigma$
  is an automorphism of $\fg$ satisfying (A1)-(A4){, and $T^0$ is} the set of fixed points of
  $\sigma$ on $T$. For $\alpha\in R$, {let $\pa$ be the restriction of $\a$ to $T^0$.} Then $(\fg, T^0)$
  is an IARA with root system $\pi(R): = \left\{ \pa \mid \alpha \in R \right\}$.
  In particular, $\pi(R)$ is an affine reflection system. Moreover, if $R$ is indecomposable, then so is $\pi(R)$.
\end{thm}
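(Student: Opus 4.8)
The plan is to verify the three axioms (IA1), (IA2), (IA3) for the pair $(\fg,T^0)$ with root system $\pi(R)$, assembling the pieces already established in Sections \ref{gradings}--\ref{division IARA}, and then separately deal with the affine reflection system and irreducibility statements. That $(\fg,T^0)$ is a toral pair with root system $\pi(R)$ and that (IA1) holds is exactly Proposition \ref{toralpair}: the form $\form$ restricted to $T^0$ is nondegenerate (by \eqref{newform}, since $T^0$ is the degree-zero part and the form is $\Z_m$-graded, nondegeneracy of $\form$ on $T$ forces nondegeneracy on $T^0$), and the bilinear form on $\mathrm{span}_\F\pi(R)$ transferred via the $t_{\pi(\alpha)}$ is well-defined by Lemma \ref{pa}. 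Axiom (IA2) for $(\fg,T^0)$ is precisely the content of Lemma \ref{star}.

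For (IA3), I would argue as follows. Let $\pa\in\pi(R)$ with $(\pa,\pa)\neq 0$ and pick any nonzero $x\in\fg_{\pi(\alpha)}$. I need $\ad x$ locally nilpotent on $\fg$. Write $x=\sum_\beta x_\beta$ as a finite sum over $\{\beta\in R\mid\pi(\beta)=\pa\}$ with $x_\beta\in\fg_\beta$. The key point is that $(\pa,\pa)\neq 0$ together with $(\pa,\pa)=(t_{\pa},t_{\pa})=(\pi(t_\alpha),\pi(t_\alpha))$ and Lemma \ref{pa} should force $(\beta,\beta)\neq 0$ for each $\beta$ appearing — here one uses that $\sigma$ preserves the form and permutes the roots, so all roots in a $\sigma$-orbit have the same square-length, and one needs to rule out cancellation; I expect to invoke that $R$ is an affine reflection system (Remark \ref{rem}) and that $\pi(\beta)$ isotropic forces $\beta$ isotropic, via \cite[Theorem 1.13]{AYY} as used in the proof of Lemma \ref{newtemplem6}. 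Once each $x_\beta\in\fg_\beta$ with $(\beta,\beta)\neq 0$, axiom (IA3) for $(\fg,T)$ gives that each $\ad x_\beta$ is locally nilpotent, and since $\fg$ is a Lie algebra over a field of characteristic zero and the $x_\beta$ lie in a finite-dimensional subspace, a standard argument (a finite sum of locally nilpotent, "locally finite" adjoint operators whose span consists of such operators) yields that $\ad x$ is locally nilpotent. The cleanest route is to show each $x_\beta$ is an $\ad$-locally-nilpotent element and that the $\F$-span of finitely many root vectors for non-isotropic roots generates a locally finite action; alternatively one notes that $\ad x$ preserves a suitable exhaustion of $\fg$ by finite-dimensional $\ad$-invariant subspaces on which it is nilpotent.

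The affine reflection system assertion is then free: by Remark \ref{rem} the root system of any IARA is an affine reflection system in its $\Z$-span, so having shown $(\fg,T^0)$ is an IARA with root system $\pi(R)$, we conclude $\pi(R)$ is an affine reflection system. The irreducibility statement is exactly Lemma \ref{newtemplem6}: if $R$ is indecomposable (i.e. $R^\times$ is connected), then $\pi(R)$ is indecomposable. I would simply cite these two results.

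The main obstacle I anticipate is the verification of (IA3), specifically showing that no cancellation occurs when one decomposes a homogeneous element $x\in\fg_{\pi(\alpha)}$ of non-isotropic "$\pi$-length" into its $R$-homogeneous components — i.e. that $(\pa,\pa)\neq 0$ genuinely propagates to each constituent root $\beta$ — and then the soft functional-analytic step that a finite sum of elements with locally nilpotent adjoint action, all living inside a finite-dimensional subspace, again has locally nilpotent adjoint action. Everything else is bookkeeping with results already in hand.
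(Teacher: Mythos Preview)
Your handling of (IA1), (IA2), the affine reflection system conclusion, and indecomposability is fine and matches the paper's use of Proposition~\ref{toralpair}, Lemma~\ref{star}, Remark~\ref{rem}, and Lemma~\ref{newtemplem6}.

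The gap is in your treatment of (IA3). The step you flag as ``soft'' is not soft: a finite sum of ad-locally-nilpotent elements is \emph{not} in general ad-locally-nilpotent, and nothing you have written explains why it would be here. Your $x_\beta$'s lie in distinct $T$-root spaces $\fg_\beta$ with the $\beta$'s non-isotropic but not equal, so the operators $\ad x_\beta$ need not commute, and no finite-dimensional $\ad x$-invariant exhaustion is evident. (Incidentally, the implication you need is ``$\beta$ isotropic $\Rightarrow \pi(\beta)$ isotropic'', which does hold since isotropic roots lie in the radical of the positive semidefinite form and $\sigma$ preserves that radical; you stated the converse.)

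The paper bypasses this difficulty entirely with a different, much shorter argument that never decomposes $x$. Given $x\in\fg_{\pi(\alpha)}$ with $(\pi(\alpha),\pi(\alpha))\ne 0$ and $y\in\fg_{\pi(\beta)}$, one has $(\ad x)^n(y)\in\fg_{n\pi(\alpha)+\pi(\beta)}$, so if nonzero it is an eigenvector of $\ad t_{\pi(\alpha)}$ with eigenvalue $(n\pi(\alpha)+\pi(\beta))(t_{\pi(\alpha)})$. These eigenvalues are pairwise distinct as $n$ varies because $(\pi(\alpha),\pi(\alpha))\ne 0$. On the other hand every eigenvalue of $\ad t_{\pi(\alpha)}$ on $\fg$ is of the form $\pi(\gamma)(t_{\pi(\alpha)})=(\gamma,\pi(\alpha))$ for some $\gamma\in R$, and by Lemma~\ref{pa} this lies in $\frac{1}{m}(A+\cdots+A)$ ($m$ summands) where $A=\{(\gamma,\delta)\mid\gamma,\delta\in R\}$; since $R$ is an affine reflection system, $A$ is finite. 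Hence $\ad t_{\pi(\alpha)}$ has only finitely many eigenvalues, forcing $(\ad x)^n(y)=0$ for $n$ large. This finiteness-of-eigenvalues argument is the key idea you are missing.
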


\begin{proof}
 We have shown in Lemma \ref{toralpair} that $(\fg, T^0)$ is a toral pair such that
  $\fg = \bigoplus_{\pa \in \pi(R)} \fg_\pa$, and that axiom (IA1) of Definition \ref{iara} holds for $(\fg, T^0)$.
   Also by Lemma \ref{star}, (IA2) holds for  $\fg$. So it remains to prove (IA3).

  Let $\alpha,\beta\in R$ with $(\pi(\alpha),\pi(\alpha))\not=0$, $x\in\fg_{\pi(\alpha)}$ and $y\in\fg_{\pi(\beta)}$. We must show
  that  $\hbox{ad}(x)^n(y)=0$ for some $n$. We know that $\hbox{ad}(x)^n(y)\in \fg_{n\pa+\pi(\beta)},$ so
  \[ [t_\pa,(\hbox{ad}x)^n(y)]=(n\pa+\pi(\beta))(t_\pa) (\hbox{ad}x)^n(y).\]
  {Therefore  if $(\hbox{ad}x)^n(y)$ is nonzero, it is an eigenvector for $\hbox{ad} t_\pa$ with eigenvalue
  $(n\pi(\alpha)+\pi(\beta))(t_{\pi(\a)}).$ But  for distinct values of $n,$
 the scalers $(n\pi(\alpha)+\pi(\beta))(t_{\pi(\a)})$ are distinct, so it is
 enough to show that
  $\hbox{ad}t_\pa$ has a finite number of eigenvalues as an operator on $\fg$. One knows that
  each
  eigenvalue of $\ad t_\pa$ on $\fg$ is of the form $\pi(\gamma)(t_\pa)$ for some $\gamma\in R$, and by Lemma~\ref{pa},
  \[\pi(\gamma)(t_\pa)=(\pi(\gamma),\pa)=(\gamma,\pa)\subseteq\frac{1}{m}(\underbrace{A+A+\cdots+A}_{\hbox{m-times}})\]
  where $A:=\{ (\gamma,\beta)\mid \gamma,\beta\in R\}.$}
 Now since $R$ is an affine reflection system, the set
 $A$ is finite; see \cite[Sections 3.7,3.8]{nehersurvey} {and \cite[Theorem 8.4]{nllfrs}}.
Therefore $\ad t_\pa$ has only a finite number of eigenvalues.
These all together show  that $(\fg, T^0)$ is an IARA. Thus its
root system $\pi(R)$ is an affine reflection system, by
\cite[Theorem 6.8]{nehersurvey}. {T}he final assertion of the
statement follows from Lemma \ref{newtemplem6}.
\end{proof}

\begin{rem}\label{sl2}
  {Suppose  $\pi(\alpha) \in \pi(R)^\times$ and $h\in\Z_m$. By Lemmas \ref{star} and \ref{tpialpha}, we may {choose}
  $e_{\pi(\alpha)}^h\in\fg_\pa^h$ and $f_{\pi(\alpha)}^h\in\fg^h_{-\pa}$
 such that $[e_{\pa}^h, f^h_{\pa}]=(e^h_\pa, f^h_{\pa})t_{\pa}\not=0$.}
  {So  multiplying $f_{\pi(\alpha)}^h $ by $2/((e_{\pi(\alpha)}^h, f_{\pi(\alpha)}^h )({\pi(\alpha)},{\pi(\alpha)}))$
  we have
  \[[e_{\pi(\alpha)}^h ,f_{\pi(\alpha)}^h] = \frac{2 t_{\pi(\alpha)}}{(\pi(\alpha), \pi(\alpha))}.\]
Thus  setting  $h_\pa:=2 t_{\pi(\alpha)} / (\pi(\alpha),
\pi(\alpha))$, the triple  $\{ e_{\pi(\alpha)}^h
,h_\pa,f_{\pi(\alpha)}^h\}$ forms an $\mathfrak{sl}_2$-triple.}
\end{rem}

\begin{lem}\label{(pij,pi-j)neq0}
{Let $j\in\Z$, $\a\in R\setminus\{0\}$, $\pi(\a)=0$ and
$\pi_j(\fg_\a)\not=\{0\}$.}

{(i) For each $x\in\fg_\a$ with $\pi_j(x)\not=0$, there exists
$y\in\fg_{-\a}$ such that $[\pi_j(x),\pi_{-j}(y)]=0$, but
$(\pi_j(x),\pi_{-j}(y))\not=0$.}

{(ii) There exists $e \in \fg_{\pi(0)}^{\bar j}$ and $f \in
\fg_{\pi(0)}^{-\bar j}$ such that
  $[e,f] = 0$ but
  $(e,f) \neq 0$.}
\end{lem}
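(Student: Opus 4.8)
The plan is to run the proof of Lemma~\ref{beforstar} "in the isotropic direction": the bracket will now vanish because $\pi(\a)=0$ kills the relevant coroot, while the $\Z_m$-graded invariant form, which is nondegenerate on $\fg_\a\oplus\fg_{-\a}$, still detects the pairing. For part (i) I would fix $x\in\fg_\a$ with $\pi_j(x)\neq0$. By Lemma~\ref{pi_-j(y)neq0}(ii) this is equivalent to $\bar x_j\neq0$, and $\bar x_j\in\fg_\a$ by the definition~(\ref{xj}) (since $\sg^{\ell_\sigma(\a)}$ preserves $\fg_\a$). As $(\fg,T)$ is a \emph{division} IARA and $\a\neq0$, axiom (IA2)$'$ applied to $0\neq\bar x_j\in\fg_\a$ produces $y\in\fg_{-\a}$ with $0\neq[\bar x_j,y]\in T$; Lemma~\ref{tpialpha} rewrites this as $[\bar x_j,y]=(\bar x_j,y)\,t_\a$, hence $(\bar x_j,y)\neq0$.

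With this $y$, Lemma~\ref{[],()}(ii) gives $(\pi_j(x),\pi_{-j}(y))=(1/m)(\bar x_j,y)\neq0$, the nonvanishing of the form. For the bracket, Lemma~\ref{[],()}(i) gives $[\pi_j(x),\pi_{-j}(y)]=(1/m)\pi([\bar x_j,y])=(1/m)(\bar x_j,y)\,\pi(t_\a)$, and Lemma~\ref{pa} identifies $\pi(t_\a)$ with $t_{\pi(\a)}$; since $\pi(\a)=0$ and $\form$ is nondegenerate on $T^0$ (Proposition~\ref{toralpair}), the only element of $T^0$ representing the zero functional is $0$, so $\pi(t_\a)=0$ and $[\pi_j(x),\pi_{-j}(y)]=0$. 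This settles (i). For (ii), the hypothesis $\pi_j(\fg_\a)\neq\{0\}$ lets me choose $x\in\fg_\a$ with $\pi_j(x)\neq0$, and part (i) supplies the matching $y\in\fg_{-\a}$; I then set $e:=\pi_j(x)$ and $f:=\pi_{-j}(y)$. By~(\ref{pixinpa}), $\pi_j(\fg_\a)\subseteq\fg_{\pi(\a)}^{\bar j}=\fg_{\pi(0)}^{\bar j}$, and applying~(\ref{pixinpa}) with $-\a$ and $-j$ in place of $\a$ and $j$ (using $\pi(-\a)=-\pi(\a)=0$) gives $\pi_{-j}(\fg_{-\a})\subseteq\fg_{\pi(0)}^{-\bar j}$. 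Hence $e\in\fg_{\pi(0)}^{\bar j}$, $f\in\fg_{\pi(0)}^{-\bar j}$, while $[e,f]=0$ and $(e,f)\neq0$ by (i).

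No step is really an obstacle; the only two points that need a word of justification are that $\bar x_j$ genuinely lies in $\fg_\a$ (so that (IA2)$'$ is applicable), which is immediate from~(\ref{xj}), and that $\pi(t_\a)$ is \emph{exactly} $0$ rather than merely isotropic, which is precisely where the hypothesis $\pi(\a)=0$ enters together with nondegeneracy of $\form$ on $T^0$. For context one may note that this lemma is the counterpart, for roots $\a$ with $\pi(\a)=0$, of Lemmas~\ref{beforstar} and~\ref{star}, and it is the mechanism producing the isotropic roots of $\pi(R)$.
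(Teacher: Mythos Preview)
Your proof is correct and follows essentially the same route as the paper's: both pass from $\pi_j(x)\neq0$ to $\bar x_j\neq0$ via Lemma~\ref{pi_-j(y)neq0}(ii), invoke (IA2)$'$ to obtain $y\in\fg_{-\a}$ with $(\bar x_j,y)\neq0$, and then use Lemma~\ref{[],()} together with $\pi(t_\a)=t_{\pi(\a)}=t_0=0$ to conclude. Your write-up is in fact slightly more careful in justifying $\bar x_j\in\fg_\a$ and the memberships $e\in\fg_{\pi(0)}^{\bar j}$, $f\in\fg_{\pi(0)}^{-\bar j}$ via~(\ref{pixinpa}).
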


\begin{proof}
{(i) Let $x\in\fg_\a$ and $\pi_j(x)\not=0$. By Lemma
\ref{pi_-j(y)neq0}(ii), we have $0\not=\bar x_j\in\fg_\a$. Since
(IA2)$'$ holds for $(\fg,T)$, there exists $y \in \fg_{-\a}$ such
that $0 \neq [\bar x_j,y]\in T$. Therefore, by Lemma
\ref{tpialpha}, $(\bar x_j,y)\not=0$.  Now this, together with
Lemma \ref{[],()}(ii), gives
$$(\pi_j (x),\pi_{-j}(y))=(1/m)(\bar x_j,y)\not=0.$$}
{On the other hand, combining Lemmas \ref{[],()}(i),
\ref{tpialpha} and \ref{pa}, we obtain
$$m[\pi_j(x),\pi_{-j}(y)]=\pi_j([\bar x_j,y])=(\bar x_j,y)\pi(t_{\a})=(\bar x_j,y)t_{\pi(\a)}=(\bar
x_j,y)t_0=0.$$}

{(ii) By assumption, $\pi_j(\fg_\a)\not=0$. So $\pi_j(x)\not=0$
for some $x\in\fg_\a$. Now taking $e:=\pi_j(x)\in\fg^{\bar
j}_{\pi(0)}$ and $f:=\pi_{-j}(y)\in\fg^{-\bar j}_{\pi(0)}$ as in
part (i), we are done.}
\end{proof}

As it will be revealed from the sequel, if $\fg_0$ is abelian, the
axioms (A1)-(A4) imposed on the automorphism $\sg$, are enough for
our purposes in this work. However, this is not the case for a
general IARA. To be more precise, we note that the main difference
of the class of invariant affine reflection algebras with extended
affine Lie algebras {or} locally extended
affine Lie algebras, 
is that in the latter ones, the subspaces $T$ and $\fg_0$
coincide, while in an IARA, $T$ might be a proper subspace of
$\fg_0$. This in particular, forces $\fg_0$ not to be necessarily
abelian. In this case, to have a control on the action of $\sg$ on
the pair $(\fg_0,T)$, we need the following ``tameness condition''
whose offshoot is given in Lemma \ref{afterstar}. {\begin{itemize}
\item[(A5)] If $\{0\}\not=\fg_{\pi(0)}^{\bar
    j}\subseteq\fg_0$, then $T^{\bar j}\not=\{0\}$,
    $j\in\Z$.
\end{itemize}}

\begin{lem}\label{afterstar}
  {Suppose $\sg$ satisfies (A1)-(A4). Also suppose that  $\fg_0$ is abelian or  (A5) holds for  $\sg.$ If $j \in \Z$ and $\fg_{\pi(0)}^{\bar j} \neq\{ 0\}$,
  then there exist $e\in\fg_{\pi(0)}^{\bar j}$ and $f\in\fg_{\pi(0)}^{-\bar j}$
  such that $[e,f]=0$, but $(e,f)\not=0$.}
\end{lem}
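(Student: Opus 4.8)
The plan is to split the argument according to the two hypotheses ``$\fg_0$ abelian'' or ``(A5) holds'', and to reduce the second case essentially to a consequence of the first. First I would dispose of the \emph{abelian} case. Since $\fg_{\pi(0)}^{\bar j}\neq\{0\}$, by \eqref{gpabarj} (or Lemma~\ref{templem2}(ii)) we may write a nonzero element of $\fg_{\pi(0)}^{\bar j}$ as $\pi_j(x)$ for some $x\in\fg_\a$ with $\a\in R\setminus\{0\}$, $\pi(\a)=0$, or else $\fg_{\pi(0)}^{\bar j}$ already meets $\fg_0$. In the former subcase Lemma~\ref{(pij,pi-j)neq0}(ii) directly produces $e\in\fg_{\pi(0)}^{\bar j}$ and $f\in\fg_{\pi(0)}^{-\bar j}$ with $[e,f]=0$ but $(e,f)\neq0$, and we are done. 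In the latter subcase we have $\{0\}\neq\fg_{\pi(0)}^{\bar j}\cap\fg_0$; pick $0\neq e$ here. Because $\fg_0$ is abelian and the form restricted to $\fg_0\oplus\fg_0$ is nondegenerate by \eqref{newtempeq5} (with $\a=0$), and moreover the form is $\Z_m$-graded by \eqref{newform}, there is $f\in\fg_0^{-\bar j}$ with $(e,f)\neq0$; then $f\in\fg_{\pi(0)}^{-\bar j}$ since $\fg_0\subseteq C_\fg(T)\subseteq C_\fg(T^0)=\fg_{\pi(0)}$, and $[e,f]=0$ since $\fg_0$ is abelian. That settles the abelian case.

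Next I would handle the case where (A5) holds for $\sigma$. Write $\fg_{\pi(0)}^{\bar j}=(\fg_{\pi(0)}^{\bar j}\cap\fg_0)\oplus V$ where $V$ is the span of the $\pi_j(\fg_\a)$ over $\a\in R\setminus\{0\}$ with $\pi(\a)=0$; more precisely, decompose using \eqref{gpabarj}. If some $\pi_j(\fg_\a)\neq\{0\}$ with $\a\neq0$ and $\pi(\a)=0$ contributes, then again Lemma~\ref{(pij,pi-j)neq0}(ii) gives the desired pair $(e,f)$ and we are finished. So the only remaining possibility is $\fg_{\pi(0)}^{\bar j}\subseteq\fg_0$, i.e.\ $\{0\}\neq\fg_{\pi(0)}^{\bar j}\subseteq\fg_0$. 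Now hypothesis (A5) gives $T^{\bar j}\neq\{0\}$, hence also $T^{-\bar j}\neq\{0\}$ since $\sigma$ preserves the nondegenerate graded form on $T$ (the pairing of $T^{\bar j}$ with $T^{-\bar j}$ is nondegenerate). Choose $0\neq e\in\fg_{\pi(0)}^{\bar j}$. Since $e\in\fg_0$ and \eqref{newtempeq5} gives nondegeneracy of the form on $\fg_0\oplus\fg_0$, together with $\Z_m$-gradedness there is $f\in\fg_0^{-\bar j}$ with $(e,f)\neq0$; then $f\in\fg_{\pi(0)}^{-\bar j}$. It remains to arrange $[e,f]=0$. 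Here I would argue that $[e,f]\in T^0$: indeed $[e,f]\in[\fg_0,\fg_0]\cap\fg^{-\bar j+\bar j}=[\fg_0,\fg_0]$, and since $e,f$ lie in $\fg_{\pi(0)}=C_\fg(T^0)$, one checks $[e,f]\in C_\fg(T^0)$, and a toral-type argument (as in Proposition~\ref{toralpair}, using Lemma~\ref{tpialpha} with $\a=0$, so $t_0=0$) forces $[e,f]=(e,f)t_0=0$ \emph{provided} $[e,f]\in T^0$; the point of (A5) and passing to $T^{\bar j}$ is precisely to see that the bracket of homogeneous-degree pieces of $\fg_0$ sitting over $T$ lands in $T$.

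The main obstacle, and the step I expect to require the most care, is exactly this last point: showing $[e,f]\in T^0$ (equivalently $[\fg_{\pi(0)}^{\bar j},\fg_{\pi(0)}^{-\bar j}]\cap[\fg_0,\fg_0]\subseteq T$) in the non-abelian case. In a general IARA $\fg_0$ need not be abelian and $[\fg_0,\fg_0]$ need not lie in $T$, so we cannot conclude $[e,f]=0$ for free; this is why the excerpt singles out (A5) as a ``tameness condition''. The natural strategy is: replace $f$ by its $T$-component relative to a decomposition $\fg_0=T\oplus(\text{complement})$ that is $\sigma$-stable (possible by (A2) and complete reducibility of the finite-cyclic-group action, cf.\ the idempotents $e_i$ from \eqref{chi}), exploit that $[\fg_0,T^0]=0$ to see the complement part of $f$ still pairs nontrivially with $e$, and then use (IA2)$'$ / Lemma~\ref{tpialpha} applied inside the toral pair $(\fg,T)$ at the zero root to control the bracket. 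I would expect to combine this with the observation that $(e,f)\neq0$ with $f\in T^{-\bar j}$ is achievable because $T^{\bar j}\neq\{0\}$ and the form on $T$ is nondegenerate and graded, so that after the reduction we land on $e,f\in T$, whence $[e,f]=0$ automatically (as $T$ is abelian) while $(e,f)\neq0$. Modulo correctly carrying out this $\sigma$-equivariant splitting of $\fg_0$, the rest is routine bookkeeping with the gradings already established in Sections~\ref{gradings} and \ref{toral-pairs-auto}.
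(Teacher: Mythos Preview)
Your treatment of the case where some $\pi_j(\fg_\a)\neq\{0\}$ with $\a\neq 0$ and $\pi(\a)=0$ is fine and matches the paper: Lemma~\ref{(pij,pi-j)neq0} handles it. Your abelian-$\fg_0$ subcase is also essentially correct. The problem is your handling of the (A5) case when $\{0\}\neq\fg_{\pi(0)}^{\bar j}\subseteq\fg_0$.

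You begin by choosing an \emph{arbitrary} $0\neq e\in\fg_{\pi(0)}^{\bar j}\subseteq\fg_0$, then find $f\in\fg_0^{-\bar j}$ with $(e,f)\neq 0$, and only afterwards try to force $[e,f]=0$. This is where the argument stalls: for a generic pair $e,f\in\fg_0$ there is no reason for $[e,f]$ to lie in $T$ (so Lemma~\ref{tpialpha} does not apply), and your proposed fix of projecting $f$ onto a $\sigma$-stable complement of $T$ in $\fg_0$ does not preserve the condition $(e,f)\neq 0$; indeed, an arbitrary $e\in\fg_0^{\bar j}$ may well be orthogonal to all of $T^{-\bar j}$. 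The sentence ``exploit that $[\fg_0,T^0]=0$ to see the complement part of $f$ still pairs nontrivially with $e$'' is the wrong direction---you would need the $T$-part, not the complement part, to pair nontrivially, and that is exactly what can fail.

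The paper avoids this entirely by noting that the lemma only asks for the \emph{existence} of some $e,f$. Since (A5) gives $T^{\bar j}\neq\{0\}$, one simply chooses $e\in T^{\bar j}\subseteq\fg_{\pi(0)}^{\bar j}$ from the outset; then nondegeneracy of the $\Z_m$-graded form on $T$ yields $f\in T^{-\bar j}$ with $(e,f)\neq 0$, and $[e,f]=0$ because $T$ is abelian. You actually arrive at ``$e,f\in T$'' at the very end of your sketch, but as the conclusion of a delicate reduction rather than as the one-line starting observation it should be. The role of (A5) is precisely to make this immediate choice available; no $\sigma$-equivariant splitting of $\fg_0$ or control of $[\fg_0,\fg_0]$ is needed.
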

\begin{proof}
{Assume $j\in\Z$ and $\fg^{\bar j}_{\pi(0)}\not=\{0\}$. By
(\ref{sumgalpha}), $\fg^{\bar j}_{\pi(0)}=\sum_{\{\a\in R\mid
\pi(\a)=0\}}\pi_j(\fg_\a)$. If $\pi_j(\fg_\a) \neq 0$ for some
nonzero root $\a$ with $\pa=0$, we are done by
Lemma~\ref{(pij,pi-j)neq0}. Otherwise, $\{0\}\not=\fg^{\bar
j}_{\pi(0)}=\fg^j_0=\pi_j(\fg_0)\subseteq\fg _0$. Now if $\fg_0$
is abelian, then since $\form$} {is nondegenerate and
$\Z_m$-graded on $\fg_0$, there exists $e \in \fg_0^{\bar j}$ and
$f \in \fg_0^{-\bar j}$ such that
  $(e,f)\neq 0$ but as $\fg_0$ is abelian $[e,f]=0$. If (A5) holds, then, $T^{\bar
j}=\pi_j(T) \neq 0$. Since $\form$ is nondegenerate and
  $\Z_m$-graded on $T$, there exist $e \in T^{\bar j}$ and $f \in T^{-\bar j}$ such that
  $(e,f)\neq 0$ but as $T$ is abelian $[e,f]=0$.}
\end{proof}

{Assumption (A5) (Lemma \ref{afterstar}) will be used to prove
condition (IA2) holds for a Lie algebra $\hat\fg$ which will be
introduced in Section~\ref{extended affinization}.}

\section{\bf Fixed point subalgebras of IARA's}\setcounter{equation}{0}
An interesting subject of research on algebras is the study of
subalgebra of points which are fixed by certain types of
automorphisms. The starting point of such a study, in our context,
is the work of Borel and Mostow \cite{bm} on semisimple Lie
algebras. They showed that the subalgebra of fixed points of a
finite order automorphism of a semisimple Lie algebra is a
reductive Lie algebra. Motivated by this work, in \cite{aby}, the
authors  showed that the fixed point subalgebra of an extended
affine Lie algebra is a sum of extended affine Lie algebras (up to
existence of some isolated root spaces), a subspace of the center
and a subspace which is contained in the centralizer of the core.
They also showed that the core of the fixed point subalgebra
modulo its center is isomorphic to the direct sum of the cores
modulo centers of the involved summands. In \cite{ak}, the authors
did a similar study on the fixed points  of a Lie torus under
certain automorphism and obtained some similar results. In
\cite{youfixedpoint}, the author considered the same study {in} a
rather more general context, namely root graded Lie algebras. She
proved that the core of the subalgebra of fixed points of a root
graded Lie algebra under a suitable automorphism is the sum of a
root graded Lie algebra $\mathcal L$ and a subspace $\mathcal K$
whose normalizer contains $\mathcal L$.

 {We now consider the same question for an IARA, namely what is the structure of fixed points
  of a division IARA $(\fg, T)$ under an automorphism $\sg$ satisfying axioms (A1)-(A4).
  We will show, using the results of the previous
  sections, that this subalgebra is a division IARA with toral
  subalgebra $T^0$. Since conditions (A1)-(A4) introduced in \cite{aby} and \cite{ak}
  coincide with conditions (A1)-(A4) given here, the following theorem generalizes and at the same time gives a new
  perspective to some of the results there.}

\begin{thm}\label{g0t0isiara}
  Let $(\fg, T)$ be a division IARA with corresponding root system $R$ and bilinear form $\form$. Suppose $\sigma$
  is an automorphism of $\fg$ satisfying (A1)-(A4) and $\fg^0$ (resp. $T^0$) is the set of fixed points of
  $\sigma$ on $\fg$ (resp. $T$). Then $(\fg^0, T^0)$  is a division IARA with root system
  \begin{equation}\label{rsg}
  R^\sg:=\{\pi(\a)\mid\a\in R,\;\fg^0_{\pi(\a)}\not=0\}.
  \end{equation}
  In particular, $R^\sg$ is an affine reflection system.
\end{thm}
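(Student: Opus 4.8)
The plan is to derive Theorem~\ref{g0t0isiara} as a ``restriction to the fixed subalgebra'' of the results of Section~\ref{division IARA}, rather than reproving everything from scratch. By Theorem~\ref{gt0isiara} we already know $(\fg,T^0)$ is an IARA with root system $\pi(R)$, and by Proposition~\ref{toralpair} and the compatible $(\langle\pi(R)\rangle,\Z_m)$-grading of Lemma~\ref{QpiR,Hgraded}, the degree-zero part $\fg^0=\bigoplus_{\a\in R}\fg^0_{\pi(\a)}$ is a graded subalgebra of $(\fg,T^0)$ in which $T^0$ sits. First I would verify that $(\fg^0,T^0)$ is a toral pair: since $T^0\subseteq\fg^0$ and the $\pi(R)$-grading on $\fg$ restricts to $\fg^0$ (the adjoint action of $T^0$ stabilizes each $\fg^h$, in particular $\fg^0$), we get $\fg^0=\bigoplus_{\pi(\a)\in\pi(R)}\fg^0_{\pi(\a)}$, and the nonzero summands are exactly those indexed by $R^\sg$ as in~(\ref{rsg}); so $R^\sg$ is the root system of $(\fg^0,T^0)$ and $0\in R^\sg$.

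Next I would check the three IARA axioms for $(\fg^0,T^0)$. For (IA1): the form $\form$ restricted to $\fg^0$ is symmetric and invariant, and by~(\ref{newform}) it is $\Z_m$-graded, so $\fg^0=\fg^{\bar 0}$ is a nondegenerate subspace (it pairs only with itself among the $\fg^{\bar j}$); its restriction to $T^0=T^{\bar 0}$ is nondegenerate for the same reason applied to the $\Z_m$-graded nondegenerate form on $T$. For (IA3): it is inherited verbatim, since for $\pi(\a)\in R^\sg$ with $(\pi(\a),\pi(\a))\neq0$ and $x_{\pi(\a)}\in\fg^0_{\pi(\a)}\subseteq\fg_{\pi(\a)}$, the map $\ad x_{\pi(\a)}$ is already locally nilpotent on all of $\fg$ by Theorem~\ref{gt0isiara}, hence a fortiori on the invariant subspace $\fg^0$. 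That leaves (IA2)$'$, the division axiom, which is the real content.

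The main obstacle is therefore establishing (IA2)$'$ for $(\fg^0,T^0)$: given $\pi(\a)\in R^\sg\setminus\{0\}$ and an arbitrary nonzero $e\in\fg^0_{\pi(\a)}=\fg^{\bar 0}_{\pi(\a)}$, I must produce $f\in\fg^0_{-\pi(\a)}$ with $0\neq[e,f]\in T^0$. Here I would split into two cases according to whether $\pi(\a)\in\pi(R)^\times$ or $\pi(\a)\in\pi(R)^0$. If $\pi(\a)$ is nonisotropic, I invoke Lemma~\ref{star} (with $\bar j=0$): writing $e=\pi_0(x_1)+\cdots+\pi_0(x_n)$ with the $x_i\in\fg_{\a_i}$ in distinct $\sigma$-orbits and $\pi(\a_i)=\pi(\a)$ via Lemma~\ref{templem2}, some $\pi_0(x_i)\neq0$, and Lemma~\ref{beforstar} gives $y\in\fg_{-\a_i}$ with $0\neq[\pi_0(x_i),\pi_0(y)]\in T^0$; by Lemma~\ref{[pi_j,pi_-j]}(iii) the cross terms $[\pi_0(x_k),\pi_0(y)]$ for $k\neq i$ vanish (distinct orbits, same $\pi$), so $f:=\pi_0(y)\in\fg^{\bar 0}_{-\pi(\a)}=\fg^0_{-\pi(\a)}$ works. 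If $\pi(\a)$ is isotropic but $\pi(\a)\neq0$, the same orbit decomposition applies but now I need $0\neq[e,f]\in T^0$ with $t_{\pi(\a)}$ possibly zero; I would argue that since $\fg^0_{\pi(\a)}\neq0$ and the $\Z_m$-graded form pairs $\fg^0_{\pi(\a)}$ nondegenerately with $\fg^0_{-\pi(\a)}$ (by~(\ref{newtempeq5}) and Lemma~\ref{piR-grading}), one can first pick $f_0$ with $(e,f_0)\neq0$, then apply Lemma~\ref{tpialpha} and the toral-pair structure of Proposition~\ref{toralpair} to replace $e,f_0$ by genuine sl$_2$-data as in Remark~\ref{sl2}; one must check that $t_{\pi(\a)}\neq0$ in this case, which follows because $\nu$ is injective on $T^0$ with image containing $\mathrm{span}\,\pi(R)$ (Proposition~\ref{toralpair} together with the discussion after (IA1)--(IA2)), so $\pi(\a)\neq0$ forces $t_{\pi(\a)}\neq0$ — hence this case in fact reduces to the nonisotropic argument once we observe $(\pi(\a),\pi(\a))$ plays no role in Lemma~\ref{beforstar}. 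Finally, ``$R^\sg$ is an affine reflection system'' follows from the now-established IARA structure by~\cite[Theorem 6.8]{nehersurvey}, and I would close by noting the indecomposability transfer is not claimed here because $R^\sg$ may be strictly smaller than $\pi(R)$, so no appeal to Lemma~\ref{newtemplem6} is needed.
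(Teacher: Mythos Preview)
Your proof is correct and follows essentially the same route as the paper: establish the toral pair via Lemma~\ref{QpiR,Hgraded}, get (IA1) from the $\Z_m$-gradedness of the form (\ref{newform}), inherit (IA3) from Theorem~\ref{gt0isiara}, and deduce (IA2)$'$ from Lemma~\ref{star} with $\bar j=0$. The paper does exactly this, but dispatches (IA2)$'$ in a single sentence by citing Lemma~\ref{star}.

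Your treatment of (IA2)$'$ is needlessly complicated. Lemma~\ref{star} is stated for \emph{any} $\alpha\in R$ with $\pi(\alpha)\neq 0$ --- there is no isotropy hypothesis --- so the case split into $\pi(\alpha)\in\pi(R)^\times$ versus $\pi(\alpha)\in\pi(R)^0\setminus\{0\}$ is superfluous. Your intermediate argument in the isotropic case (pick $f_0$ with $(e,f_0)\neq 0$, then appeal to Remark~\ref{sl2}) is in fact misdirected: Remark~\ref{sl2} explicitly assumes $\pi(\alpha)\in\pi(R)^\times$, and more importantly Lemma~\ref{tpialpha}/Proposition~\ref{toralpair} only apply once you already know $[e,f_0]\in T^0$, which is precisely what you are trying to establish. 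You rescue yourself at the end by observing that Lemma~\ref{beforstar} never used $(\pi(\alpha),\pi(\alpha))\neq 0$, and that observation is correct --- but it is already built into the statement of Lemma~\ref{star}, so you should simply invoke that lemma once, for all nonzero $\pi(\alpha)$, and be done.
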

\begin{proof}
  By Lemma~\ref{QpiR,Hgraded},
  $$\fg^0 = \bigoplus_{\pa\in \pi(R)} \fg^0_{\pa}=\bigoplus_{\tilde\a\in R^\sg}\fg^0_{\tilde\a}$$
  where $R^\sg$ is given by (\ref{rsg}). So $(\fg^0,T^0)$ is a toral pair.
  In addition, since by (\ref{newform}) the form $\form$
  is $\Z_m$-graded on $\fg$, it is nondegenerate  on both $\fg^0$ and $T^0$, therefore (IA1) holds.
 Also  (IA2)$'$ holds by Lemma~\ref{star}. Next let $\a\in R^\sigma$ with $(\pi(\a),\pi(\a))\not=0$, and
  $x\in \fg^0_{\pi(\a)}$. By Theorem \ref{gt0isiara}, $(\fg, T^0)$ is an IARA and so (IA3) holds for $(\fg, T^0)$.
Therefore as $\fg^0_{\pi(\a)}\sub\fg_{\pi(\a)}$, $\ad x$ is
locally nilpotent on $\fg$ and so on $\fg^0$. This shows that
(IA3) holds for $(\fg^0,T^0)$ and so $(\fg^0, T^0)$ is a division
IARA. Now $R^\sg$ as the root system of an IARA is an affine
reflection system.
\end{proof}

\begin{rem}\label{newrem-5}
By Theorems \ref{gt0isiara} and \ref{g0t0isiara}, both $\pi(R)$
and $R^\sg$ are affine reflection systems with $R^\sg\sub\pi(R)$.
It is shown in \cite{aby} that $R^\sg$ might be a proper subset of
$\pi(R)$, and in fact in many examples this is the case. It is
worth mentioning that $R^\sg$ and $\pi(R)$ might not be
necessarily of the same type, see \cite[Example 3.70]{aby}.
\end{rem}

\section{\bf Extended Affinization}\setcounter{equation}{0}\label{extended affinization}
 In this section, we study {\it extended affinization,} a process
in which starting from an IARA $\fg$ with root system $R$ and  a
finite order automorphism of $\fg$, we get a new IARA whose root
system is  an extension of $\pi(R)$ (see (\ref{newpi})). The
notion of {\it affinization} was initiated by V. Kac~\cite{kac} in
order to realize affine Kac-Moody Lie algebras. Since then, this
method has been used by different authors to realize certain
generalizations of affine Lie algebras, e.g. in
\cite{abpcovering1}, the authors use this method to realize
extended affine Lie algebras, also  in \cite{abfpgs} and
\cite{abfpeala}, using this method, the authors realize Lie tori.

Throughout this section,  $(\fg, T)$ is an IARA with root system
$R$, $\sigma$ an automorphism of
  $\fg$ satisfying (A1)-(A3), and $T^0$ the set of fixed points of $\sigma$ on $T$.
We recall that for  $\alpha\in R$,  $\pa$ is  the restriction of
$\alpha$ to ${T^0}$ and that we have a
$(\langle\pi(R)\rangle,\Z_m)$-grading on $\fg$ as in
Lemma~\ref{QpiR,Hgraded}.
  Suppose $\Lambda$ is a torsion free abelian group and let $\rho:\Lambda\lra \Z_m$ be a group epimorphism. For $\lambda\in\Lambda,$ we take $\bar{\lambda}:=\rho(\lambda).$

  Suppose $\CA$ is a unital commutative associative algebra. In addition,
   suppose $\CA = \bigoplus_{\lambda \in \Lambda} \CA^\lambda$ is predivision $\Lambda$-graded. It is easy to see that in this case
   $\supp_\Lam(\CA)$ is a subgroup of $\Lam.$ Since the $\Lam$-grading of $\fg$ depends only on  $\supp_\Lam(\CA),$ we may assume without loss of
   generality that
$\Lam=\supp_\Lam(\CA)$, that is,
  \begin{equation}\label{newtempeq12}
  \CA^\lam\not=\{0\}\quad\hbox{for all}\quad \lam\in\Lam.
  \end{equation}
  Further assume  that $\CA$
admits a $\Lambda$-graded  invariant nondegenerate   {symmetric}
bilinear form $\epsilon$, where ``invariant''
  means $\epsilon(ab,c)=\epsilon(a,bc)$ for all $a,b,c\in\CA$. In addition{,} we assume that
\begin{equation}\label{newtempeq6}
  \epsilon(1,1)\not=0.
\end{equation}
One gets using this that   $\epsilon(a,a^{-1})\not=0$ for all
invertible elements $a\in\CA$ as  the form is invariant. We now
consider the \la  $\fg \otimes \CA$ with multiplication defined by
  \[ [x \otimes a, y \otimes b ] = [x,y] \otimes ab\]
for every $x,y \in \fg$ and $ a,b \in \CA$.
  Now define a form on $\fg\otimes \CA$ {by linear extension} of
\begin{equation}\label{newtempeq8}
 (x \otimes a, y \otimes b ) = (x,y) \epsilon(a,b),
 \end{equation}
  for  $x,y \in \fg$ and $ a,b \in \CA$.  It is easy to see that this form is  a $\Lam$-graded invariant symmetric  bilinear form on ${\fg} \otimes \CA$.

  The following is a slight generalization of  \cite[Definition 3.1.1]{abfpgs}.
\begin{DEF}\label{loopalgebra}
The subalgebra
  \[ \tilde\fg := L_\rho(\fg, \CA):= \bigoplus_{\lambda \in \Lambda} (\fg^{\bar{\lambda}} \otimes \CA^\lambda)\]
of $\fg \otimes \CA$  is called the {\it loop algebra} of $\fg$
relative to $\rho$ and $\CA$. In the case that $\rho=0,$ we denote
$L_\rho(\fg,\CA)$ by
 $L(\fg,\CA)$ and  note that  $L(\fg,\CA)=\fg\otimes\CA$.
\end{DEF}
From definition, it is clear that $\tilde{\fg}$  is a
$\Lambda$-graded Lie algebra with homogenous spaces
$\tilde{\fg}^\lambda:=\fg^{\bar\lambda}\otimes\CA^\lambda$,
$\lambda\in\Lambda$.

In the following lemma, we make use of a fact from linear algebra,
namely if $V$ is a vector space equipped with a nondegenerate
symmetric bilinear from and $W$  a finite dimensional subspace of
$V$, then there is a finite dimensional subspace $U$ of $V$
containing $W$ such that the form restricted to $U$ is
nondegenerate (for a proof
 see \cite[Lemma 3.6]{myleala}).

\begin{lem}\label{tildefgform}
The form on $\fg \otimes \CA$ restricted to $\tilde{\fg}$ is a
$\Lam$-graded invariant nondegenerate symmetric  bilinear form.
\end{lem}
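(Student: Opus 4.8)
The plan is to verify the three claimed properties of the restricted form separately: it is $\Lambda$-graded, invariant, and symmetric by inheritance, so that the only real content is nondegeneracy. First I would note that $\tilde\fg=\bigoplus_{\lambda\in\Lambda}\tilde\fg^\lambda$ with $\tilde\fg^\lambda=\fg^{\bar\lambda}\otimes\CA^\lambda$, and that the ambient form $(x\otimes a,y\otimes b)=(x,y)\epsilon(a,b)$ is already known (from the paragraph preceding the lemma) to be a $\Lambda$-graded invariant symmetric bilinear form on $\fg\otimes\CA$. Restricting a symmetric invariant form to a subalgebra keeps it symmetric, and keeps it invariant because $\tilde\fg$ is closed under its own bracket; gradedness is automatic since $\tilde\fg$ is a graded subalgebra and $(\tilde\fg^\lambda,\tilde\fg^\mu)\subseteq(\fg^{\bar\lambda}\otimes\CA^\lambda,\,\fg^{\bar\mu}\otimes\CA^\mu)$, which already vanishes unless $\lambda+\mu=0$ because $\epsilon$ is $\Lambda$-graded. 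So all of that is essentially bookkeeping.

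The main obstacle is nondegeneracy, and the plan is to prove it $\Lambda$-homogeneously: since the form is $\Lambda$-graded, it suffices to show that for each $\lambda\in\Lambda$ the pairing between $\tilde\fg^\lambda=\fg^{\bar\lambda}\otimes\CA^\lambda$ and $\tilde\fg^{-\lambda}=\fg^{-\bar\lambda}\otimes\CA^{-\lambda}$ is nondegenerate. Fix $0\neq z\in\fg^{\bar\lambda}\otimes\CA^\lambda$. Because $\CA$ is predivision $\Lambda$-graded, the homogeneous space $\CA^\lambda$ contains an invertible element $a$, with inverse $a^{-1}\in\CA^{-\lambda}$ and $\epsilon(a,a^{-1})\neq0$ (the consequence of $\epsilon(1,1)\neq0$ and invariance of $\epsilon$ noted just before Definition~\ref{loopalgebra}). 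I would like to write $z=\sum_i x_i\otimes a_i$ and reduce to a single tensor by replacing the $a_i$ with multiples of $a$; more precisely, using $\CA^\lambda=\CA^\lambda\cdot 1$ and that multiplication by $a^{-1}$ maps $\CA^\lambda$ into $\CA^0$, one can pull the scalar structure through. The clean way: the map $x\otimes c\mapsto x\otimes ca^{-1}$ identifies $\fg^{\bar\lambda}\otimes\CA^\lambda$ with $\fg^{\bar\lambda}\otimes\CA^0$ compatibly with pairing against $\fg^{-\bar\lambda}\otimes\CA^{-\lambda}\cong\fg^{-\bar\lambda}\otimes\CA^0$ after a similar twist, and $\epsilon$ restricted to $\CA^0\times\CA^0$ is nondegenerate (again by gradedness and overall nondegeneracy of $\epsilon$). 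I expect the remaining point to be that $(x,y)$ is nondegenerate between $\fg^{\bar\lambda}$ and $\fg^{-\bar\lambda}$: this follows from (IA1) together with (\ref{newform}), i.e. the form on $\fg$ is $\Z_m$-graded and nondegenerate, hence nondegenerate on each $\fg^{\bar\lambda}\oplus\fg^{-\bar\lambda}$.

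Here a subtlety arises: $\fg^{\bar\lambda}$ and $\CA^\lambda$ may be infinite-dimensional, so I cannot just pick dual bases. The fix is the linear-algebra fact cited in the paragraph preceding the lemma (from \cite[Lemma 3.6]{myleala}): given a nonzero $z$, it lies in a finite-dimensional subspace $W$ of $\fg^{\bar\lambda}\otimes\CA^\lambda$, and I can enlarge $W$ inside $\fg\otimes\CA$ to a finite-dimensional $U$ on which the ambient form is nondegenerate. But I want the partner in $\tilde\fg$, not merely in $\fg\otimes\CA$. So the better route is to do the reduction to $\fg^{\bar\lambda}\otimes\CA^0$ first, then apply the finite-dimensional argument inside $\fg^{\bar\lambda}\oplus\fg^{-\bar\lambda}$ and $\CA^0$ using that both carry nondegenerate forms: choose $x\otimes c$ with $c\in\CA^0$ a component of $z$'s image that is nonzero, find $x'\in\fg^{-\bar\lambda}$ with $(x,x')\neq0$ via (\ref{newform}) enlarging to a finite-dimensional nondegenerate subspace if needed, and find $c'\in\CA^0$ with $\epsilon(c,c')\neq0$; twisting $c'$ back by $a$ gives the required partner in $\fg^{-\bar\lambda}\otimes\CA^{-\lambda}\subseteq\tilde\fg$. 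Finally I would double-check the easy claims once more and assemble: $\Lambda$-graded (done), symmetric (done), invariant (done), nondegenerate (just argued), completing the proof.
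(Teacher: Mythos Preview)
Your outline handles the symmetry, invariance, and $\Lambda$-gradedness correctly, and you correctly identify that the only content is nondegeneracy, reduced to each homogeneous pair $\tilde\fg^\lambda\times\tilde\fg^{-\lambda}$. The twist by an invertible $a\in\CA^\lambda$ is a legitimate reduction (and uses the predivision hypothesis, which the paper's own argument does not need), but it does not remove the essential difficulty, and your final step has a genuine gap.

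The problem is this. After the twist, a general nonzero element still has the form $z'=\sum_{i=1}^n x_i\otimes c_i$ with, say, $\{c_i\}\subseteq\CA^0$ linearly independent and $x_i\in\fg^{\bar\lambda}$. You propose to ``choose a component $x\otimes c$'', find $x'\in\fg^{-\bar\lambda}$ with $(x,x')\neq 0$, and $c'\in\CA^0$ with $\epsilon(c,c')\neq 0$. But pairing $z'$ with $x'\otimes c'$ gives
\[
(z',x'\otimes c')=\sum_{i=1}^n (x_i,x')\,\epsilon(c_i,c'),
\]
and nothing in your construction prevents the cross-terms from cancelling the chosen term. Merely having $(x,x')\neq 0$ and $\epsilon(c,c')\neq 0$ is not enough.

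What is missing is an \emph{isolation} step. The paper achieves it on the $\CA$-side: starting from $\tilde x=\sum_i x_i\otimes a_i$ with $\{a_i\}\subseteq\CA^\lambda$ linearly independent, one uses the cited finite-dimensional enlargement lemma on $\CA^\lambda\oplus\CA^{-\lambda}$ to produce elements $\bar b_1,\ldots,\bar b_n\in\CA^{-\lambda}$ with $\epsilon(a_i,\bar b_j)=\delta_{ij}$. Then $(\tilde x,\,y\otimes\bar b_j)=(x_j,y)$ singles out $x_j$, and any $y\in\fg^{-\bar\lambda}$ with $(x_j,y)\neq 0$ (guaranteed by (\ref{newform})) finishes the argument. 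You could equally well run the isolation on the $\fg$-side by finding $x'\in\fg^{-\bar\lambda}$ with $(x_i,x')=\delta_{ij}$, but either way you must actually produce the partial dual system, not just a single partner. Your phrase ``enlarging to a finite-dimensional nondegenerate subspace if needed'' hints at the right tool, but you apply it only to obtain $(x,x')\neq 0$, which does not require that lemma and does not kill the other terms.
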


\begin{proof} As we have seen above the form on $\fg\otimes\CA$ is
$\Lam$-graded symmetric and invariant. So it remains to prove the
nondegeneracy of the form. Since $\form$ is $\Lam$-graded on
$\tilde\fg$,
 it is enough to show that for fixed $\lam\in\Lam$ and $0\not=\tilde x\in\fg^{\bar\lambda}\otimes\CA^\lam$,
 there exists $\tilde y\in \fg^{-\bar\lambda} \otimes \CA^{-\lambda}$ such that
 $(\tilde x,\tilde y)\not=0$. Now we may write  $\tilde x=\sum_{i=1}^n x_i\otimes a_i$,
 where $\{a_1,\ldots,a_n\}$ is a linearly independent subset of $\CA^\lam$ and $x_i\in\fg^{\bar \lam}$ for  $1\leq i\leq n$.
 Since $\ep$ is nondegenerate on $\CA^\lam\oplus\CA^{-\lam}$, there exists a finite dimensional subspace
 $X$ of $\CA^\lam\oplus\CA^{-\lam}$ such that $\{a_1,\ldots,a_n\}\sub X$
 and that the form restricted to $X$ is nondegenerate.
Extend $\{a_1,\ldots,a_n\}$ to a basis
$\{a_1,\ldots,a_n,a_{n+1},\ldots, a_m\}$ of $X$. Now as $\ep$ is
nondegenerate on $X$, there exist $b_1,\ldots,b_m\in X$ such that
$\ep(a_i,b_j)=\delta_{ij}$ for all $i,j$. For $1\leq j\leq n$, let
${\bar b}_j$ be the projection of $b_j$ into $\CA^{-\lam}$ with
respect to the decomposition $\CA^\lam\oplus\CA^{-\lam}$. Since
$\ep$ is $\Lam$-graded and $a_1,\ldots,a_n\in\CA^\lam$, we have
\begin{equation*}
(a_i,{\bar b}_j)=(a_i,b_j)=\delta_{i,j}\quad\hbox{for all}\quad
1\leq i,j\leq n.
\end{equation*}
 Now $x_j\not=0$ for some $j$, as $\tilde x \not=0$.
 Since $\form$ is nondegenerate and $\Z_m$-graded on
 $\fg^{\bar\lambda}\oplus\fg^{-\bar\lambda}$, there exists
 $y_j\in \fg^{-\bar\lambda}$ such that $(x_j,y_j)\neq 0$. So, setting
 $\tilde y:=y_j\otimes {\bar b}_j$, we have
\begin{eqnarray*}
  (\tilde x,\tilde y)&=&(\sum_{i = 1}^n x_i \otimes a_i,y_j \otimes {\bar b}_j)\\
  &=& \sum_{i = 1}^n (x_i,y_j)\epsilon(a_i,{\bar b}_j)\\
                                                  &=& (x_j,y_j)\epsilon(a_j,{\bar b}_j)\\
                                                  &=& (x_j,y_j) \neq 0,
\end{eqnarray*}
as required. This shows that the form on $\tilde\fg$ is
nondegenerate.
  \end{proof}
Next  suppose  $\lambda \in \Lambda,$ then by Proposition
\ref{toralpair} and (\ref{final2}), we have
\begin{equation}\label{gg}
     \tilde{\fg}^\lambda = \fg^{\bar{\lambda}} \otimes \CA^\lambda = \bigoplus_{\pi(\alpha) \in \pi(R)}
     ( \fg^{\bar \lambda}_{\pi(\alpha)} \otimes \CA^\lambda ).
\end{equation}
Now we set
 $$\tilde{T}{: =} T^0 \otimes 1.$$ Then for $\alpha \in R, \pa$  can be considered as an element
of $\left.\tilde{T}\right.^\star$
   by linear extension of $\pa(t \otimes 1) = \alpha(t)$ for  $t \in T^0$. We consider the adjoint action of
 $\tilde{T}$ on $\tilde{\fg}$. Suppose $t \in T^0, x \in \fg^{\bar{\lambda}}$ and $a \in \CA^\lambda$, for some $ \lambda \in \Lambda$. We have
   \[ [t \otimes 1, x \otimes a] = [t, x] \otimes a \in \fg^{\bar{\lambda}} \otimes \CA^\lambda. \]
   So the adjoint action of $\tilde{T}$ on $\tilde{\fg}$ stabilizes $\fg^{\bar{\lambda}} \otimes \CA^\lambda$.
   Define, for $\a\in R$,
   \[ \tilde\fg_{\pi(\alpha)} {:=} \{ x \in \tilde\fg \mid [\tilde{t},x] = \pa(\tilde{t}) x \;  \mbox{for all} \; \tilde{t}  \in \tilde T \}. \]
   Then  it is easy to check that $\fg_\pa^{\bar \lambda} \otimes \CA^\lambda \subseteq \tilde \fg_\pa$ for $\a\in R$ and $\lambda \in \Lambda$. So by
   (\ref{gg}){,}
\begin{equation}
  \begin{array}{ll}
    \tilde\fg = \bigoplus_{\lambda \in \Lambda} \tilde\fg^\lambda & = \bigoplus_{\lambda \in \Lambda}
    \bigoplus_{\pi(\alpha) \in \pi(R)} (\fg^{\bar \lambda}_{\pi(\alpha)} \otimes \CA^\lambda) \\
& = \bigoplus_{\pi(\alpha) \in \pi(R)} \bigoplus_{\lambda \in \Lambda} (\fg^{\bar \lambda}_{\pi(\alpha)} \otimes \CA^\lambda) \\
& \subseteq \bigoplus_{\pi(\alpha) \in \pi(R)} \tilde\fg_{\pi(\alpha)} \subseteq \tilde\fg. \\
  \end{array}
\end{equation}
   Thus we have
\begin{equation}\label{gtilderootdec}
  \tilde\fg = \bigoplus_{\pi(\alpha) \in \pi(R)} \tilde\fg_{\pi(\alpha)}
\end{equation}
   with
\begin{equation}\label{tildefgpa}
    {\tilde\fg}_{\pi(\alpha)} = \bigoplus_{\lambda \in \Lambda} ({\fg}^{\bar{\lambda}}_{\pi(\alpha)}\otimes\CA^\lam).
\end{equation}
    Therefore we have the following lemma.
\begin{lem} \label{QpiRgraded}
  $\tilde{\fg}$ admits a compatible $(\langle\pi(R)\rangle, \Lambda)$-grading
   \[\tilde{\fg} = \bigoplus_{\lambda \in \Lambda, \gamma \in\langle\pi(R)\rangle} \fg^{\bar \lambda}_\gamma \otimes \CA^\lambda\]
  where  for any ${\lambda \in \Lambda}$, $\tilde\fg_\gamma^\lambda =\{0\}$ if $\gamma \notin \pi(R)$.
\end{lem}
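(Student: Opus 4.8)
The plan is to exhibit the two asserted gradings on $\tilde\fg$ and then verify that together they satisfy the definition of a compatible $(G,Q)$-grading recalled in Section~\ref{pre}. The $\Lambda$-grading $\tilde\fg=\bigoplus_{\lambda\in\Lambda}\tilde\fg^\lambda$ with $\tilde\fg^\lambda=\fg^{\bar\lambda}\otimes\CA^\lambda$ is already built into Definition~\ref{loopalgebra}; it is a Lie algebra $\Lambda$-grading because $[x\otimes a,y\otimes b]=[x,y]\otimes ab$ together with $\CA^\mu\CA^\nu\subseteq\CA^{\mu+\nu}$, $[\fg^{\bar\mu},\fg^{\bar\nu}]\subseteq\fg^{\overline{\mu+\nu}}$ and $\overline{\mu+\nu}=\bar\mu+\bar\nu$ (recall $\bar\lambda=\rho(\lambda)$ and $\rho$ is a homomorphism). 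For the $\langle\pi(R)\rangle$-grading I would start from the root spaces $\tilde\fg_{\pi(\alpha)}$, $\alpha\in R$, introduced just before the statement of the lemma, and complete the family by declaring $\tilde\fg_\gamma:=\{0\}$ for every $\gamma\in\langle\pi(R)\rangle\setminus\pi(R)$; here $\langle\pi(R)\rangle$ is the well-defined abelian group generated by $\pi(R)$ inside $(T^0)^\star$.

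Next I would check that $\tilde\fg=\bigoplus_{\gamma\in\langle\pi(R)\rangle}\tilde\fg_\gamma$ is a $\langle\pi(R)\rangle$-grading. The direct-sum decomposition is precisely~(\ref{gtilderootdec}), with the vacuous summands added. For the bracket condition, a bracket of homogeneous elements has, by~(\ref{tildefgpa}), the form $[\fg^{\bar\lambda}_{\pi(\alpha)}\otimes\CA^\lambda,\fg^{\bar\mu}_{\pi(\beta)}\otimes\CA^\mu]=[\fg^{\bar\lambda}_{\pi(\alpha)},\fg^{\bar\mu}_{\pi(\beta)}]\otimes\CA^\lambda\CA^\mu$, and since the $\ad$-action of the abelian algebra $T^0$ gives $[\fg_{\pi(\alpha)},\fg_{\pi(\beta)}]\subseteq\fg_{\pi(\alpha)+\pi(\beta)}$, combined with the $\Z_m$-grading of $\fg$, this lands in $\fg^{\overline{\lambda+\mu}}_{\pi(\alpha)+\pi(\beta)}\otimes\CA^{\lambda+\mu}\subseteq\tilde\fg_{\pi(\alpha)+\pi(\beta)}$. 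If $\pi(\alpha)+\pi(\beta)\notin\pi(R)$, then $\fg^h_{\pi(\alpha)+\pi(\beta)}=\{0\}$ for all $h$ by Lemma~\ref{QpiR,Hgraded}, so the bracket is $0=\tilde\fg_{\pi(\alpha)+\pi(\beta)}$. In all cases $[\tilde\fg_\gamma,\tilde\fg_\delta]\subseteq\tilde\fg_{\gamma+\delta}$.

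Finally, for compatibility I would intersect~(\ref{tildefgpa}) with $\tilde\fg^\lambda=\fg^{\bar\lambda}\otimes\CA^\lambda$: because $\CA=\bigoplus_\mu\CA^\mu$ with $\CA^\lambda\neq\{0\}$ by~(\ref{newtempeq12}), and $\fg^{\bar\lambda}=\bigoplus_{\pi(\alpha)\in\pi(R)}\fg^{\bar\lambda}_{\pi(\alpha)}$ by~(\ref{final2}), one obtains $\tilde\fg^\lambda\cap\tilde\fg_{\pi(\alpha)}=\fg^{\bar\lambda}_{\pi(\alpha)}\otimes\CA^\lambda$ and hence $\tilde\fg^\lambda=\bigoplus_{\gamma\in\langle\pi(R)\rangle}(\tilde\fg^\lambda\cap\tilde\fg_\gamma)$, which is exactly~(\ref{gg}); this pins down the homogeneous pieces as $\tilde\fg^\lambda_\gamma=\fg^{\bar\lambda}_\gamma\otimes\CA^\lambda$. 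The asserted vanishing $\tilde\fg^\lambda_\gamma=\{0\}$ for $\gamma\notin\pi(R)$ is then inherited verbatim from the statement $\fg^h_\gamma=\{0\}$ in Lemma~\ref{QpiR,Hgraded}. I expect no real obstacle here: everything is a repackaging of~(\ref{gg}), (\ref{gtilderootdec}), (\ref{tildefgpa}) and Lemma~\ref{QpiR,Hgraded}, and the only point deserving a line of care is the elementary fact that tensoring the internal direct sum $\fg^{\bar\lambda}=\bigoplus_{\pi(\alpha)}\fg^{\bar\lambda}_{\pi(\alpha)}$ with the nonzero space $\CA^\lambda$ again produces an internal direct sum, so that the pieces $\tilde\fg^\lambda\cap\tilde\fg_{\pi(\alpha)}$ are genuinely independent and the two gradings are compatible rather than merely coexistent.
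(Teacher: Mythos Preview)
Your proposal is correct and follows essentially the same route as the paper: the lemma is stated there as an immediate consequence of the preceding display equations~(\ref{gg}), (\ref{gtilderootdec}) and (\ref{tildefgpa}) together with Lemma~\ref{QpiR,Hgraded}, and you simply spell out in more detail the verification of the grading and compatibility axioms that the paper leaves implicit.
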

   Consider the $\F$-vector space
\begin{equation}\label{newtempeq17}
\CV:=\F \otimes_\Z \Lambda.
\end{equation}
Since $\Lambda$ is torsion free, we may identify
   $\Lambda$ with the subgroup $1\otimes\Lambda$ of $\CV$. Now as $\Lambda$
   spans $\CV$, it contains a basis $\{\lambda_i\mid i\in I\}$ of $\CV$.
For any $i \in I$, set $d_i\in \CV^\star$ by
$d_i(\lambda_j):=\delta_{ij}$, $j\in I$, and let $\CV^\dag$ be the
restricted dual of $\CV$ with respect to the basis $\{\lam_i\mid
i\in I\}$, namely
\begin{equation}\label{newtempeq18}
\CV^\dag:=\hbox{span}_\F\left\{ d_i\mid i \in I\right\} \subseteq
\CV^\star.
\end{equation}
 Define
\begin{equation}\label{hatfg}
  \hat \fg:=\widehat{L_\rho(\fg,\CA)}:=\tilde \fg\oplus \CV \oplus \CV^\dag
  \andd \hat T:=\tilde T\oplus \CV\oplus \CV^\dag=(T^0\otimes 1)\oplus\CV\oplus\CV^\dag.
\end{equation}
If $\rho=0$, we denote  $\hat\fg$  by $\widehat{L(\fg,\CA)}$. We
make $\hat \fg$ into  a \la by letting the Lie bracket  be
\begin{equation}\label{bracketloop}\begin{array}{l}
  \;[d,x]=d(\lambda) x,  \quad d\in\CV^\dag,x\in \tilde \fg^\lambda,\lambda \in \Lambda,\\
 \;[\CV,\hat\fg]=\{0\},\\
\;[x,y]=[x,y]_{\tilde\fg}+\sum_{i\in I}([d_i,x],y)\lambda_i, \quad
x,y\in \tilde\fg, \end{array}\end{equation} where by
$[\cdot,\cdot]_{\tilde\fg}$ and $(\cdot,\cdot)$, we mean the Lie
bracket and the  bilinear form on ${\tilde\fg},$ respectively.
Note that {for each $x,y\in\tilde\fg$, $\sum_{i\in
I}([d_i,x],y)\lambda_i$ makes sense as $[d_i,x]=0$, for all but a
finite number of $i \in I$.} We next extend the form on
$\tilde\fg$
  to a bilinear form on $\hat \fg$ by
\begin{equation}\label{formloop}
\begin{array}{l}
(\CV,  \CV)=(\CV^\dag,\CV^\dag)=(\CV,\tilde\fg)=(\CV^\dag,\tilde \fg):=\{0\},\\
(v,d)=(d,v):=d(v),\quad  d\in \CV^\dag, v\in \CV.
\end{array}
\end{equation}
  The above form is clearly nondegenerate on $\hat\fg$. For any $\lambda\in \Lambda$,
  define $\delta_\lambda \in \left.\hat{T}\right.^\star$ by $$\delta_\lambda((T\otimes 1)\oplus\CV)=\{0\}, \delta_\lambda(d)=(\lambda, d),\;\;d\in \CV^\dag.$$
  {Then the assignment $\lambda \mapsto \delta_\lambda$ affords an embedding of $\Lam$ into $\left.\hat{T}\right.^\star$, by the nondegeneracy of
  $\form$.}
  So we may identify $\lambda$ with $\delta_\lambda$ and suppose that $\Lambda \subseteq \left.\hat{T}\right.^\star$.

For $\a\in R,$ one can extend  $\pa\in \pi(R)$ to $\left. \hat
T\right.^\star$
  by defining $\pa(\CV\oplus\CV^\dag):=\{0\}$.
  Now let $x\in \tilde \fg_\pa^\lambda,$ $\lam\in\Lam,\a\in R,$ and $(t\otimes1)+v+\bar v\in \hat T$, $t\in T^0,v\in \CV,\bar v\in\CV^\dag,$ then
\begin{eqnarray*}
  [(t\otimes 1) + v +\bar v,x]&=& [t \otimes 1,x]_{\tilde\fg} + \bar v(\lambda)x\\
                            &=& (\pa(t\otimes 1) + \bar v(\lambda))x\\
                            &=& (\pa+\lambda)((t\otimes 1)+ v +\bar v) x.
\end{eqnarray*}
This shows that
\begin{equation}\label{newtempeq13}
\hat\fg=\bigoplus_{\tilde\alpha\in \left.{\hat
T}\right.^\star}{\hat\fg}_{\tilde\alpha},
\end{equation}
where
$${\hat\fg}_{\tilde\alpha}:=\{x\in\hat\fg\mid [\hat t,x]=\tilde\alpha(\hat t)x\hbox{ for all }\hat t\in \hat T\}.
$$
That is, $(\hat\fg,\hat T)$ is a toral pair. Moreover, if $\hat R$
is the root system of $(\hat\fg, \hat T)$, then
\begin{equation}\label{newtemeq8}
\hat R\subseteq \pi(R)\oplus\Lambda,
\end{equation}
{and for $\a\in R$ and $\lambda\in\Lam$,
\begin{equation}\label{hatfgpa+lambda}
\hat\fg_{\pa+\lambda}=\left\{\begin{array}{ll}
\fg_\pa^{\bar\lambda}\otimes\CA^\lam&\mbox{if }\pa+\lambda\not=0,\\
(\fg^0_{\pi(0)} \otimes \CA^0 )\oplus \CV \oplus \CV^\dag&\hbox{if
}\pa+\lambda=0.
\end{array}\right.
\end{equation}
} Next for $\lam\in\Lam$,  we put
\begin{equation}\label{newtempeq15}
 R_{\bar\lambda}:=\{ \alpha\in R\mid \fg_\pa^{\bar\lambda}\neq \{0\}\},
 \end{equation}
  then it follows from Lemma \ref{QpiR,Hgraded}, (\ref{newtempeq13}),
  (\ref{hatfgpa+lambda}), (\ref{gtilderootdec}), (\ref{tildefgpa}) and (\ref{newtempeq12}) that
\begin{equation}\label{newtempeq11}
R=\bigcup_{\lam\in\Lam}R_{\bar\lam}\andd\hat R=
\bigcup_{\lambda\in\Lambda} (\pi(R_{\bar\lambda}) + \lambda).
\end{equation}

Now we can prove the main theorem of this section which is a
rather comprehensive extension of \cite[Theorem
3.63]{abpcovering1}.

\begin{thm}\label{Lrhofgisiara}
  {Let $(\fg, T)$ be a division IARA with corresponding root system $R$. Suppose $\sigma$
  is an automorphism of $\fg$ satisfying (A1)-(A4). {Assume further that either (A5) holds or $\fg_0$ is
  abelian.}
  Suppose $\Lambda$ is a torsion free abelian group and $\rho:\Lambda\lra \Z_m$ a group epimorphism.
  In addition, let $\CA$ be a unital commutative associative predivision $\Lambda$-graded algebra,
  with $\supp_\Lam(\CA)=\Lam$.
  Then $(\hat\fg=\widehat{L_\rho(\fg,\CA)},\hat{T})$ is an IARA with root system $\hat R=\cup_{\lam\in\Lam}(\pi(R_{\bar \lam})+\lam)$.
  Moreover, if $R$ is indecomposable then so is $\hat R$. Finally,
  if $T$ is a splitting Cartan subalgebra of $\fg$
  and $\CA^0=\F$, then $\hat T$ is also a splitting {Cartan subalgebra of }
  $\hat\fg$.}
\end{thm}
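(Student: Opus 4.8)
The plan is the following. By \eqref{newtempeq13} the pair $(\hat\fg,\hat T)$ is already a toral pair, and by \eqref{newtempeq11} its root system is $\hat R=\bigcup_{\lambda\in\Lambda}(\pi(R_{\bar\lambda})+\lambda)$; so what remains is to verify axioms (IA1)--(IA3) of Definition \ref{iara} for $(\hat\fg,\hat T)$, to quote \cite[Theorem 6.8]{nehersurvey} for the affine-reflection-system assertion, and to dispose of the two final clauses. For (IA1) I would first note that the form \eqref{formloop} is symmetric and, as recorded in the construction, nondegenerate on $\hat\fg$; its nondegeneracy on $\hat T=(T^0\otimes1)\oplus\CV\oplus\CV^\dag$ is immediate, since it is nondegenerate on $T^0\otimes1$ (the form on $T^0$ is nondegenerate by Proposition \ref{toralpair} and $\epsilon(1,1)\neq0$ by \eqref{newtempeq6}) and pairs $\CV$ perfectly with $\CV^\dag$ while the remaining blocks vanish. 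Invariance $([a,b],c)=(a,[b,c])$ is then a routine case analysis over the three summands, using invariance of the form on $\tilde\fg$ (Lemma \ref{tildefgform}), the relations $[d,x]=d(\lambda)x$ for $d\in\CV^\dag$, $x\in\tilde\fg^\lambda$ and $[\CV,\hat\fg]=0$; the one identity needing a word is $([d_k,a],b)=-d_k(\mu)(a,b)$ for homogeneous $a,b\in\tilde\fg$, $b\in\tilde\fg^\mu$, which holds because $(a,b)\neq0$ forces $a$ and $b$ into opposite $\Lambda$-degrees.

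The core of the argument is (IA2). Given $\tilde\alpha=\pi(\alpha)+\lambda\in\hat R\setminus\{0\}$ with $\alpha\in R_{\bar\lambda}$, I would fix an invertible $a\in\CA^\lambda$ (this exists since $\CA$ is predivision $\Lambda$-graded; as $\CA$ is commutative and $\Lambda$-graded one has $a^{-1}\in\CA^{-\lambda}$, and $\epsilon(a,a^{-1})\neq0$ by invariance of $\epsilon$). If $\pi(\alpha)\neq0$, pick $0\neq e_0\in\fg^{\bar\lambda}_{\pi(\alpha)}$; Lemma \ref{star} supplies $f_0\in\fg^{-\bar\lambda}_{-\pi(\alpha)}$ with $0\neq[e_0,f_0]\in T^0$, and Lemma \ref{tpialpha} applied to $(\fg,T^0)$ (legitimate by Proposition \ref{toralpair}) gives $[e_0,f_0]=(e_0,f_0)t_{\pi(\alpha)}$ with $(e_0,f_0)\neq0$ and $t_{\pi(\alpha)}\neq0$. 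If $\pi(\alpha)=0$, then $\lambda\neq0$ and $\fg^{\bar\lambda}_{\pi(0)}\neq0$, and here I invoke Lemma \ref{afterstar}---this is exactly where the hypothesis ``(A5) holds or $\fg_0$ is abelian'' is used---to obtain $e_0\in\fg^{\bar\lambda}_{\pi(0)}$ and $f_0\in\fg^{-\bar\lambda}_{\pi(0)}$ with $[e_0,f_0]=0$ but $(e_0,f_0)\neq0$. In either case I set $\tilde e:=e_0\otimes a\in\hat\fg_{\tilde\alpha}$, $\tilde f:=f_0\otimes a^{-1}\in\hat\fg_{-\tilde\alpha}$, and compute from \eqref{bracketloop}
\[
[\tilde e,\tilde f]=[e_0,f_0]\otimes1+(e_0,f_0)\,\epsilon(a,a^{-1})\,\lambda\ \in\ (T^0\otimes1)\oplus\CV\ \subseteq\ \hat T,
\]
which is nonzero either because its $T^0\otimes1$-component $(e_0,f_0)t_{\pi(\alpha)}\otimes1\neq0$ (first case) or because its $\CV$-component $(e_0,f_0)\epsilon(a,a^{-1})\lambda\neq0$ (second case, since $\lambda\neq0$). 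That is (IA2).

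For (IA3), a direct computation with Lemma \ref{pa} and \eqref{formloop} yields $(\tilde\alpha,\tilde\alpha)=\epsilon(1,1)^{-1}(\pi(\alpha),\pi(\alpha))$ (the $\CV$-part of $t_{\pi(\alpha)+\lambda}$ is isotropic and orthogonal to $T^0\otimes1$), so a nonisotropic $\tilde\alpha=\pi(\alpha)+\lambda\in\hat R$ has $\pi(\alpha)\in\pi(R)^\times$. Now for $x\in\hat\fg_{\tilde\alpha}$ and $y\in\hat\fg_{\tilde\beta}$ with $\tilde\beta\in\hat R$, we have $(\ad x)^ny\in\hat\fg_{n\tilde\alpha+\tilde\beta}$, and since $\hat R\subseteq\pi(R)\oplus\Lambda$ with direct sum by \eqref{newtemeq8}, the membership $n\tilde\alpha+\tilde\beta\in\hat R$ forces $n\pi(\alpha)+\pi(\beta)\in\pi(R)$; because $\pi(R)$ is an affine reflection system (Theorem \ref{gt0isiara}) and $\pi(\alpha)$ is nonisotropic, axiom (R3) bounds the set of such $n$. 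Hence $(\ad x)^ny=0$ for all large $n$, and decomposing an arbitrary element of $\hat\fg$ into finitely many root vectors gives local nilpotence of $\ad x$. This proves (IA3), so $(\hat\fg,\hat T)$ is an IARA, and $\hat R$ is an affine reflection system by \cite[Theorem 6.8]{nehersurvey}.

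For the last two clauses: if $R$ is indecomposable then $\pi(R)$ is indecomposable by Lemma \ref{newtemplem6} (cf. Theorem \ref{gt0isiara}); as $(\tilde\alpha,\tilde\beta)$ is a fixed nonzero multiple of $(\pi(\alpha),\pi(\beta))$, the projection $\pi(\alpha)+\lambda\mapsto\pi(\alpha)$ maps $\hat R^\times$ onto $\pi(R)^\times$, preserves orthogonality, and never separates two roots over a common element of $\pi(R)^\times$ (such roots are nonisotropic, hence non-orthogonal), so connectedness of $\pi(R)^\times$ transfers to $\hat R^\times$, i.e. $\hat R$ is indecomposable. Finally, assume $T=\fg_0$ is a splitting Cartan subalgebra and $\CA^0=\F$. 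By \eqref{hatfgpa+lambda}, $\hat\fg_0=(\fg^0_{\pi(0)}\otimes\CA^0)\oplus\CV\oplus\CV^\dag$ and $\fg^0_{\pi(0)}=C_{\fg^0}(T^0)$; axiom (A4) gives $C_{\fg^0}(T^0)\subseteq\fg_0=T$, hence $\subseteq T\cap\fg^0=T^0$, while $T^0\subseteq\fg^0$ together with $T^0\subseteq T=\fg_0\subseteq C_\fg(T^0)$ gives the reverse inclusion, so $\fg^0_{\pi(0)}=T^0$; as $\CA^0=\F\cdot1$, this yields $\hat\fg_0=(T^0\otimes1)\oplus\CV\oplus\CV^\dag=\hat T$. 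I expect the main obstacle to be (IA2), and inside it the degenerate range $\pi(\alpha)=0$: there the $\tilde\fg$-part of $[\tilde e,\tilde f]$ can only be forced to vanish rather than to land in $T^0$, so the nonzero output in $\hat T$ has to come from the $\CV$-term, which is available precisely because $\CA$ is predivision and Lemma \ref{afterstar} applies---and that lemma is exactly why (A5) (or abelianness of $\fg_0$) is assumed.
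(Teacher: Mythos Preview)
Your proposal is correct and follows essentially the same route as the paper: the toral-pair structure and root system are already in place, (IA1) comes from nondegeneracy of the extended form, (IA2) splits into the cases $\pi(\alpha)\neq0$ (Lemma~\ref{star}) and $\pi(\alpha)=0$ (Lemma~\ref{afterstar}, which is where (A5) or abelianness of $\fg_0$ enters), (IA3) uses that $\pi(R)$ is an affine reflection system via Theorem~\ref{gt0isiara}, indecomposability reduces to that of $\pi(R)$ since $\Lambda$ sits in the radical, and the splitting-Cartan clause follows from (A4) exactly as you wrote. Your treatment is in fact a bit more explicit than the paper's in two places---you spell out the invariance of the form on $\hat\fg$ (which the paper takes for granted) and you argue (IA3) directly via the root-string axiom (R3) rather than deferring to ``a similar technique as in the proof of Theorem~\ref{gt0isiara}''---but these are elaborations of the same argument, not a different approach.
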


\begin{proof}
  We have already seen that $(\hat\fg,\hat T)$ is a toral pair, so it remains to verify conditions
  (IA1)-(IA3) of Section \ref{pre}.
  We know that the form introduced by (\ref{formloop}) on $\hat \fg$
  is nondegenerate  on both $\hat\fg$ and $\hat T$
  and so (IA1) holds for $\hat\fg$.

 {We next show that (IA2) holds. Assume that $\a\in R$, $\lambda\in\Lam$,
 $\pa+ \lambda \neq 0$ and $\hat\fg_{\pa+\lambda} \neq 0$. By (\ref{hatfgpa+lambda}),
  $\hat\fg_{\pa+\lambda} = \fg_\pa^{\bar\lambda}\otimes\CA^\lambda$,
  so $\fg_\pa^{\bar\lambda} \neq 0$ and $\CA^\lambda \neq 0$. As $\CA$ is predivision $\Lam$-graded,
  there exists $a\in \CA^\lambda$ and $b\in \CA^{-\lambda}$ such that $ab=1$.
To proceed with the proof, we  divide the argument into two cases
$\pi(\a)\not=0$ and $\pi(\a)=0$.
  Assume first that $\pa\neq 0$, then by Lemma~\ref{star}, there exist $0 \neq x \in \fg_\pa^{\bar \lambda}$ and
  $ 0\neq y \in \fg_{-\pa}^{-\bar\lambda}$ such that $0\neq [x,y]\in T^0$, and  thus
  \[ [x\otimes a,y\otimes b]=([x,y]\otimes 1)+ \sum_{i\in I}([d_i,x\otimes a],y\otimes b)\lambda_i \in \hat
  T\setminus\{0\},\]} as required.

{Next, assume $\pa=0,$ then by Lemma~\ref{afterstar}, there exist
$x\in \fg_{\pi(0)}^{\bar\lam}$ and $y \in
\fg_{\pi(0)}^{-\bar\lam}$ such that $[x,y]=0$ but $(x,y)\neq 0$.}
{So we have
\begin{align*}
[x\otimes a,y \otimes b]&= ([x,y] \otimes 1) +\sum_{i\in I} ([d_i,x\otimes a],y\otimes b)\lam_i\\
                          &= 0 +\sum_{i\in I}\epsilon(a,b)d_i(\lambda)(x,y)\lambda_i.
\end{align*}
This is a nonzero element of $\hat T$ since $(x,y)\neq 0$,
$\epsilon(a,b)= {\epsilon(1,1)} \not=0$ and as
$\lam=0+\lam=\pi(\a)+\lam\neq 0,$ $d_i(\lambda)\not=0$ for some
$i\in I$. This means that  (IA2) holds for $\hat \fg$.}

  {Finally, we consider (IA3). Let $\a\in R$, $\lambda\in\Lambda$ and $(\pi(\a)+\lambda,\pi(\a)+\lambda)\not=0$.
  {A}s $(\lam,\lam)=(\lam,\pi(\a))=0$, we have $(\pi(\a),\pi(\a))\not=0$.
  Since by Theorem~\ref{gt0isiara}, $\pi(R)$ is an affine reflection system, one
  can use a similar technique as in the proof of Theorem \ref{gt0isiara} to show that $\ad(x)$
  is locally nilpotent for any $x \in \hat\fg_{\pa+\lambda}$. So $\hat\fg$ satisfies (IA3) and $\hat\fg$ is an IARA. Moreover,
  the root system  $\hat R$ of $(\hat\fg,\hat T)$ satisfies $\hat R={\bigcup}_{\lam\in\Lam}(\pi(R_{\bar\lam})+\lam)$, by
  (\ref{newtempeq11}).}

 Next, suppose $R$ is indecomposable. Since $\Lam$ is contained in the radical of the form, $\hat R$
is indecomposable if and only if
$\cup_{\lam\in\Lam}\pi(R_{\bar\lam})$ is indecomposable.
  But by (\ref{newtempeq11}) this union is $\pi(R)$ which is indecomposable by Lemma \ref{newtemplem6}.

To see the final assertion {of the theorem}, we note that if
$\fg_0=T,$ then   by (A4), $C_{\fg^0}(T^0)=T^0.$ Therefore as
$\CA^0=\mathbb{F},$ using  (\ref{hatfgpa+lambda}), we have
\begin{eqnarray*}
\hat\fg_0&=&\big((\fg_{\pi(0)}\cap\fg^{0})\otimes\CA^0\big)\oplus\CV\oplus\CV^\dag\\
&=&
\big(C_{\fg^0}(T^0)\otimes 1\big) \oplus\CV\oplus\CV^\dag\\
&=&(T^0\otimes 1)\oplus\CV\oplus\CV^\dag=\hat T.
\end{eqnarray*}
Thus $\hat T$ is a splitting Cartan subalgebra of $\hat\fg$ as
required.
\end{proof}

\begin{cor}\label{newtempcor1}
  Let $(\fg, T)$ be a{n} IARA with corresponding root system $R$ and bilinear from $\form$.
  Let $\Lambda$ be a torsion free abelian group and $\CA$ be a unital commutative associative predivision $\Lambda$-graded algebra,
  with $\supp_\Lam(\CA)=\Lam$. Define $\hat\fg:=(\fg\otimes\CA)\oplus\CV\oplus\CV^\dag$ and $\hat T:=(T\otimes 1)\oplus\CV\oplus\CV^\dag$,
  where $\CV$ and $\CV^\dag$ are defined as
in (\ref{newtempeq17}) and (\ref{newtempeq18}), respectively. Then
$(\hat\fg,\hat{T})$ is a{n} IARA with root system $\hat
R=R\oplus\Lam$. Moreover, if $R$ is indecomposable then so is
$\hat R$.
\end{cor}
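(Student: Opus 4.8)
The plan is to specialize the construction of Section~\ref{extended affinization} to $\sg=\id_\fg$ (so $m=1$) and $\rho=0\colon\Lam\to\Z_1$, and to check axioms (IA1)--(IA3) for the resulting toral pair $(\hat\fg,\hat T)$ by reusing the arguments in the proof of Theorem~\ref{Lrhofgisiara} at every place where they do not invoke axiom (IA2)$'$, supplying a direct argument where they do. The only conceptual wrinkle is that here $(\fg,T)$ is assumed to be an IARA but not a \emph{division} IARA, so (IA2)$'$ is unavailable; as usual we regard $\CA$ as equipped with the $\Lam$-graded nondegenerate invariant symmetric form $\ep$, $\ep(1,1)\ne0$, that is implicit in the definition of $\hat\fg$.

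For $\sg=\id_\fg$ the $\Z_m$-grading of $\fg$ is trivial, $T^0=T$, the projection $\pi$ is the identity on $T^\star$ (so $\pi(R)=R$ and $\fg_{\pi(\a)}=\fg_\a$), and, since $\rho=0$, $\tilde\fg=L_0(\fg,\CA)=\fg\otimes\CA$; thus $\hat\fg$ and $\hat T$ of (\ref{hatfg}) are exactly the spaces in the statement. That $(\hat\fg,\hat T)$ is a toral pair with root system $\hat R=\bigcup_{\lam\in\Lam}(\pi(R_{\bar\lam})+\lam)=R\oplus\Lam$ is the computation preceding (\ref{newtempeq13}) together with (\ref{hatfgpa+lambda})--(\ref{newtempeq11}) and the fact that $R_{\bar 0}=R$ here; none of this uses division. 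Axiom (IA1) holds since the form (\ref{formloop}) is nondegenerate on $\hat\fg$ and on $\hat T$. For axiom (IA3), let $\a\in R$ and $\lam\in\Lam$ with $(\a+\lam,\a+\lam)=(\a,\a)\ne0$ (the first equality holding since $\Lam$ lies in the radical of the form); $R$ is an affine reflection system by Remark~\ref{rem}, so $\{(\gamma,\b)\mid\gamma,\b\in R\}$ is finite, and the eigenvalue argument from the proof of Theorem~\ref{gt0isiara} then gives that $\ad x$ is locally nilpotent on $\hat\fg$ for every $x\in\hat\fg_{\a+\lam}$; this argument does not use division either. Finally, since $\Lam$ sits in the radical of the form on $\hat\fg$, $\hat R$ is indecomposable if and only if $\bigcup_\lam\pi(R_{\bar\lam})=R$ is, which holds by hypothesis.

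It remains to verify axiom (IA2) for $(\hat\fg,\hat T)$, and this is the step where the proof of Theorem~\ref{Lrhofgisiara} uses Lemmas~\ref{star} and~\ref{afterstar} and hence division; I would argue directly. Let $\a\in R$, $\lam\in\Lam$ with $\a+\lam\ne 0$ and $\hat\fg_{\a+\lam}\ne\{0\}$; by (\ref{hatfgpa+lambda}) this forces $\fg_\a\ne\{0\}$ and $\CA^\lam\ne\{0\}$, and since $\supp_\Lam(\CA)=\Lam$ and $\CA$ is predivision $\Lam$-graded we may choose $a\in\CA^\lam$, $b\in\CA^{-\lam}$ with $ab=1$, so $\ep(a,b)=\ep(1,1)\ne0$. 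If $\a\ne0$, then axiom (IA2) for $(\fg,T)$ and Lemma~\ref{tpialpha} yield $e_\a\in\fg_\a$, $f_\a\in\fg_{-\a}$ with $[e_\a,f_\a]=(e_\a,f_\a)t_\a\ne0$ and $(e_\a,f_\a)\ne0$, and then $[e_\a\otimes a,f_\a\otimes b]=(e_\a,f_\a)(t_\a\otimes1)+\ep(1,1)(e_\a,f_\a)\sum_{i\in I}d_i(\lam)\lam_i$, whose $T\otimes1$-component is nonzero, so it lies in $\hat T\setminus\{0\}$. If $\a=0$, then $\lam\ne0$; since $R\ne\{0\}$ we have $T\ne\{0\}$, and as $\form$ is nondegenerate on the abelian algebra $T$ we may choose $e,f\in T$ with $(e,f)\ne0$, whence $[e\otimes a,f\otimes b]=\ep(1,1)(e,f)\sum_{i\in I}d_i(\lam)\lam_i$ is a nonzero element of $\hat T$ because $\lam\ne0$ forces $d_i(\lam)\ne0$ for some $i\in I$. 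This establishes (IA2), and hence that $(\hat\fg,\hat T)$ is an IARA with root system $R\oplus\Lam$. I do not expect a genuine obstacle: apart from bypassing division in the (IA2) step, the corollary is bookkeeping on top of Theorem~\ref{Lrhofgisiara}, and the bypass is harmless precisely because for $\sg=\id_\fg$ there is no nonzero root $\a$ with $\pi(\a)=0$, so the delicate behaviour of $\fg_{\pi(0)}$ that necessitated axioms (IA2)$'$ and (A5) in the general theorem does not occur, and the case $\a=0$ of (IA2) is settled entirely inside $T$.
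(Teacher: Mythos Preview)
Your proposal is correct and follows essentially the same approach as the paper: specialize to $\sg=\id_\fg$, observe that (A1)--(A5) hold trivially, and then note that the only place the proof of Theorem~\ref{Lrhofgisiara} uses division (IA2)$'$ is in establishing (IA2) for $\hat\fg$, which in the identity case can be handled directly from (IA2) for $\fg$ (and from $T\ne\{0\}$ when $\a=0$). The paper's proof simply asserts that ``(\ref{final}) clearly holds with the weaker axiom (IA2)'' where you have written out the two cases $\a\ne0$ and $\a=0$ explicitly; your unpacking is exactly what the paper leaves to the reader.
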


\begin{proof}
Taking $\sg$ to be the identity automorphism and recalling from
Remark \ref{temprem1} that $T\neq\{0\}$, it is apparent that $\sg$
satisfies conditions (A1)-(A5). Therefore, if $(\fg,T)$ is
division, we are done by Theorem \ref{Lrhofgisiara}. Now  a close
look at the proof of Theorem \ref{Lrhofgisiara}, shows that the
division property, that is (IA2)$'$, guarantees the existence of
nonzero elements $x\in \fg^{\bar \lam}_{\pi(\a)}$ and
$y\in\fg^{-\bar \lam}_{-\pi(\a)}$ ($\a\in R$ and
$\lambda\in\Lambda$ with $\pi(\a)+\lam\neq0$) such that
\begin{equation}\left\{\begin{array}{ll}[x,y]\in
T^0\setminus\{0\}&
\hbox{if $\pi(\a)\neq0$}\\
{[x,y]=0\andd (x,y)\neq0}& \hbox{if
$\pi(\a)=0.$}\end{array}\right.\label{final}\end{equation}
 However
when $\sg$ is the identity automorphism, (\ref{final}) clearly
holds with the weaker axiom (IA2). Finally, since $\sg$ is the
identity automorphism, it follows immediately from Theorem
\ref{Lrhofgisiara} that $\hat R=R\oplus\Lam$
\end{proof}

\begin{cor}\label{division case}
  {Let $(\fg, T)$ be a division IARA with corresponding root system $R$. Suppose $\sigma$
  is an automorphism of $\fg$ satisfying (A1)-(A4). Assume
  further that 
  {$\fg_0$ is abelian.}
  Suppose $\Lambda$ is a torsion free abelian group and $\rho:\Lambda\lra \Z_m$ a group epimorphism.
  In addition, let $\CA$ be a {commutative} associative $\Lambda$-torus,
  with $\supp_\Lam(\CA)=\Lam$.
  Then $(\hat\fg=\widehat{L_\rho(\fg,\CA)},\hat{T})$ is a division
  IARA, with root system $\hat{R}$.}
\end{cor}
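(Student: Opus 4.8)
The plan is to deduce the statement from Theorem~\ref{Lrhofgisiara} and then strengthen axiom (IA2) to the division axiom (IA2)$'$. First I would recall that a commutative associative $\Lam$-torus is, by definition, predivision $\Lam$-graded, and that it carries a $\Lam$-graded invariant nondegenerate symmetric bilinear form $\epsilon$ with $\epsilon(1,1)\neq 0$ --- e.g.\ the form with $\epsilon(u_\lam,u_{-\lam})$ equal to the coefficient of $1$ in $u_\lam u_{-\lam}$ and $\epsilon(u_\lam,u_\mu)=0$ for $\lam+\mu\neq0$, where $\{u_\lam\mid\lam\in\Lam\}$ is a basis of homogeneous invertible elements --- so that $\hat\fg=\widehat{L_\rho(\fg,\CA)}$ is defined. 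Since $\fg_0$ is abelian, Theorem~\ref{Lrhofgisiara} then applies and gives that $(\hat\fg,\hat T)$ is an IARA with root system $\hat R=\bigcup_{\lam\in\Lam}(\pi(R_{\bar\lam})+\lam)$; so it only remains to verify (IA2)$'$.

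For (IA2)$'$, fix $\tilde\a\in\hat R\setminus\{0\}$ and $0\neq e\in\hat\fg_{\tilde\a}$. By~(\ref{newtemeq8}), $\tilde\a=\pa+\lam$ for some $\a\in R$, $\lam\in\Lam$, and $\pa+\lam\neq 0$, so $\hat\fg_{\tilde\a}=\fg_\pa^{\bar\lam}\otimes\CA^\lam$ by~(\ref{hatfgpa+lambda}). Since $\CA$ is a torus of full support, $\CA^\lam=\F a$, $\CA^{-\lam}=\F a^{-1}$ for an invertible homogeneous $a$, so $e=x\otimes a$ with $0\neq x\in\fg_\pa^{\bar\lam}$; for $y\in\fg_{-\pa}^{-\bar\lam}$, the element $f:=y\otimes a^{-1}\in\hat\fg_{-\tilde\a}$ satisfies, by~(\ref{bracketloop}) and~(\ref{newtempeq8}) (using $[d_i,x\otimes a]=d_i(\lam)(x\otimes a)$ and $\lam=\sum_i d_i(\lam)\lam_i$ in $\CV$),
\[
[e,f]=\bigl([x,y]\otimes 1\bigr)+(x,y)\,\epsilon(a,a^{-1})\,\lam .
\]
As $\epsilon(a,a^{-1})\neq 0$, it thus suffices to choose $y$ so that $[x,y]\in T^0$ with $[x,y]$ or $(x,y)$ nonzero; and by Proposition~\ref{toralpair}, $[x,y]\in T^0$ already forces $[x,y]=(x,y)t_\pa$. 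When $\pa\neq 0$ this is immediate from Lemma~\ref{star}, which yields $y\in\fg_{-\pa}^{-\bar\lam}$ with $0\neq[x,y]\in T^0$; then $t_\pa\neq0$ forces $(x,y)\neq0$ and $[e,f]=(x,y)\bigl((t_\pa\otimes 1)+\epsilon(a,a^{-1})\lam\bigr)\in\hat T\setminus\{0\}$.

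The substantive case is $\pa=0$: then $\lam\neq0$, $t_\pa=t_0=0$, so $[x,y]\in T^0$ forces $[x,y]=0$, and one must produce $y\in\fg_{\pi(0)}^{-\bar\lam}$ with $[x,y]=0$ but $(x,y)\neq0$ --- precisely an element-by-element version of Lemma~\ref{afterstar}, and the step where ``$\fg_0$ abelian'' is genuinely needed. I would use an orbit decomposition: by~(\ref{sumgalpha}) and Lemma~\ref{templem2}, $\fg_{\pi(0)}^{\bar\lam}$ is the (direct) sum of $\fg_0^{\bar\lam}$ and the subspaces $\pi_j(\fg_{\b_O})$ over the $\sigma$-orbits $O$ of nonzero roots with $\pi(\b_O)=0$ (here $\bar j=\bar\lam$ and $\b_O\in O$), and similarly for $\fg_{\pi(0)}^{-\bar\lam}$; write $x=x_0+\sum_O x_O$ accordingly. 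Repeated use of~(\ref{new-1}) for the form and of Lemma~\ref{[pi_j,pi_-j]}(iii) for the bracket (taking one of its two orbits to be $\{0\}$ where needed) shows these summands interact only block-diagonally, the $\fg_0$-summand being orthogonal to and commuting with every orbit-summand. Hence, if $x_0\neq0$, nondegeneracy and $\Z_m$-gradedness of $\form$ on $\fg_0$ [(\ref{newtempeq5}), (\ref{newform})] give $y\in\fg_0^{-\bar\lam}$ with $(x_0,y)\neq0$, whence $(x,y)=(x_0,y)\neq0$ and $[x,y]=0$ (using that $\fg_0$ is abelian and that $[x_O,y]=0$ for all $O$); and if $x_0=0$, choosing $O$ with $x_O=\pi_j(x')\neq0$ ($x'\in\fg_{\b_O}$) and applying Lemma~\ref{(pij,pi-j)neq0}(i) produces $y'\in\fg_{-\b_O}$ with $[\pi_j(x'),\pi_{-j}(y')]=0$ and $(\pi_j(x'),\pi_{-j}(y'))\neq0$, so $y:=\pi_{-j}(y')\in\fg_{\pi(0)}^{-\bar\lam}$ satisfies $[x,y]=0$ and $(x,y)\neq0$ by the block structure. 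In all cases $[e,f]=(x,y)\,\epsilon(a,a^{-1})\,\lam$ is a nonzero element of $\CV\subseteq\hat T$, which establishes (IA2)$'$.

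This completes the verification, so $(\hat\fg,\hat T)$ is a division IARA. The only genuine obstacle is the case $\pa=0$ --- upgrading the mere existence statement of Lemma~\ref{afterstar} to one valid for \emph{every} nonzero $x\in\fg_{\pi(0)}^{\bar\lam}$, which is exactly what the orbit/block decomposition above handles; everything else is routine bookkeeping around the bracket formula~(\ref{bracketloop}) plus direct appeals to Theorem~\ref{Lrhofgisiara}, Lemma~\ref{star}, and Lemma~\ref{(pij,pi-j)neq0}.
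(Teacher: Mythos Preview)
Your proof is correct and follows essentially the same route as the paper's: reduce to (IA2)$'$ via Theorem~\ref{Lrhofgisiara}, exploit $\dim\CA^\lam=1$ to write every root vector as $x\otimes a$, handle $\pa\neq0$ via Lemma~\ref{star}, and in the case $\pa=0$ use the orbit decomposition of $\fg_{\pi(0)}^{\bar\lam}$ from Lemma~\ref{templem2}(ii) together with Lemma~\ref{[pi_j,pi_-j]}(iii) and Lemma~\ref{(pij,pi-j)neq0}(i) to produce $y$ with $[x,y]=0$ and $(x,y)\neq0$. Your case split (on whether the $\fg_0$-component $x_0$ vanishes) is a cosmetic reorganization of the paper's split (on whether a chosen nonvanishing term $\pi_j(x_k)$ has $\a_k=0$), and your ``block-diagonal'' bookkeeping is exactly what the paper invokes when it says ``repeating the same argument as in the case $\a_k\neq0$''.
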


\begin{proof}
By Theorem~\ref{Lrhofgisiara}, $(\hat\fg,\hat T)$ is an IARA. So
the only condition which we should verify is (IA2)$'$.
  Suppose $\a\in R$, $\lam\in\Lam$, $\pi(\a)+\lam\not=0$ and $\hat\fg_{\pa+\lambda}=\fg_\pa^{\bar\lambda} \otimes
  \CA^\lambda \neq \{0\}$. Since $\CA$ is a $\Lam$-torus, $\CA^\lambda$ is one dimensional, say $\CA^\lambda = \hbox{span}_\F\{a\}$,
  where $a$ is invertible with inverse $b$.
  Then any element of $\hat\fg_{\pa+\lambda}$ is of the form $x \otimes a$ for some $0\not=x \in \fg_\pa^{\bar\lambda}$.
 Now fix a nonzero element $x\otimes a\in\hat\fg_{\pa+\lambda}.$ If $\pa \neq 0$, then by Lemma~\ref{star},
 there exists $y \in \fg_{-\pa}^{-\bar\lambda}$ such
  that $0 \neq [x,y]\in T^0.$ So as $[x,y]\not=0$, we have
 \[ [x\otimes a,y\otimes b]=([x,y]\otimes 1)+ \sum_{i\in I}([d_i,x],y)\lambda_i \in \hat T\setminus\{0\}.\]

Now suppose  $\pa =0.$   We claim that there exists  $y \in
\fg_{\pi(0)}^{-\bar\lam}$ such that $[x,y]=0$  and $(x,y) \neq 0.$
For this, take $j\in\Z$ such that $\bar j=\bar\lam$. By Lemma
\ref{templem2}(ii), we have
$$\fg^{\bar\lam}_{\pi(0)}=\fg^{\bar j}_{\pi(0)}=\sum_{\{\b\in\hbox{orb}(R)\mid\pi(\b)=0\}}\pi_j(\fg_\b).$$
Therefore, $x = \pi_j(x_1) + \cdots + \pi_j(x_n)$ where $x_i \in
\fg_{\a_i}$ for some $\a_1, \ldots, \a_n$ belong to distinct
$\sg$-orbits of $R$ and $\pi(\a_i)=0, 1\leq i \leq n$. As $x \neq
0$, $\pi_j(x_k)\neq 0$ for some $\F$. Now if $\a_k\not=0$, then by
Lemma~\ref{(pij,pi-j)neq0}, there exists $y_k\in\fg_{-\a_k}$ such
that $(\pi_j(x_k),\pi_{-j}(y_k))\not=0$ and
$[\pi_j(x_k),\pi_{-j}(y_k)]=0$. Set $y:=\pi_{-j}(y_k),$ then by
\rred{Lemma~\ref{[pi_j,pi_-j]}~(iii)}, we have
$[x,y]=[\pi_j(x_k),\pi_{-j}(y_k)]=0$. Also by
Lemma~\ref{[],()}(ii), we have
\begin{align*}
(x,y)=\sum_{i=1}^n(\pi_j(x_i), \pi_{-j}(y_k)) &= \frac{1}{m}
\sum_{i=1}^n( \overline{(x_i)}_j, y_k)
=(\pi_j(x_k),\pi_{-j}(y_k))\not=0,
\end{align*}
where considering (\ref{xj}), we note that for $i\not=k$,
$(\overline{({x}_i)}_j,y_k)\in (\fg_{\a_i},\fg_{-\a_k})=\{0\}$, as
$\a_i-\a_k\not=0$. So we are done in the case $\a_k\not=0$.

Next, suppose that $\a_k=0$. Then
$0\not=\pi_j(x_k)\in\pi_j(\fg_0)\sub\fg^{\bar j}_{\pi(0)}$. Since
$\form$ is nondegenerate on $\fg_0$, there exists $ y \in \fg_0$
such that $(\pi_j(x_k),y)\neq 0$. But as $\form$ is $\Z_m$-graded,
we may assume that $y=\pi_{-j}(y)\in\fg_{0}^{-\bar
j}\sub\fg^{-\bar j}_{\pi(0)}$. Then $[\pi_j(x),y]=0$, as by
assumption $\fg_0$ is abelian. Now repeating the same argument as
in the case $\a_k\not=0$ (using
\rred{Lemmas~\ref{[pi_j,pi_-j]}~(iii)} and \ref{[],()}), we get
$(x,y)\not=0$ and $[x,y]=0$. This completes the proof of the
claim.  Now we note that $\lam\not=0$ as $\pi(\a)+\lam\not=0$. So
$d_j(\lam)\not=0$ for some $j\in I$. Therefore we have
\begin{align*}
 [x\otimes a,y \otimes b]&= 0 +\sum_{i\in I} ([d_i,x\otimes a],y\otimes b)\lam_i=\sum_{i\in I}d_i(\lam)\epsilon(a,b)(x,y)\lam_i \in \hat T,
\end{align*}
 which is nonzero, as $\epsilon(a,b)\not=0$, $(x,y)\not=0$ and $d_j(\lam)\not=0.$
\end{proof}

Suppose that $(\fg,T)$ is an IARA with root system $R$ and
$\sigma$ is an automorphism of $\fg$ satisfying (A1)-(A4) such
that the order of $\sg$ is prime. As we have already seen,  the
automorphism $\sigma$ induces a linear isomorphism
$\sg:T^\star\longrightarrow T^\star$ with $\sigma(R)=R.$ In fact
$\sg$ is an automorphism of $R$ in the sense of
\cite{nehersurvey}. The following lemma shows that
$\sigma(\delta)=\delta$ for each $\delta\in R^0.$ In particular,
one gets that  an automorphism of an IARA, satisfying the above
conditions, preserves each isotropic  root space. This is a
nontrivial fact that one should consider in constructing suitable
automorphisms of IARA's.

\begin{lem}\label{newlem-1}
  {Suppose  $(A, \form, R)$ is a tame affine reflection system. In addition, suppose that $A$ is 2-torsion free, and
$\sigma$ is an automorphism of $A$ with $\sigma(R)=R$ (a root
system automorphism) of period $m$ such that
$\pi(\delta):=(1/m)\sum_{i=1}^{m-1}\sg^i(\delta) \neq 0$ for any
nonzero $\delta \in R^0$.
  Then $\sigma(\delta)=\delta$ for each $\delta\in R^0$.}
\end{lem}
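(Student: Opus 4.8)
The plan is to attach to an arbitrary $\delta\in R^0$ the element $\eta:=\delta-\sigma(\delta)$, and to reduce the entire statement to the single assertion that $\eta$ is again an isotropic root.

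Two preliminary facts are immediate. First, since $\pi$ is the averaging operator $\tfrac1m\sum_{i}\sigma^i$, i.e.\ the projection of $A\otimes_{\Z}\F$ onto its $\sigma$-fixed subspace, one has $\sigma\circ\pi=\pi\circ\sigma=\pi$ (because $\sigma^m=\id$), and hence $\pi(\eta)=\pi(\delta)-\pi(\sigma(\delta))=\pi(\delta)-\pi(\delta)=0$. Secondly, being a root system automorphism, $\sigma$ preserves the bilinear form, hence the radical $A^0$, so $\sigma(R^0)=\sigma(R)\cap\sigma(A^0)=R\cap A^0=R^0$; in particular $\sigma(\delta)\in R^0$. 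Granting that $\eta=\delta-\sigma(\delta)$ lies in $R^0\cup\{0\}$, the proof is then finished: either $\eta=0$, i.e.\ $\sigma(\delta)=\delta$, or else $\eta$ is a nonzero element of $R^0$ killed by $\pi$, contradicting the hypothesis.

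Thus everything rests on the membership $\delta-\sigma(\delta)\in R^0\cup\{0\}$, and this is where tameness and the structure of affine reflection systems enter. The basic mechanism is a reflection/string argument: if $\alpha,\alpha'\in R^\times$ have the same image in $A/A^0$, then $\alpha-\alpha'\in A^0$, so $(\alpha,\alpha)=(\alpha,\alpha')=(\alpha',\alpha')$ and hence $(\alpha,{\alpha'}^\vee)=2$; applying (R3) with $\alpha'$ as the non-isotropic root and $\alpha$ in the role of $\beta$ yields $d-u=2$, so the $\alpha'$-string through $\alpha$ contains $\alpha-\alpha'$, giving $\alpha-\alpha'\in R\cap A^0=R^0\cup\{0\}$. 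By tameness, $\delta$ itself is such a difference $\beta-\gamma$ with $\beta,\gamma\in R^\times$ and $\bar\beta=\bar\gamma$; one then has to arrange that this representation behaves compatibly with $\sigma$, which is done by passing to the anisotropic quotient $\bar R:=R/A^0$ (a locally finite root system on which $\sigma$ acts as an automorphism), analyzing the support sets $\Lambda_{\bar\xi}=\{\mu\in A^0\mid \xi_0+\mu\in R\}\subseteq A^0$ attached to the roots $\bar\xi$ of $\bar R$ (for a fixed preimage $\xi_0$), and using that $A$ is $2$-torsion free; the needed structural facts are in \cite{AYY} and \cite{nehersurvey}. I expect the main obstacle to be precisely this last step, namely establishing that two distinct isotropic roots of a tame affine reflection system always have a difference which is itself a root; once this is in hand, the formal bookkeeping of the previous paragraph completes the argument.
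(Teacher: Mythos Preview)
Your reduction to showing $\eta=\delta-\sigma(\delta)\in R^0$ is the right shape, and the observations $\pi(\eta)=0$ and $\sigma(R^0)=R^0$ are correct. But the step you flag as the ``main obstacle'' --- that the difference of two distinct isotropic roots of a tame affine reflection system is again a root --- is not merely hard, it is \emph{false} in general. Already for extended affine root systems of nullity~$2$, the set $R^0$ can be a proper semilattice in $\Z^2$; for instance $R^0 = 2\Z^2 \cup (e_1+2\Z^2)\cup(e_2+2\Z^2)$ satisfies the structural property $R^0+2\langle R^0\rangle\subseteq R^0$ that one actually has (by \cite[Theorem~1.13]{AYY}), yet $e_1,e_2\in R^0$ while $e_1-e_2\notin R^0$. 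So the route through a general ``difference of isotropic roots'' lemma is a dead end, and the detour via the anisotropic quotient and the support sets $\Lambda_{\bar\xi}$ does not help, since the obstruction lives entirely inside $A^0$.

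The paper sidesteps this with a one-line trick: work with $2\delta$ instead of $\delta$. From $R^0+2\langle R^0\rangle\subseteq R^0$ one gets $2\delta\in R^0$ (take $0\in R^0$ plus $2\delta$), and then
\[
2\delta-2\sigma(\delta)=2\delta+2(-\sigma(\delta))\in R^0+2\langle R^0\rangle\subseteq R^0,
\]
since $-\sigma(\delta)\in R^0$. Now your own computation gives $\pi\bigl(2\delta-2\sigma(\delta)\bigr)=0$, so the hypothesis forces $2(\delta-\sigma(\delta))=0$, and $2$-torsion freeness of $A$ finishes. This is precisely where the $2$-torsion hypothesis earns its keep; in your sketch it appeared only decoratively.
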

\begin{proof}
Since $R$ is tame, it follows from \cite[Theorem 1.13]{AYY} that
\begin{equation}\label{neweq-9}
R^0+2\langle R^0\rangle\sub R^0.
\end{equation}
Now suppose $\delta \in R^0$. Then by (\ref{neweq-9}), $n\delta\in
R^0$ for
  all $n\in\Z$ and so $n\sg(\delta)\in R$ for all $n$. But this can happen only if $\sg(\delta)\in R^0$ \cite[Theorem 1.13]{AYY}.
Now again from (\ref{neweq-9}), we have
  $2\delta-\sigma(2\delta) \in R^0$.
  But $\pi(2\delta-\sigma(2\delta))=0$ and so by assumption $2(\delta-\sigma(\delta))=0$. Now since $A$ is 2-torsion free,
  we get $\sg(\delta)=\delta$ as required.
\end{proof}

\begin{rem}\label{newtemprem2}
{In this remark, we discuss the structure of a commutative
associative predivision $\Lam$-graded algebra $\CA$, $\Lam$ an
abelian group. We refer the reader to \cite[Section
4.5]{nehersurvey} for a more general discussion. As we have
already mentioned, $\supp_\Lambda(\CA)$ is a subgroup of
$\Lambda$, and so without loss of generality we may suppose that
supp$_\Lambda(\CA) = \Lambda$. Suppose $\left\{ u_\lambda \mid
\lambda \in \Lambda \right\}$ is a family of invertible elements
$u_\lambda \in \CA^\lambda$. Put $B := \CA^0$, then
$\CA^\lam=Bu_\lam$ for all $\lam$ and $\{ u_\lambda
\}_{\lam\in\Lam}$ is a free basis for the $B$-module $\CA$ and the
{multiplication on $\CA$} is uniquely determined by
\begin{equation}\label{product}
  u_\lambda u_\mu = \tau(\lambda,\mu) u_{\lambda+\mu} \qquad \mbox{and} \qquad u_\lambda b = b u_\lambda \quad (b \in B),
\end{equation}
  where $\tau : \Lambda \times \Lambda \lra U(B)$ is a function, $U(B)$ being the group of units of $B$. Associativity and commutativity
of $\CA$ leads to
\begin{equation}\label{tau}
\tau (\lambda,\mu) \tau(\lambda+\mu,\nu) = \tau(\mu,\nu)
\tau(\lambda,\mu+\nu), \qquad \tau(\lambda,\mu) =
\tau(\mu,\lambda),
\end{equation}
  for $\lambda,\mu,\nu \in \Lambda$. In other words,
  $\tau:\Lam\times\Lam\longrightarrow U(B)$ is a {\it symmetric
  $2$-cocycle}.
  Conversely, given any unital commutative associative $\F$-algebra $B$ and a symmetric $2$-cocycle $\tau:\Lam\times\Lam\longrightarrow U(B)$,
one can define a commutative associative predivision
$\Lam$-graded $\F$-algebra by (\ref{product}). To be more
precise, let $\CA$ be the free $B$-module with basis
$\{u_\lam\}_{\lam\in\Lam}$, namely
$\CA:=\bigoplus_{\lam\in\Lam}Bu_\lam$. Then, identifying $B$
with $Bu_0$ through $b\longmapsto b\tau(0,0)^{-1}u_0$, $b\in B$, and using
(\ref{product}) as the multiplication rule on $\CA$, we get the
desired algebra.} A commutative associative algebra arising in
this way is called a {\it twisted group algebra}
  and is denoted by $B^t[\Lambda]$.
  To summarize, any commutative associative predivision graded algebra $\CA$ with support $\Lambda$ is
  graded isomorphic to a twisted group algebra $B^t[\Lambda]$.
  {It follows that},
  $\CA$ is division graded if and only if $B$ is a field{,} and is an associative $\Lam$-torus if and
  only if $B=\F$.
\end{rem}

\section{\bf Examples}\setcounter{equation}{0}\label{examples}
In this  section, we illustrate extended affinization through some
examples. In the first example, using extended affinization
process,  we construct  a generalization of the class of
\textit{toroidal} Lie algebras. In the second example, starting
from a certain IARA, we show that we can iterate extended
affinization process to get a series of IARA's. Finally, in the
last example, we apply extended affinization  starting from an
IARA of type $A$ and ending up with an IARA of type $BC.$ Before
going to the main body of this section, we make a convention that
in each example,
\begin{equation}\tag{{\Large$\star$}}\label{B}
\parbox{4in}{ $B$ is a unital
associative algebra over $k$
  admitting an invariant nondegenerate  symmetric bilinear form $\epsilon$ such that $\epsilon(1,1)\neq
  0$.}
  \end{equation}

\begin{ex}\label{firstex}
Suppose $(\fg, T)$ is an IARA with corresponding root system $R$
and bilinear form $\form$. Assume $B$ is commutative and let
$\Lambda$ be a torsion free abelian group. Consider the twisted
group algebra $B^t[\Lambda]: =\bigoplus_{\lambda\in
\Lambda}Bz^\lambda$ and recall that we have $bz^\lambda
cz^\mu=bc\tau(\lam,\mu)z^{\lambda+\mu}$, $z^\lambda b =
bz^\lambda$ for $b,c\in B,\lambda,\mu\in \Lambda$ where $\tau:\Lam
\times \Lam:\lra U(B)$ is a symmetric $2$-cocycle. We extend
$\epsilon$ to {$B^t[\Lam]$} by linear extension of
\begin{equation}\label{epsilon}
 \epsilon(bz^\lambda,cz^\mu):=\left\{
  \begin{array}{ll}
    \epsilon(b,c)& \lambda+\mu=0\\
    0&\lambda+\mu\neq0.
  \end{array}
  \right.
\end{equation}
{Set $B^t[\Lam]^\lam:=Bz^\lam$, $\lam\in\Lam$.} Then by
Remark~\ref{newtemprem2}, {$B^t[\Lam]$} is a commutative
associative predivision $\Lam$-graded algebra over $\F$ and one
can easily verify that
  $\epsilon$ is a $\Lambda$-graded invariant nondegenerate symmetric  bilinear form on {$B^t[\Lam]$}.

Define $\hat\fg$ and $\hat T$ as in (\ref{hatfg}) with  $\rho=0.$
Namely $$\hat\fg=\widehat{L(\fg,B^t[\Lam])}=(\fg\otimes
B^t[\Lam])\oplus\CV\oplus\CV^\dag\andd\hat T=(T\otimes
1)\oplus\CV\oplus\CV^\dag,$$ with corresponding Lie bracket and
bilinear form defined by (\ref{bracketloop}) and (\ref{formloop}),
respectively. Then by Corollary \ref{newtempcor1}, $(\hat\fg,\hat
T)$ is an IARA with root system $\hat R=R\oplus\Lam$. We note that
this structure in fact generalizes the well known structure of
{\it toroidal Lie algebras}.
\end{ex}

\begin{ex}\label{newtempexa2}
We continue with the same notations as in  Example \ref{firstex},
in particular $\hat\fg=\widehat{L(\fg,B[\Lam])}.$ Set
$\CA:=B[\Lam]$ and suppose $\sigma\in \Aut(\fg)$ satisfies axioms
(A1)-(A4).
  Let $\mu:\Lambda\lra\Z$ be a group homomorphism. The map $\mu$ induces an automorphism of $\CA,$ denoted again by $\mu,$ defined by   $\mu(x):=\zeta^{\mu(\lambda)}x$ for any $x \in \CA^\lambda$, where $\zeta$ is a primitive $m$-th root of unity.
  Both $\sigma$ and $\mu$ can be considered as automorphisms of $\hat\fg$ by
$$\begin{array}{c}
    \sigma=\sg\otimes\id\hbox{ on }\fg\otimes\CA\hbox{ and }\sg=\id\hbox{ on }\CV\oplus\CV^\dag,\\
\mu=\id\otimes\mu\hbox{ on }\fg\otimes\CA\hbox{ and }\mu=\id\hbox{
on }\CV\oplus\CV^\dag.
\end{array}
$$
Set $\hat\sigma:=\sigma \mu \in \Aut(\hat\fg)$. We claim that
$\hat\sigma$ satisfies (A1)-(A4). Since $\sigma$ and $\mu$ commute
and both are of period $m$ over $\hat\fg$, (A1) holds. Also  (A2)
holds since $\sigma$ and $\mu$ stabilize $T\otimes1$, $\CV$ as
well as  $\CV^\dag,$ and  (A3) holds since $\sigma$ preserves the
form $\form$ on $\fg$ and $\mu$ preserves the form  $\epsilon$ on
{$\CA$}. For (A4), first note that
  \[ {\hat\fg}^0 =
  \big( \sum_{\lambda\in\Lambda}{\fg}^{-\overline{\mu(\lambda)}}
  \otimes \CA^\lambda\big) \oplus \CV\oplus\CV^\dag
  \quad\mbox{and}\quad \hat{T}^0=(T^0\otimes 1) \oplus \CV\oplus\CV^\dag.\]
Also
  \[ C_{\hat\fg}({\hat T}^0)=(C_{\fg}(T^0) \otimes \CA^0) \oplus \CV\oplus\CV^\dag, \]
  so
  \[C_{{\hat\fg}^0}(\hat{T}^0)=(C_{\fg^0}(T^0)\otimes \CA^0)\oplus \CV\oplus\CV^\dag.\]
  Since $\sigma$ satisfies (A4), $C_{\fg^0}(T^0)\subseteq\fg_0$. Thus
  \[ C_{{\hat\fg}^0}(\hat{T}^0)\subseteq (\fg_0\otimes \CA^0)\oplus \CV\oplus\CV^\dag ={\hat\fg}_0\]
  and (A4) holds for $\hat\sigma$.

{We now further assume that $\fg$ is division, $\fg_0$ is abelian
and $B=\F$. Then by Remark~\ref{newtemprem2} and Corollary
\ref{division case}, $(\hat\fg,\hat T)$ is a division IARA. In
addition, $\hat\fg_0=(\fg_0\otimes B) \oplus\CV \oplus \CV^\dag$
is abelian.} Therefore $(\hat\fg,\hat T)$ and $\hat\sg$ satisfy
conditions of Theorem \ref{Lrhofgisiara}, with $\hat\fg$, $\hat T$
and $\hat\sg$ in place of $\fg$, $T$ and $\sg$, respectively. Now let $\Lam'$ be a torsion-free abelian group, $\rho':\Lam' \lra \Z_m$
a group epimorphism and $\CA'$ a suitable $\Lam'$-graded commutative associative algebra. Then starting from $(\hat\fg,\hat T)$ and $\hat\sigma$, one can use
Theorem \ref{Lrhofgisiara} to construct a new IARA
$\widehat{L_{\rho'}(\hat\fg,\CA')}$. This process can be iterated using suitable inputs.
\end{ex}

\begin{ex}
  Suppose $J$ is a nonempty  index set, with a fixed total ordering, and $\mathbf{q}=(q_{ij})$ is a $J\times J$ matrix over $k$
  such that $q_{ij}=\pm1$, $q_{ji}=q_{ij}$ and $q_{ii}=1$, for
  all $i,j \in {J}$. We recall  that  $B$ and $\epsilon$
are as in (\ref{B}). Let $\CA
:=B_{\mathbf{q}}[z_j^{\pm1}]_{j\in J}$ be the unital associative
algebra
  generated by
  $\{ z_j, z_j^{-1}, b \mid  j\in J, b\in B\}$ subject to the relations
\begin{equation}\label{relations}
  z_jz_j^{-1}=z_j^{-1}z_j=1,\quad z_iz_j=q_{ij}z_jz_i \quad\mbox{and}\quad z_ib=bz_i,\quad (i,j \in J,\; b\in B).
\end{equation}
Take $\Lam:=\Z^{|J|}$ and for $\lam=(\lam_j)_{j\in J}\in\Lam.$
Set $z^\lam:=\Pi_{j\in J} z_j^{\lambda_j}\in\CA$, where product
makes sense with respect to the total ordering on $J$. Then
$\CA$ is a predivision $\Lambda$-graded associative algebra
with $\CA^\lambda=B z^\lambda$ for each $\lambda\in\Lambda$.
Moreover, a similar argument as in \cite[Proposition 2.44]{bgk}
shows that \begin{equation}\label{center} \CA=[\CA,\CA] \oplus
Z(\CA).
\end{equation}

Let $K$ be a nonempty index set and denote by  $K^\pm$, the set
$K\uplus \{0\} \uplus (-K)$ where $-K$ is a copy of $K$ whose
elements are denoted by $-k, k\in K$.  Let $\CK$ be the Lie
subalgebra $\mathfrak{sl}_{K^\pm}(\CA)$ of all finitary
$K^\pm\times K^\pm$ matrices over $\CA$ generated by the
elementary matrices $ae_{ij}$, $ i\neq j\in K^\pm$, $a\in \CA$
(for details the reader is referred to \cite[Section 7]{nehersurvey}).
One knows that there is a unique $\Lam$-grading on $\CK$ such that
for each $ i \neq j \in K^\pm$ and $a\in \CA^\lambda,$
 $ae_{ij}\in\CK^\lambda.$

  One can extend $\epsilon$ {from $B$ to $\CA$ as in} (\ref{epsilon}), then we can define a
  $\Lam$-graded invariant  nondegenerate symmetric  bilinear form on
  the set of finitary $K^\pm\times K^\pm$-matrices
  by linear extension of
  $$(ae_{ij},be_{ks})_\CK:=\delta_{i,s}\delta_{j,k}\epsilon(a,b)\quad\hbox{for}\quad a,b\in \CA,  i,j,k,s \in
  K^\pm.$$
By  \cite[Section~7.10]{nehersurvey} the restriction of this form
to $\CK$ is nondegenerate if and only if $Z(\CK)=\{0\}$. Also by
\cite[Section 7.4]{nehersurvey}, $Z(\CK)=\{0\}$ if $|K|=\infty$,
and
\begin{equation}\label{zck}
  Z(\CK)= \{ z I_{2n+1} \mid z \in Z(\CA), (2n+1) z \in [\CA,\CA]\},
  \end{equation}
  if $|K|=n<\infty$,
   where by $I_{2n+1}$ we mean the
identity $(2n+1) \times (2n+1)$ matrix. Therefor by
(\ref{center}), $Z(\CK)=\{0\}$ in this case too. So the
restriction of the form to $\CK$ is $\Lam$-graded and
nondegenerate.

Suppose $\dot T:=\hbox{span}_\F\{e_{ii}-e_{jj} \mid i\neq j \in
K^\pm\}$. For $i\in K^\pm$, define
 $\varepsilon_i\in \dot{T}^{^\star}$ by $\ve_i(e_{jj}-e_{kk}):=\delta_{ij}-\delta_{ik}$, for $j\not=k\in K^\pm$.
 For $i,j\in K^\pm$, put $\dot\alpha_{ij}: = \ve_i-\ve_j$ and
 $\dot R :=\{ \dot\alpha_{ij} \mid i,j \in K^\pm\}$. For $\dot\a\in\dot R$, set
 $\CK_{\dot\a}:=\{x\in\CK\mid [t,x]=\dot\a(t)x\hbox{ for all }t\in\dot T\}.$ Then
  $\CK=\bigoplus_{\dot\a\in \dot R} \CK_{\dot\a}$. We note that $\CK_0$  consists of diagonal elements of $\CK$.
  In addition, if we assume that $\ep(1,1)=1$, then for any $i,j,s,k \in \CK$ with $i\neq j$ and $s\neq k$,
we have {\begin{eqnarray*}
  (e_{ii}-e_{jj},  e_{kk} - e_{ss})_\CK&=& \delta_{ik}-\delta_{is}-(\delta_{jk}-\delta_{js})\\
                                   &=& \epsilon_i(e_{kk}-e_{ss})-\epsilon_j(e_{kk}-e_{ss})\\
                                   &=& \dot\alpha_{ij}(e_{kk}-e_{ss}).
\end{eqnarray*}}
  {Thus $t_{{\dot\a}_{ij}}:=e_{ii}-e_{jj}$ is the unique element in $\dot T$ representing ${\dot\alpha}_{ij}$ via
  $\form_\CK$.}

Next, consider the $\F$-vector space  $\CV:=\F\otimes_\Z\Lam,$
identify $\Lam$ as  a subset of $\CV$ and fix a basis
$\{\lam_j\mid j\in J\}$ for $\CV.$ Define  the vector space
$\CV^\dag:=\sum_{j\in J}\F d_j\subseteq {\CV}^{\star}$ as in
(\ref{newtempeq18}). Set
 \[\fg:=\CK\oplus\CV\oplus\CV^\dag \andd T:=\dot T\oplus\CV\oplus \CV^\dag.\]

Define the Lie bracket on $\fg$ as in (\ref{bracketloop}), and
extend the form $\form_\CK$ on $\CK$ to a form $\form$ on $\fg$ as
in (\ref{formloop}). Then it is clear that $\form$ is
nondegenerate both on $\fg$ and $T$.

We note that each $\dot\alpha \in \dot R$ can be considered as an
element of $T^{^\star}$ by requiring
$\dot\alpha(\CV)=\dot\alpha(\CV^\dag):=\{0\}$.
 One can easily see that $t_{\dot\alpha}$ represents
  $\dot \alpha$ via $\form$ for each  $\dot\alpha \in \dot R$.
  Also we can consider any $\lambda\in \Lambda$ as an element of ${ T}^{^\star}$ by
 $\lambda(\dot T)=\lambda(\CV):=\{0\}$ and $\lambda(d):=d(\lambda)$ for any $d\in\CV^\dag$. Then clearly
 $t_{\lambda}=\lambda.$ If for $\a\in T^{^\star}$
  we define $\fg_\a$ in the usual manner, then
it is  easy to verify that for any $\lambda \in \Lambda$,
\begin{equation}\label{rootspacesslinfty}
\begin{array}{c}
  \fg_{\dot\alpha_{ij}+\lambda}=\CA^\lambda e_{ij},\quad (\dot\alpha_{ij}\neq 0),\vspace{2mm}\\
  \fg_\lambda = \; \mbox{the set of  diagonal matrices in}\;\CK \;\mbox{with enteries from} \;\CA^\lambda, {(\lam \neq 0),}\vspace{2mm}\\
{  \fg_0 = \; (\mbox{the set of diagonal matrices in}\;\CK
\;\mbox{with enteries from} \;\CA^0) \oplus \CV \oplus \CV^\dag.}

\end{array}
\end{equation}
  So $\fg=\bigoplus_{\dot\alpha \in \dot R, \lambda \in \Lambda} \fg_{\dot\alpha + \lambda}$.
  Therefore $(\fg,T)$ is a toral pair with root system
  \[
  R=\dot R+\Lam, \]
 and (IA1) holds for $\fg$.
{We next show that (IA2) holds.} Fix $\lam\in\Lam$ and choose an
invertible element $a \in \CA^\lambda,$ {then for $i\not= j$ we
have}
\begin{equation}\label{ex2 IA2}
\begin{array}{c}
 [a e_{ij}, a^{-1} e_{ji}]= e_{ii}-e_{jj} + \sum_{s\in J}([d_s,ae_{ij}],a^{-1}e_{ji})\lam_s \in T\vspace{2mm}\\

 [a(e_{ii}-e_{jj}),a^{-1}(e_{ii}-e_{jj})]
 =2\sum_{s\in J} d_s(\lambda)\ep(a,a^{-1}) \lam_s \in T,
\end{array}
\end{equation}
where {the first equality is always nonzero} and the second
{equality} is nonzero if $\lam\not=0$. Note that if $i\not=j$,
then $a^{\pm1}e_{ij}\in\fg_{{\dot\a}_{ij}\pm\lam}$, and
$a^{\pm1}(e_{ii}-e_{jj})\in\fg_{\pm\lam}$. So (IA2) holds.
Finally, since $\dot R$ is a \lfrs of type $\dot A_{K^\pm}$, (IA3)
holds by a similar argument as in the proof of
Theorem~\ref{gt0isiara}. Consequently, $(\fg,T)$ is an IARA.

{We further show that $\fg$ is division if and only if $\CA$ is
division graded. Using the fact that the elements of  $\dot T$ are
diagonal matrices with trace zero, it is not difficult to see that
if $\fg$ is division, then $\CA$ is division graded. Assume now
that $\CA$ is division graded. We must show that (IA2)$'$ holds
for $\fg$.} {Let $\lam \in \Lam$, $0 \neq a \in \CA^\lam$ and $i
\neq j \in K^\pm$, then
\[
[a e_{ij}, a^{-1} e_{ji}]= e_{ii}-e_{jj} + \sum_{s\in
I}([d_s,ae_{ij}],a^{-1}e_{ji})\lam_s \in
T\setminus\{0\}\vspace{2mm}
\]
as required. Also if $\sum_{i \in K^\pm_0} {a_i} e_{ii} \in
\fg_\lam,$ for a finite subset $K^\pm_0$ of $K^\pm$, where $0 \neq
a_i \in \CA^\lam$ and $\lam \neq 0$}, {then} {\begin{align*}
[\sum_{i \in K^\pm_0} a_ie_{ii}, \sum_{i \in K^\pm_0} a_i^{-1}
e_{ii} ] &= \sum_{s \in J} d_s(\lam)
                                        (\sum_{i \in K^\pm_0} \epsilon(a_i,a_i^{-1})) \lam_s\\
                                        &= \sum_{s \in J} d_s(\lam) |K^\pm_0| \lam_s \in
                                        T\setminus\{0\}
\end{align*}
as required.} Therefore $\fg$ is division if and only if $\CA$ is
division graded.
Indeed by \cite[Section 4.5]{nehersurvey}, $\fg$ is division if
and only if {$B$ } is division. So from now on, {\it we assume
that $B$ is division.}

There exists an involution $\bar{\;}$ (a self-inverting
anti-automorphism) on $\CA$ (see \cite[Section 2]{ag}) such that
$\bar{z_j}=z_j,$ for any $ j \in J$ and $\bar b=b$ for all $b\in
B$. By definition, it is clear that $\epsilon(\bar a,\bar
b)=\ep(a,b)$ for any $a,b\in \CA$. Using the involution
$\bar{\;},$ we can define an involution $\;^\ast$ on $\CK$ by
{$(ae_{ij})^\ast= \overline{a}e_{-j,-i}$}. It is straightforward
to see that the linear map $\sigma:\fg\lra\fg$ defined by
$$\sigma(x)=-x^\ast \;\mbox{for}\; x\in \CK\andd \sigma(x)=x \hbox{ for } x\in \CV\oplus\CV^\dag,
$$
is a Lie algebra automorphism.

We will show that $\sigma$ satisfies (A1)-(A5). Clearly
$\sigma^2(x)=x$ for any $x\in \fg$, thus $\sigma$ satisfies (A1)
with $m=2$. Also it is clear from definition that $\sigma$
satisfies (A2). In addition, observe that
\begin{eqnarray*}
  (\sigma(ae_{ij}),\sigma(be_{ks}))&=&((ae_{ij})^\ast,(be_{ks})^\ast)\\
                                   &=&(\bar ae_{-j,-i},\bar be_{-s,-k})\\
                                   &=&\delta_{jk}\delta_{is}\ep(\bar a,\bar b)\\
                                   &=&\delta_{jk}\delta_{is}\ep( a, b)\\
                                   &=&(ae_{ij},be_{ks}).
\end{eqnarray*}
  So (A3) holds for $\sigma$. Since $m=2$ is prime, instead of (A4) we will show that $\sigma$ satisfies the equivalent condition (A4)$''$
  (see Lemma \ref{newtemplem9}) namely, we show that for $0\not=\a\in R$, $\pi(\a)\not=0$.
  Recall from Section \ref{gradings} that since $\sg$ satisfies (A2),
  it induces an automorphism on $T^\star$,
  denoted again by $\sg$. We now note that $^\ast$ maps diagonal matrices to diagonal
  matrices and $\bar{\;}$ preserves homogeneous subspaces of $\CA$. Thus by (\ref{rootspacesslinfty})
  for any $\lambda \in \Lambda$, $\sigma(\fg_\lambda)=\fg_\lambda$, implying $\sigma(\lambda)=\lambda$.
  Consequently, $\pi(\lambda)=\lambda$ and so if $\lambda\neq 0, $ then so is $\pi(\lambda)$.
  On the other hand, we have $\sigma^{-1}=\sigma$ so for any $t\in T$ and $i\neq j \in K^\pm$,
\begin{eqnarray*}
  \pi(\dot\a_{ij})(t)&=& \frac{1}{2}(\dot\a_{ij}+\sigma(\dot\a_{ij})(t))\\
        &=& \frac{1}{2} (\dot\a_{ij}(t)+\dot\a_{ij}(\sg(t)))\\
        &=& \frac{1}{2} (\dot\a_{ij}(t+\sigma (t))).
\end{eqnarray*}
  Using this, we see that for $i \neq j \in K^\pm$ and  $t:=e_{ii}-e_{jj},$
\rred{\[
\pi(\dot\a_{ij})(t)= \left\{
\begin{array}{ll}
  1 & -j \neq i \;\mbox{and}\; i,j \neq 0\\
  \frac{1}{2} & -j \neq i , i=0 \;\mbox{or}\; j =0\\
  2 & -j=i.
\end{array}
\right.
\]}
   Consequently, $\pa\neq 0$ for any $0 \neq \alpha \in R${. I}n particular (A4)$''$ holds.

{We next show that (A5) holds.  Let $i \neq j \in K \cup \{0\}$,
then we have
\begin{eqnarray*}
 0 \neq e_{ii}-e_{-i-i} \in T^{0}\andd 0 \neq e_{ii}-e_{jj} + e_{-i-i} - e_{-j-j} \in T^{\bar 1}.
\end{eqnarray*}
In particular (A5) holds. Therefore $(\fg, T)$ and $\sigma$
satisfy all requirements of Theorem~\ref{Lrhofgisiara} and so we
can  construct a new IARA $(\hat\fg, \hat T)$.}

{Note that by (\ref{rootspacesslinfty}),
$$\fg_0=(\hbox{the set of diagonal matrices in $\CK$ with entries from
$\CA^0$})\oplus\CV\oplus\CV^0.$$ So, $\fg_0$ is abelian if and
only if $\CA^0 = B$ is abelian, indeed, if and only if $B$ is a
field.}

Now that we have a suitable automorphism on $\fg$, choosing a torsion-free abelian
group $\Lam'$, a group epimorphism $\rho:\Lam' \lra \Z_2$ and a predivision
$\Lam'$-graded commutative associative algebra $\CA'$, we can use Theorem~\ref{Lrhofgisiara}
to construct another IARA, $\hat\fg$ with a root system $\hat R$.

\rred{It is now interesting to have a discussion on the type of
$\hat\fg$. Note that we have
\[
R = \{ \epsilon_i  - \epsilon_j + \lam \mid i \neq j \in K^\pm, \lam \in \Lam \}.
\]
By definition of $\sg$ one can easily check that for any $i \in
K$, $\sg(\epsilon_i)=-\epsilon_{-i}$, and as we have already
seen $\sg(\lam)=\lam$ for any $\lam \in \Lam$. Therefore
\begin{align*}
\pi(R)  = &\; \{ \frac{1}{2}(\epsilon_i - \epsilon_j  +\epsilon_{-j} - \epsilon_{-i}) + \lam \mid i \neq j \in K^\pm, \lam \in \Lam\}\\
= &\;  \{ \pm \frac{1}{2}(\epsilon_i - \epsilon_{-i}) + \lam\mid i \in K, \lam \in \Lam\}\\
& \cup\; \{ \pm\frac{1}{2}((\epsilon_i - \epsilon_{-i}) \pm (\epsilon_j  -\epsilon_{-j} )) + \lam \mid i \neq j \in K, \lam \in \Lam\}\\
&\cup\; \{ \pm (\epsilon_i-\epsilon_{-i}) + \lam \mid i \in K, \lam \in \Lam \}
\end{align*}
This makes it clear that $\pi(R)$ is an affine reflection
system of type $BC$. But by (\ref{newtempeq11}), $\hat{R}$ and
$\pi(R)$ have the same type. Thus $\hat\fg$ is an IARA of type
$BC$.}
\end{ex}

\newcommand{\etalchar}[1]{$^{#1}$}
\providecommand{\bysame}{\leavevmode\hbox
to3em{\hrulefill}\thinspace}
\providecommand{\MR}{\relax\ifhmode\unskip\space\fi MR }
\providecommand{\MRhref}[2]{%
  \href{http://www.ams.org/mathscinet-getitem?mr=#1}{#2}
} \providecommand{\href}[2]{#2}

\end{document}